\documentclass{elsarticle}
\biboptions{numbers,sort&compress}
\usepackage{hyperref,amssymb,amsmath,amsfonts,booktabs,multirow,algorithm}

\newtheorem{theorem}{Theorem}[section]
\newtheorem{lemma}[theorem]{Lemma}

\newtheorem{example}[theorem]{Example}
\newtheorem{corollary}[theorem]{Corollary}
\newcommand{\ind}{\textup{ind}}

\newdefinition{remark}{Remark}
\newproof{proof}{Proof}
\numberwithin{equation}{section}

\journal{*************}

\def\sbmatrix{\left[\begin{array}}
\def\endsbmatrix{\end{array}\right]}

\begin{document}

\begin{frontmatter}

\title{Formulae for the Drazin inverse of Modified Tensors via the Einstein Product\tnoteref{mytitlenote}}
\tnotetext[mytitlenote]{The second author is supported by the National Natural Science Foundation of China (NSFC) (No. 11901079), and China Postdoctoral Science Foundation (No. 2021M700751), and the Scientific and Technological Research Program Foundation of Jilin Province (No. JJKH20190690KJ; No. JJKH20220091KJ; No. JJKH20250851KJ).
The third author is supported by the Ministry of Education, Science and Technological Development, Republic of Serbia (No. 451-03-65/2024-03/200124).}

\author{Yue Zhao}
\ead{yuezhao0303@163.com}
\address{Department of Math \& Stat, Georgia State University, Atlanta, GA 30303, USA.}

\author{Daochang Zhang}
\ead{daochangzhang@126.com}
\address{College of Sciences, Northeast Electric Power University, Jilin, P.R. China.}

\author{Dijana Mosi\' c}
\ead{dijana@pmf.ni.ac.rs}
\address{Faculty of Sciences and Mathematics, University of Ni\v s, P.O.
Box 224, 18000 Ni\v s, Serbia.}

\begin{abstract}
This paper establishes exact expressions for the Drazin inverse of the modified tensor $\mathcal A-\mathcal C*_N\mathcal D^D*_N\mathcal B$ via the Einstein product, formulated using the Drazin inverse of $\mathcal A$ and the generalized Schur complement $\mathcal D-\mathcal B*_N\mathcal A^{D}*_N\mathcal C$, providing a comprehensive generalization and unification of existing results in the literature for the case when the tensors are of order two. Furthermore, the findings reduce to the classical Sherman-Morrison-Woodbury formula in the special case of second-order tensors. Finally, we give an example to illustrate our new explicit expression.
\end{abstract}

\begin{keyword}
Drazin inverse\sep Modified Tensor\sep Einstein Product\sep Sherman-Morrison-Woodbury formula
\MSC[2010] 15A09; 39B42; 53A45; 65F20
\end{keyword}
\end{frontmatter}

\section{Introduction}
Tensors are higher-dimensional extensions of matrices. High-order tensors are extensively applied in analytical mechanics, data mining, neural networks, computer vision, chemometrics, and signal processing \cite{EL200797, BDML199093, Wei(2020), KTBB200999, CP200194, SRCL2012}.

For positive integers $N$ and $I_1,...,I_N$, let $[N]=\{1,...,N\}$ and $\mathbb{C}^{I_1\times \cdot\cdot\cdot \times I_N}$ represents the set of order-$N$ tensors with dimensions $I_1\times \cdot\cdot\cdot \times I_N$ over the complex field \cite{WDYW2016}. A tensor
$\mathcal E=(e_{i_1,...,i_Ni_1,...,i_N})\in\mathbb{C}^{I_1\times \cdot\cdot\cdot \times I_N\times I_1\times \cdot\cdot\cdot \times I_N}$ is called a diagonal tensor if all off-diagonal elements are zero and the diagonal elements are nonzero. When the diagonal elements are equal to $1$, $\mathcal E$ is called the unit tensor, denoted by $\mathcal I$. The zero tensor, denoted by 0, is the tensor where all entries are zero.

The Einstein product of two tensors is defined in \cite{AE2007} by the operation $*_N$ as follows:
\begin{equation*}
(\mathcal A*_N\mathcal B)_{i_1\cdot\cdot\cdot i_Nj_1\cdot\cdot\cdot j_M}=\sum_{h_1\cdot\cdot\cdot h_{N^\prime}}a_{i_1\cdot\cdot\cdot i_Nh_1\cdot\cdot\cdot h_{N^\prime}}b_{h_1\cdot\cdot\cdot h_{N^\prime}j_1\cdot\cdot\cdot j_M},
\end{equation*}
where $\mathcal A\in\mathbb{C}^{I_1\times \cdot\cdot\cdot \times I_N\times H_1\times \cdot\cdot\cdot \times H_{N^\prime}}$ and
$\mathcal B\in\mathbb{C}^{H_1\times \cdot\cdot\cdot \times H_{N^\prime}\times J_1\times \cdot\cdot\cdot \times J_M}$. This tensor product satisfies the associative law and $\mathcal A*_N\mathcal B\in\mathbb{C}^{I_1\times\cdot\cdot\cdot\times I_N\times J_1\times\cdot\cdot\cdot\times J_M}$.

A tensor $\mathcal A\in\mathbb{C}^{I_1\times \cdot\cdot\cdot \times I_N\times I_1\times \cdot\cdot\cdot \times I_N}$ is termed invertible if there exists a tensor $\mathcal F\in\mathbb{C}^{I_1\times \cdot\cdot\cdot \times I_N\times I_1\times \cdot\cdot\cdot \times I_N}$ such that $\mathcal A*_N\mathcal F=\mathcal F*_N\mathcal A=\mathcal I$. In this case, $\mathcal F$ is called the inverse of $\mathcal A$, denoted as $\mathcal A^{-1}=\mathcal F$.
Now, we state the definition of the Drazin inverse of tensors below \cite{JWtensor2018}.

Furthermore, for $\mathcal A\in\mathbb{C}^{I_1\times \cdot\cdot\cdot \times I_N\times J_1\times \cdot\cdot\cdot \times J_M}$, the range and the null space of $\mathcal A$ are denoted by $\mathbb{R}(\mathcal A)$ and $\mathbb{N}(\mathcal A)$, respectively, and are defined as follows:
$$\mathbb{R}(\mathcal A)=\{\mathcal A*_N\mathcal X: \mathcal X\in\mathbb{C}^{J_1\times \cdot\cdot\cdot \times J_M}\},$$
$$\mathbb{N}(\mathcal A)=\{\mathcal X\in\mathbb{C}^{J_1\times \cdot\cdot\cdot \times J_M}: \mathcal A*_N\mathcal X=0\}.$$

Let $\mathcal A\in\mathbb{C}^{I_1\times \cdot\cdot\cdot \times I_N\times I_1\times \cdot\cdot\cdot \times I_N}$. The smallest non-negative integer $l$ such that $\mathbb{R}(\mathcal A^l)=\mathbb{R}(\mathcal A^{l+1})$ is referred to as the index of $\mathcal A$, denoted by $\ind(\mathcal A)$. For a square tensor $\mathcal A$, the $l$-th power of $\mathcal A$ is denoted by $\mathcal A^l$, with $\mathcal A^0=\mathcal I$. In addition, $\mathcal A$ is called $l$-nilpotent when $\mathcal A^l=0$ for positive integer $l$.
A tensor $\mathcal X\in\mathbb{C}^{I_1\times \cdot\cdot\cdot \times I_N\times I_1\times \cdot\cdot\cdot \times I_N}$ that satisfies the following equations:
\begin{eqnarray*}
\mathcal A^l*_N\mathcal X*_N\mathcal A=\mathcal A^l,~\mathcal X*_N\mathcal A*_N\mathcal X=\mathcal X,~\mathcal A*_N\mathcal X=\mathcal X*_N\mathcal A,
\end{eqnarray*}
is defined as the Drazin inverse of $\mathcal A$, denoted by $\mathcal X=\mathcal A^D$, where $l\geq\ind(\mathcal A)$.
When $\ind(\mathcal A)=1$, the Drazin inverse is termed as the group inverse, denoted by $\mathcal A^{\sharp}$. If $\ind(\mathcal A)=0$, then $\mathcal A^D=A^{-1}$. It holds that $(\mathcal A^D)^k=(\mathcal A^k)^D$ for any nonnegative integer $k$. For tensors of order two, the above generalized inverse is the Drazin inverse of matrices \cite{ABTG1974,daochangjcam2025}. Meanwhile, the Einstein product of the block tensors is similar to the product of block matrices (see \cite{LSB2021}). We define the sum to be $0$, if the lower limit of a sum is greater than its upper limit.

When the tensors are of order two, the Sherman-Morrison-Woodbury formula is given as
$$
  (\mathcal A-\mathcal C*_N\mathcal D^{-1}*_N\mathcal B)^{-1}=\mathcal A^{-1}+\mathcal A^{-1}*_N\mathcal C*_N(\mathcal D-\mathcal B*_N\mathcal A^{-1}*_N\mathcal C)^{-1}*_N\mathcal B*_N\mathcal A^{-1},
$$
where $\mathcal A$, $\mathcal D$, and $\mathcal D-\mathcal B*_N\mathcal A^{-1}*_N\mathcal C$ are invertible (and so $\mathcal A-\mathcal C*_N\mathcal D^{-1}*_N\mathcal B$) (see \cite{Sherman(1950),Woodbury(1950)}). The tensor $\mathcal A-\mathcal C*_N\mathcal D^{-1}*_N\mathcal B$ is termed as the modified tensor of $\mathcal A$, when $\mathcal D$ is an unit tensor; while $\mathcal D-\mathcal B*_N\mathcal A^{-1}*_N\mathcal C$ is defined as the Schur complement of $\mathcal A$. A key challenge in this field lies in deriving formulae for the Drazin inverse of a modified tensor in terms of the Drazin inverse of the original tensor. Such formulae are highly relevant to applications in which a tensor is represented as the sum of a structured tensor and a perturbation, including updating problems \cite{Hager Updating(1989)}. These inverse modification formulae have been extensively studied and find applications in structural analysis, statistics, numerical analysis, network theory, and partial differential equations (see \cite{Harte(1988),Horn Johnson(1985)}).

In particular, the generalized inverses of tensors have been extensively studied in the literature. In 2014, Li and Ng \cite{Ng(2014)} studied the perturbation bound for the spectral radius of an $m$th-order $n$-dimensional nonnegative tensor. Two years later, Che et al. \cite{CheWei(2016)} investigated the perturbation bounds of the tensor eigenvalue and singular value problems of even order. In 2018, Ji and Wei \cite{JWtensor2018}  proposed a method for computing the Drazin inverse of even-order tensors using the Einstein product, with applications to solving singular tensor equations. Subsequent studies further explored the general properties of the Drazin inverse of tensors under the Einstein product. For example, in 2021, Liu et al. \cite{LSB2021} derived expressions for the Drazin inverse of the sum of two tensors via the Einstein product.  The extensive body of work on generalized inverses of tensors and their applications \cite{BNS,RB32017,LAJNEF2022,HJMB2017,MLBZ2019,LSBZCB2016,LWD2020JCAM,
WXD2021LAMA} has motivated our investigation into the Drazin inverse of modified tensors.

Throughout this paper, let
\begin{align*}
&\mathcal A\in\mathbb{C}^{I_1\times \cdot\cdot\cdot \times I_N\times I_1\times \cdot\cdot\cdot \times I_N},~\mathcal D\in\mathbb{C}^{H_1\times \cdot\cdot\cdot \times H_M\times H_1\times \cdot\cdot\cdot \times H_M},\\~&\mathcal B\in\mathbb{C}^{H_1\times \cdot\cdot\cdot \times H_M\times I_1\times \cdot\cdot\cdot \times I_N},~\mathcal C\in\mathbb{C}^{I_1\times \cdot\cdot\cdot \times I_N\times H_1\times \cdot\cdot\cdot \times H_M}.&
\end{align*}
For convenience, we adopt the following symbolic notations:
\begin{align*}
&\mathcal S_{\mathcal A^e}=\mathcal A^e*_N\mathcal S*_N\mathcal A^e,~\mathcal Z_{\mathcal D^e}=\mathcal D^e*_N\mathcal Z*_N\mathcal D^e,~\mathcal Y=\mathcal C*_N\mathcal D^D*_N\mathcal B,~\mathcal R=\mathcal B*_N\mathcal A^D*_N\mathcal C,\\&\mathcal H=\mathcal B*_N\mathcal A^{D},~\mathcal K=\mathcal A^{D}*_N\mathcal C,~\mathcal F=\mathcal D^D*_N\mathcal B,~\mathcal E=\mathcal C*_N\mathcal D^D,~\mathcal A^\pi=\mathcal I-\mathcal A*_N\mathcal A^D.&
\end{align*}
For simplicity, we denote the modified tensor $\mathcal A-\mathcal C*_N\mathcal D^{D}*_N\mathcal B$ by $\mathcal S$ and the generalized Schur complement $\mathcal D-\mathcal B*_N\mathcal A^{D}*_N\mathcal C$ by $\mathcal Z$. Utilizing properties of Dedekind finiteness in unit tensor subalgebras, we relax and eliminate certain conditions in  the theorems of  \cite{Daochangf, Dijanaf, daochang29}, providing representations of $(\mathcal A-\mathcal C*_N\mathcal D^{D}*_N\mathcal B)^D$ via the Einstein product under weaker assumptions, generalizing and unifying results in \cite{daochang10, 3, 10, 9} and the Sherman-Morrison-Woodbury formula when the tensors are of order two.

Furthermore, we give a numerical example and compute its perturbation error. Moreover, by comparing with Mathematica's \texttt{DrazinInverse} function, our main result provides a more accurate method for computing the Drazin inverse.

\section{Key lemmas}
We start with the Drazin inverse for the sum of two block tensors. Based on different tensor decompositions, we then derive several formulae for the Drazin inverse of a modified tensor $\mathcal S$, which are extremely important for obtaining our main results.
\begin{lemma}{\rm\cite[Theorem 3.1]{LSB2021}}\label{PQ=0}
Let $\mathcal P\in\mathbb{C}^{I_1\times \cdot\cdot\cdot \times I_N \times I_1\times\cdot\cdot\cdot\times I_N}$ and $\mathcal Q\in\mathbb{C}^{I_1\times \cdot\cdot\cdot \times I_N \times I_1\times\cdot\cdot\cdot\times I_N}$.\\
If $\mathcal P*_N\mathcal Q=0$, then
\begin{eqnarray*}\label{pq=0}
(\mathcal P+\mathcal Q)^D&=&(\mathcal I-\mathcal Q*_N\mathcal Q^D)*_N
(\sum_{i=0}^{k-1}\mathcal Q^i*_N(\mathcal P^D)^i)*_N\mathcal P^D  \\&+&\mathcal Q^D*_N(\sum_{i=0}^{k-1}(\mathcal Q^D)^i*_N\mathcal P^i)*_N(\mathcal I-\mathcal P*_N\mathcal P^D),
\end{eqnarray*}
where $max\{\ind(\mathcal P),\ind(\mathcal Q)\}\leq k\leq \ind(\mathcal P)+\ind(\mathcal Q)$.
\end{lemma}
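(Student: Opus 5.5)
The plan is to verify directly that the proposed tensor
\[
\mathcal X=\mathcal Q^\pi*_N\Big(\sum_{i=0}^{k-1}\mathcal Q^i*_N(\mathcal P^D)^{i}\Big)*_N\mathcal P^D+\mathcal Q^D*_N\Big(\sum_{i=0}^{k-1}(\mathcal Q^D)^i*_N\mathcal P^i\Big)*_N\mathcal P^\pi ,
\]
with $\mathcal Q^\pi=\mathcal I-\mathcal Q*_N\mathcal Q^D$ and $\mathcal P^\pi=\mathcal I-\mathcal P*_N\mathcal P^D$, satisfies the three defining equations of the Drazin inverse for $\mathcal P+\mathcal Q$. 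By uniqueness of the Drazin inverse, this gives the claim. The tensors here are square with respect to the Einstein product and the product is associative, so every step is a formal algebraic identity. Alternatively, I could reduce to the matrix case through the standard unfolding isomorphism $\mathbb{C}^{I_1\times\cdots\times I_N\times I_1\times\cdots\times I_N}\to\mathbb{C}^{I\times I}$, $I=I_1\cdots I_N$. This isomorphism is multiplicative for $*_N$, preserves ranges and hence the index, and maps Drazin inverses to Drazin inverses. The statement would then be exactly the known matrix result of Hartwig, Wang and Wei for $PQ=0$. I would present the direct verification and mention the unfolding as a check.

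The first step collects the identities that follow from $\mathcal P*_N\mathcal Q=0$. Since $\mathcal P^D=(\mathcal P^D)^2*_N\mathcal P$, we get $\mathcal P^D*_N\mathcal Q=0$. Since $\mathcal Q^D=\mathcal Q*_N(\mathcal Q^D)^2$, we get $\mathcal P*_N\mathcal Q^D=0$, and therefore $\mathcal P^D*_N\mathcal Q^D=0$. It also follows that $\mathcal P^\pi*_N\mathcal Q=\mathcal Q$ and $\mathcal P*_N\mathcal Q^\pi=\mathcal P$. In addition I will use $\mathcal P^l*_N\mathcal P^\pi=0$ for $l\ge \ind(\mathcal P)$, the analogous fact for $\mathcal Q$, and $k\ge\max\{\ind\mathcal P,\ind\mathcal Q\}$; these make the truncation at $k-1$ harmless.

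Next I compute $(\mathcal P+\mathcal Q)*_N\mathcal X$ and $\mathcal X*_N(\mathcal P+\mathcal Q)$ and show that both equal the same idempotent-like tensor. In each product the sums telescope. For example, $\mathcal Q*_N\mathcal Q^\pi*_N\sum_i\mathcal Q^i(\mathcal P^D)^{i+1}$ shifts the index by one. Combined with $\mathcal P*_N\mathcal X=\mathcal P*_N\mathcal P^D+\mathcal P*_N\mathcal Q^\pi\cdots$ (the $\mathcal Q^D$-block is killed by $\mathcal P*_N\mathcal Q^D=0$), this leaves boundary terms of the form $\mathcal Q^\pi*_N\mathcal Q^{k}*_N(\mathcal P^D)^{k}$ and $(\mathcal Q^D)^k*_N\mathcal P^k*_N\mathcal P^\pi$. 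These vanish because $\mathcal Q^\pi*_N\mathcal Q^k=0$ and $\mathcal P^k*_N\mathcal P^\pi=0$. Commutativity then follows once both products reduce to the same expression. Then $\mathcal X*_N(\mathcal P+\mathcal Q)*_N\mathcal X=\mathcal X$ is checked by a similar, shorter computation using the idempotent just found.

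The main obstacle is the third equation, $(\mathcal P+\mathcal Q)^{l}*_N\mathcal X*_N(\mathcal P+\mathcal Q)=(\mathcal P+\mathcal Q)^{l}$ for a suitable $l$. This also requires bounding $\ind(\mathcal P+\mathcal Q)\le \ind\mathcal P+\ind\mathcal Q$. The key expansion uses $\mathcal P*_N\mathcal Q=0$:
\[
(\mathcal P+\mathcal Q)^n=\sum_{j=0}^{n}\mathcal Q^{j}*_N\mathcal P^{n-j}.
\]
With this, the residual $(\mathcal P+\mathcal Q)^n*_N(\mathcal I-(\mathcal P+\mathcal Q)*_N\mathcal X)$ can be written as a sum of terms $\mathcal Q^j*_N\mathcal Q^\pi\cdots\mathcal P^{n-j}*_N\mathcal P^\pi$. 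For $n\ge \ind\mathcal P+\ind\mathcal Q$, each term has either $j\ge\ind\mathcal Q$ or $n-j\ge\ind\mathcal P$, so it vanishes. If the bookkeeping gets messy, I would fall back on the matrix reduction via unfolding and cite the matrix theorem directly.
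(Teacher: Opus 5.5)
Your proposal is correct. There is no proof in the paper to compare it with: the lemma is imported without argument from Liu et al.\ (their Theorem 3.1), which is the Einstein-product form of the Hartwig--Wang--Wei formula for $PQ=0$. Either of your routes therefore supplies something the paper leaves to the literature.

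The unfolding route is the cheapest and is fully rigorous. The unfolding is an algebra isomorphism for $*_N$ that carries ranges to column spaces, so the statement becomes literally the matrix theorem. The common upper limit $k-1$ is harmless because $\mathcal Q^\pi*_N\mathcal Q^i=0$ for $i\ge\ind(\mathcal Q)$ and $\mathcal P^i*_N\mathcal P^\pi=0$ for $i\ge\ind(\mathcal P)$.

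The direct verification buys independence from the matrix literature. It uses only associativity, $\mathcal P*_N\mathcal Q=0$ and the defining equations of $\mathcal P^D,\mathcal Q^D$, so it goes through in any associative ring. I checked that it closes; three points are worth recording when you write it out.

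\begin{enumerate}
\item In fact $\mathcal P*_N\mathcal X=\mathcal P*_N\mathcal P^D$ exactly. When you compare $(\mathcal P+\mathcal Q)*_N\mathcal X$ with $\mathcal X*_N(\mathcal P+\mathcal Q)$, there is, besides your two boundary terms, a cross term $\mathcal Q*_N\mathcal Q^D*_N\mathcal P*_N\mathcal P^D$. It occurs with both signs and cancels.
\item For the power equation it is convenient to note
\[
\mathcal I-(\mathcal P+\mathcal Q)*_N\mathcal X=\mathcal Q^\pi*_N\mathcal P^\pi-\mathcal Q^\pi*_N\sum_{j=1}^{k-1}\mathcal Q^j*_N(\mathcal P^D)^j-\sum_{j=1}^{k-1}(\mathcal Q^D)^j*_N\mathcal P^j*_N\mathcal P^\pi .
\]
Left-multiply by $(\mathcal P+\mathcal Q)^n=\sum_{m=0}^{n}\mathcal Q^m*_N\mathcal P^{n-m}$ with $n\ge\max\{\ind(\mathcal Q),\ind(\mathcal P)-1\}$, and use $\mathcal Q^n*_N(\mathcal Q^D)^j=\mathcal Q^{n-j}*_N\mathcal Q*_N\mathcal Q^D$. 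What remains is exactly $\sum_{j=1}^{\ind(\mathcal P)-1}\mathcal Q^{n-j}*_N\mathcal Q^\pi*_N\mathcal P^j*_N\mathcal P^\pi$. This vanishes once $n\ge\ind(\mathcal P)+\ind(\mathcal Q)-1$, which confirms your bookkeeping.
\item You do not need an a priori bound on $\ind(\mathcal P+\mathcal Q)$. Once the residual vanishes for one $n$, it vanishes for all larger $n$. So the first defining equation holds for some $l\ge\ind(\mathcal P+\mathcal Q)$, uniqueness identifies $\mathcal X$ with $(\mathcal P+\mathcal Q)^D$, and the index bound comes out as a by-product.
\end{enumerate}
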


Here, we present several key lemmas concerning the Drazin inverse of the modified tensor $\mathcal S$.
\begin{lemma}\label{2.2}
If $\mathcal S*_N\mathcal A^\pi$ is t-nilpotent and if

(a)~$\mathcal S*_N\mathcal A^{\pi}*_N\mathcal Y*_N\mathcal A^e=0$, then
\begin{eqnarray*}
\mathcal S^{D}&=&-\sum_{i=0}^{t-2}\left[(\mathcal S_{\mathcal A^e}^{D})^{i+2}
-\mathcal A^{\pi}*_N\mathcal Y*_N\mathcal (S_{\mathcal A^e}^{D})^{i+3}\right]*_N\mathcal Y*_N\mathcal A^\pi *_N\mathcal S^i
\nonumber\\&+&\mathcal S_{\mathcal A^e}^{D}
-\mathcal A^\pi *_N\mathcal Y*_N(\mathcal S_{\mathcal A^e}^{D})^2,
\end{eqnarray*}

(b)~$\mathcal S*_N\mathcal A^{e}*_N\mathcal Y*_N\mathcal A^\pi=0$, then
\begin{eqnarray*}
\mathcal S^{D}&=&-\sum_{i=0}^{t-1}\mathcal S^{i}*_N\mathcal A^\pi
*_N\mathcal Y*_N\mathcal (S_{\mathcal A^e}^{D})^{i+2}
+\mathcal S_{\mathcal A^e}^{D}.
\end{eqnarray*}
\end{lemma}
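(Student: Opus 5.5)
The plan is to split $\mathcal S$ into two pieces whose product in one order vanishes and then apply Lemma \ref{PQ=0} twice. Throughout I read $\mathcal A^e$ as the idempotent $\mathcal A*_N\mathcal A^D$, so that $\mathcal A^\pi=\mathcal I-\mathcal A^e$, $\mathcal A^e*_N\mathcal A^\pi=\mathcal A^\pi*_N\mathcal A^e=0$, and $\mathcal A^e$ commutes with $\mathcal A$; the argument depends on this reading. Write $\mathcal S=\mathcal S*_N\mathcal A^e+\mathcal S*_N\mathcal A^\pi$. Since $\mathcal A^\pi*_N\mathcal A*_N\mathcal A^e=0$, we get
$$\mathcal S*_N\mathcal A^e=\mathcal S_{\mathcal A^e}-\mathcal A^\pi*_N\mathcal Y*_N\mathcal A^e=:\mathcal M+\mathcal N .$$
Similarly, $\mathcal A^e*_N\mathcal S*_N\mathcal A^\pi=-\mathcal A^e*_N\mathcal Y*_N\mathcal A^\pi$ and $\mathcal A^\pi*_N\mathcal S*_N\mathcal A^e=-\mathcal A^\pi*_N\mathcal Y*_N\mathcal A^e$.

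\emph{Step 1: compute $(\mathcal S*_N\mathcal A^e)^D$.} We have $\mathcal M*_N\mathcal N=0$, because $\mathcal M$ ends with $\mathcal A^e$ and $\mathcal N$ begins with $\mathcal A^\pi$. We also have $\mathcal N^2=0$, so $\mathcal N^D=0$. Lemma \ref{PQ=0} with $\mathcal P=\mathcal M$, $\mathcal Q=\mathcal N$ gives
$$(\mathcal S*_N\mathcal A^e)^D=\sum_i\mathcal N^i*_N(\mathcal M^D)^{i+1}=\mathcal M^D+\mathcal N*_N(\mathcal M^D)^2 .$$
Here $\mathcal N^i=0$ for $i\ge2$. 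Because $\mathcal M^D$ is a polynomial in $\mathcal M=\mathcal A^e*_N\mathcal S*_N\mathcal A^e$, we have $\mathcal A^e*_N\mathcal M^D=\mathcal M^D*_N\mathcal A^e=\mathcal M^D$. Hence
$$(\mathcal S*_N\mathcal A^e)^D=\mathcal S_{\mathcal A^e}^D-\mathcal A^\pi*_N\mathcal Y*_N(\mathcal S^D_{\mathcal A^e})^2 .$$
Moreover $\mathcal M^D*_N\mathcal N=0$, so powers are easy:
$$\bigl((\mathcal S*_N\mathcal A^e)^D\bigr)^{j}=(\mathcal M^D)^j+\mathcal N*_N(\mathcal M^D)^{j+1}.$$

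\emph{Step 2, case (b).} Take $\mathcal P=\mathcal S*_N\mathcal A^e$ and $\mathcal Q=\mathcal S*_N\mathcal A^\pi$, which is $t$-nilpotent, so $\mathcal Q^D=0$. We have
$$\mathcal P*_N\mathcal Q=\mathcal S*_N\mathcal A^e*_N(\mathcal A-\mathcal Y)*_N\mathcal A^\pi=-\mathcal S*_N\mathcal A^e*_N\mathcal Y*_N\mathcal A^\pi=0$$
by (b). Lemma \ref{PQ=0} then gives $\mathcal S^D=\sum_{i=0}^{t-1}\mathcal Q^i*_N(\mathcal P^D)^{i+1}$, taking $k=t$; extra terms vanish since $\mathcal Q^t=0$. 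For $i\ge1$, the factor $\mathcal A^\pi*_N(\mathcal M^D)^{i+1}$ is $0$. What remains is
$$-(\mathcal S*_N\mathcal A^\pi)^i*_N\mathcal A^\pi*_N\mathcal Y*_N(\mathcal M^D)^{i+2}.$$
To reach the stated form I must show $(\mathcal S*_N\mathcal A^\pi)^i*_N\mathcal A^\pi*_N\mathcal Y=\mathcal S^i*_N\mathcal A^\pi*_N\mathcal Y$. I do this by induction on $i$. The discrepancy at each stage is a term containing $\mathcal S*_N\mathcal A^e*_N\mathcal S*_N\mathcal A^\pi=-\mathcal S*_N\mathcal A^e*_N\mathcal Y*_N\mathcal A^\pi=0$, again by (b). Adding the $i=0$ term $\mathcal P^D$ yields formula (b).

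\emph{Step 3, case (a).} Reverse the roles: $\mathcal P=\mathcal S*_N\mathcal A^\pi$ (nilpotent, $\mathcal P^D=0$) and $\mathcal Q=\mathcal S*_N\mathcal A^e$. Then
$$\mathcal P*_N\mathcal Q=-\mathcal S*_N\mathcal A^\pi*_N\mathcal Y*_N\mathcal A^e=0$$
by (a). Lemma \ref{PQ=0} collapses to $\mathcal S^D=\sum_{i=0}^{t-1}(\mathcal Q^D)^{i+1}*_N(\mathcal S*_N\mathcal A^\pi)^i$. Substitute Step 1 for the powers of $\mathcal Q^D$. For $i\ge1$, write $(\mathcal S*_N\mathcal A^\pi)^i=\mathcal S*_N\mathcal A^\pi*_N(\mathcal S*_N\mathcal A^\pi)^{i-1}$ and use $\mathcal M^D*_N\mathcal S*_N\mathcal A^\pi=-\mathcal M^D*_N\mathcal Y*_N\mathcal A^\pi$. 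This turns each summand into $-[(\mathcal M^D)^{i+1}-\mathcal A^\pi*_N\mathcal Y*_N(\mathcal M^D)^{i+2}]*_N\mathcal Y*_N\mathcal A^\pi*_N(\mathcal S*_N\mathcal A^\pi)^{i-1}$. I then reindex $i\mapsto i+1$, which makes the sum run over $0\le i\le t-2$, as stated.

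The main obstacle is the final identification $(\mathcal S*_N\mathcal A^\pi)^{i}=\mathcal S^i$ once $\mathcal A^\pi*_N\mathcal Y$ is in front of it, i.e.\ showing $\mathcal Y*_N\mathcal A^\pi*_N(\mathcal S*_N\mathcal A^\pi)^i=\mathcal Y*_N\mathcal A^\pi*_N\mathcal S^i$. This is needed because the statement writes $\mathcal S^i$ rather than $(\mathcal S*_N\mathcal A^\pi)^i$. I would first try induction on $i$, peeling one $\mathcal S$ at a time. At each step the difference is $\mathcal S*_N\mathcal A^\pi*_N\mathcal Y*_N\mathcal A^e*_N\mathcal S^{i-1}$ (or its mirror image), which vanishes by hypothesis (a). If that bookkeeping fails, the fallback is to state the result with $(\mathcal S*_N\mathcal A^\pi)^i$ and check separately that the two coincide under (a).
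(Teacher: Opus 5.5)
Your route is the paper's own. You use the same splitting $\mathcal S=\mathcal S*_N\mathcal A^\pi+(\mathcal S_{\mathcal A^e}-\mathcal A^\pi*_N\mathcal Y*_N\mathcal A^e)$, the same inner use of Lemma \ref{PQ=0} for $(\mathcal S*_N\mathcal A^e)^D$ and its powers, and in (b) the same identity $(\mathcal S*_N\mathcal A^\pi)^n=\mathcal S^n*_N\mathcal A^\pi$; your discrepancy $\mathcal S*_N\mathcal A^e*_N\mathcal S*_N\mathcal A^\pi=0$ is correct there. Step 1 and case (b) are fine. The gap is the last step of case (a).

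\textbf{The identity in (a) is false as stated.} The identity you set out to prove, $\mathcal Y*_N\mathcal A^\pi*_N(\mathcal S*_N\mathcal A^\pi)^i=\mathcal Y*_N\mathcal A^\pi*_N\mathcal S^i$, does not hold in general, and your induction does not close at the first peel. The discrepancy there is $\mathcal Y*_N\mathcal A^\pi*_N\mathcal S*_N\mathcal A^e*_N\mathcal S^{i-1}=-\mathcal Y*_N\mathcal A^\pi*_N\mathcal Y*_N\mathcal A^e*_N\mathcal S^{i-1}$, not $\mathcal S*_N\mathcal A^\pi*_N\mathcal Y*_N\mathcal A^e*_N\mathcal S^{i-1}$. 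Hypothesis (a) only gives $\mathcal Y*_N\mathcal A^\pi*_N\mathcal Y*_N\mathcal A^e=\mathcal A*_N\mathcal A^\pi*_N\mathcal Y*_N\mathcal A^e$, which need not vanish.

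For a counterexample, take $3\times 3$ matrices with $\mathcal A=e_1e_1^T+e_2e_3^T$, $\mathcal D=\mathcal B=\mathcal I$ and $\mathcal C=\mathcal Y=e_3e_1^T+e_2e_3^T$. Then $\mathcal S*_N\mathcal A^\pi=0$, so (a) holds. Yet $\mathcal Y*_N\mathcal A^\pi*_N\mathcal S=-e_2e_1^T\neq 0=\mathcal Y*_N\mathcal A^\pi*_N\mathcal S*_N\mathcal A^\pi$. Adjoining a nilpotent Jordan block $J_3$ as a direct summand (with $\mathcal Y=0$ there) makes $t=3$. So the failure occurs at $i=1\le t-2$, inside the range you need.

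\textbf{What rescues the formula.} The factor on the left saves you. In every summand, $\mathcal Y*_N\mathcal A^\pi$ is preceded by $(\mathcal M^D)^{i+2}$ or $\mathcal A^\pi*_N\mathcal Y*_N(\mathcal M^D)^{i+3}$, and both end in $\mathcal A^e$. Since $\mathcal A^e*_N\mathcal Y*_N\mathcal A^\pi=-\mathcal A^e*_N\mathcal S*_N\mathcal A^\pi$, only with that factor present does the discrepancy become $\mathcal A^e*_N\mathcal S*_N\mathcal A^\pi*_N\mathcal Y*_N\mathcal A^e*_N\mathcal S^{i-1}$, which (a) kills.

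The clean fix is to follow the paper's order of operations:
\begin{enumerate}
\item[(i)] First prove $(\mathcal S*_N\mathcal A^\pi)^n=\mathcal S*_N\mathcal A^\pi*_N\mathcal S^{n-1}$ for $n\ge 1$. This holds under (a) because $\mathcal S*_N\mathcal A^\pi*_N\mathcal S*_N\mathcal A^e=\mathcal S*_N\mathcal A^\pi*_N(\mathcal A^\pi*_N\mathcal S*_N\mathcal A^e)=-\mathcal S*_N\mathcal A^\pi*_N\mathcal Y*_N\mathcal A^e=0$.
\item[(ii)] Substitute this into $\sum_{i=0}^{t-1}(\mathcal Q^D)^{i+1}*_N(\mathcal S*_N\mathcal A^\pi)^i$. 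This gives terms $[(\mathcal M^D)^{i+2}-\mathcal A^\pi*_N\mathcal Y*_N(\mathcal M^D)^{i+3}]*_N\mathcal S*_N\mathcal A^\pi*_N\mathcal S^i$.
\item[(iii)] Only then replace $(\mathcal M^D)^j*_N\mathcal S*_N\mathcal A^\pi$ by $-(\mathcal M^D)^j*_N\mathcal Y*_N\mathcal A^\pi$.
\end{enumerate}
Your fallback of leaving $(\mathcal S*_N\mathcal A^\pi)^i$ in the formula would not prove the lemma as stated without exactly this argument.
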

\begin{proof}
$(a)$~Note that $\mathcal S=\mathcal S*_N\mathcal A^{\pi}+(\mathcal S_{\mathcal A^e}-\mathcal A^\pi *_N\mathcal Y*_N\mathcal A^e):=\mathcal P+\mathcal Q$.
Since $\mathcal S*_N\mathcal A^\pi$ is $t$-nilpotent, we conclude that $(\mathcal S*_N\mathcal A^\pi)^{D}=0$ and $(\mathcal S*_N\mathcal A^\pi)^{\pi}=\mathcal I$. By calculation, we have $(\mathcal S*_N\mathcal A^\pi)^n=\mathcal S*_N\mathcal A^\pi *_N\mathcal S^{n-1},~n\geq 1$.

From $\mathcal S*_N\mathcal A^{\pi}*_N\mathcal Y*_N\mathcal A^e=0$, we have $\mathcal P*_N \mathcal Q=\mathcal S*_N\mathcal A^\pi*_N(\mathcal S_{\mathcal A^e}-\mathcal A^\pi *_N\mathcal Y*_N\mathcal A^e)=0$. Furthermore, by Lemma \ref{PQ=0}, we obtain
\begin{equation}\label{sd13}
\mathcal S^{D}=\sum_{i=0}^{t-1}\left[(\mathcal S_{\mathcal A^e}-\mathcal A^\pi *_N\mathcal Y *_N\mathcal A^e)^{D}\right]^{i+1}*_N(\mathcal S*_N\mathcal A^\pi)^i.
\end{equation}
Also, notice that $\mathcal S_{\mathcal A^e}*_N(\mathcal A^\pi *_N\mathcal Y*_N\mathcal A^e)=0$, and $(\mathcal A^\pi *_N\mathcal Y*_N\mathcal A^e)^2=0$, which means $\mathcal A^\pi *_N\mathcal Y*_N\mathcal A^e$ is 2-nilpotent, and thus $(\mathcal A^\pi *_N\mathcal Y*_N\mathcal A^e)^D=0$ and $(\mathcal A^\pi *_N\mathcal Y*_N\mathcal A^e)^\pi=\mathcal I$. Due to Lemma \ref{PQ=0}, we derive
$$(\mathcal S_{\mathcal A^e}-\mathcal A^\pi *_N\mathcal Y*_N\mathcal A^e)^D=\mathcal S_{\mathcal A^e}^{D}
-\mathcal A^\pi *_N\mathcal Y*_N(\mathcal S_{\mathcal A^e}^{D})^{2}.$$
As for the formula of $\mathcal S_{\mathcal A^e}^{D}
-\mathcal A^\pi *_N\mathcal Y*_N(\mathcal S_{\mathcal A^e}^{D})^{2}$ to the power of $n$ $(n\geq 1)$, it follows that
$$(\mathcal S_{\mathcal A^e}^{D}
-\mathcal A^\pi *_N\mathcal Y*_N(\mathcal S_{\mathcal A^e}^{D})^{2})^n=
(\mathcal S_{\mathcal A^e}^{D})^{n}
  -\mathcal A^\pi *_N\mathcal Y*_N(\mathcal S_{\mathcal A^e}^{D})^{n+1}.$$
Substituting the above results into \eqref{sd13}, we get the formula below:
\begin{eqnarray*}
\mathcal S^{D}&=&\sum_{i=0}^{t-2}\left[(\mathcal S_{\mathcal A^e}^{D})^{i+2}
-\mathcal A^{\pi}*_N\mathcal Y*_N\mathcal (S_{\mathcal A^e}^{D})^{i+3}\right]*_N\mathcal S*_N\mathcal A^\pi *_N\mathcal S^i
\nonumber\\&+&\mathcal S_{\mathcal A^e}^{D}
-\mathcal A^\pi *_N\mathcal Y*_N(\mathcal S_{\mathcal A^e}^{D})^2.
\end{eqnarray*}
Hence, after simplifying the above formula, we complete the proof.

$(b)$~By leveraging the structure of the proof of part $(a)$, a similar result can be readily derived. The main steps are outlined as follows:
$$\mathcal S=\mathcal S_{\mathcal A^e}+\mathcal A^\pi *_N\mathcal S*_N\mathcal A^e+\mathcal S*_N\mathcal A^{\pi} ~{\rm and}~
(\mathcal S*_N\mathcal A^{\pi})^n=\mathcal S^{n}*_N\mathcal A^{\pi},~n\geq1.$$
Then, we derive the representation of $\mathcal S^{D}$ below:
\begin{eqnarray}\label{simplify1}
\mathcal S^{D}&=&\sum_{i=0}^{t-2}\mathcal S^{i+1}*_N\mathcal A^\pi
*_N\left[(\mathcal S_{\mathcal A^e}^{D})^{i+2}
-\mathcal A^{\pi}*_N\mathcal Y*_N\mathcal (S_{\mathcal A^e}^{D})^{i+3}\right]
\nonumber\\&+&\mathcal S_{\mathcal A^e}^{D}
-\mathcal A^\pi *_N\mathcal Y*_N(\mathcal S_{\mathcal A^e}^{D})^2,
\end{eqnarray}
After simplifying \eqref{simplify1}, we finish the proof.
\end{proof}

\begin{lemma}\label{sd24}
If $\mathcal A^\pi*_N\mathcal S$ is $r$-nilpotent and if

(a)~$\mathcal A^e*_N\mathcal Y*_N\mathcal A^{\pi}*_N\mathcal S=0$, then
\begin{eqnarray*}
\mathcal S^{D}&=&\sum_{i=0}^{r-2}\mathcal S^i*_N\mathcal A^\pi*_N\mathcal S*_N\left[(\mathcal S_{\mathcal A^e}^{D})^{i+2}
-(\mathcal S_{\mathcal A^e}^{D})^{i+3}*_N\mathcal Y*_N\mathcal A^\pi\right]
\nonumber\\&+&\mathcal S_{\mathcal A^e}^{D}
-(\mathcal S_{\mathcal A^e}^{D})^2*_N\mathcal Y*_N\mathcal A^\pi,
\end{eqnarray*}

(b)~$\mathcal A^\pi*_N\mathcal Y*_N\mathcal A^{e}*_N\mathcal S=0$, then
\begin{eqnarray*}
\mathcal S^{D}&=&-\sum_{i=0}^{r-1}
(\mathcal S_{\mathcal A^e}^{D})^{i+2}*_N\mathcal Y*_N\mathcal A^\pi*_N\mathcal S^{i}+\mathcal S_{\mathcal A^e}^{D},
\end{eqnarray*}
\end{lemma}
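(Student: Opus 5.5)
The plan is to mirror the proof of Lemma \ref{2.2}, interchanging the roles of left and right multiplication. I read $\mathcal A^e$ as $\mathcal A*_N\mathcal A^D=\mathcal I-\mathcal A^\pi$, so that $\mathcal A^e$ and $\mathcal A^\pi$ are complementary idempotents commuting with $\mathcal A$. Since $\mathcal S=\mathcal A-\mathcal Y$, this gives $\mathcal A^e*_N\mathcal S*_N\mathcal A^\pi=-\mathcal A^e*_N\mathcal Y*_N\mathcal A^\pi$ and $\mathcal A^\pi*_N\mathcal S*_N\mathcal A^e=-\mathcal A^\pi*_N\mathcal Y*_N\mathcal A^e$. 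Also $\mathcal S_{\mathcal A^e}^D*_N\mathcal A^e=\mathcal A^e*_N\mathcal S_{\mathcal A^e}^D=\mathcal S_{\mathcal A^e}^D$, because $\mathcal S_{\mathcal A^e}^D=\mathcal S_{\mathcal A^e}*_N(\mathcal S_{\mathcal A^e}^D)^2=(\mathcal S_{\mathcal A^e}^D)^2*_N\mathcal S_{\mathcal A^e}$.

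\emph{Case (a).} Write
\[
\mathcal S=\mathcal A^e*_N\mathcal S*_N\mathcal A^e+\mathcal A^e*_N\mathcal S*_N\mathcal A^\pi+\mathcal A^\pi*_N\mathcal S=\mathcal P+\mathcal Q,
\]
with $\mathcal P=\mathcal S_{\mathcal A^e}-\mathcal A^e*_N\mathcal Y*_N\mathcal A^\pi$ and $\mathcal Q=\mathcal A^\pi*_N\mathcal S$.
\begin{itemize}
\item Since $\mathcal A^\pi*_N\mathcal A^\pi=\mathcal A^\pi$ and $\mathcal A^e*_N\mathcal A^\pi=0$, we get $\mathcal P*_N\mathcal Q=-\mathcal A^e*_N\mathcal Y*_N\mathcal A^\pi*_N\mathcal S=0$ by hypothesis.
\item $\mathcal Q$ is $r$-nilpotent, so $\mathcal Q^D=0$ and $\mathcal Q^\pi=\mathcal I$. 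Lemma \ref{PQ=0} then gives $\mathcal S^D=\sum_{i=0}^{r-1}\mathcal Q^i*_N(\mathcal P^D)^{i+1}$.
\item To compute $\mathcal P^D$, apply Lemma \ref{PQ=0} again to $\mathcal P=\mathcal N+\mathcal S_{\mathcal A^e}$ with $\mathcal N=-\mathcal A^e*_N\mathcal Y*_N\mathcal A^\pi$. Here $\mathcal N^2=0$ and $\mathcal N*_N\mathcal S_{\mathcal A^e}=0$. Only the second sum survives, so
\[
\mathcal P^D=\mathcal S_{\mathcal A^e}^D-(\mathcal S_{\mathcal A^e}^D)^2*_N\mathcal Y*_N\mathcal A^\pi.
\]
\item Since $\mathcal Y*_N\mathcal A^\pi*_N\mathcal S_{\mathcal A^e}^D=0$, induction gives $(\mathcal P^D)^n=(\mathcal S_{\mathcal A^e}^D)^n-(\mathcal S_{\mathcal A^e}^D)^{n+1}*_N\mathcal Y*_N\mathcal A^\pi$.
\end{itemize}

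The remaining ingredient is the power identity $(\mathcal A^\pi*_N\mathcal S)^{n}=\mathcal S^{n-1}*_N\mathcal A^\pi*_N\mathcal S$ for $n\ge1$. I would prove it by induction. The difference between $\mathcal S*_N(\mathcal A^\pi*_N\mathcal S)^{n-1}$ and $\mathcal A^\pi*_N\mathcal S*_N(\mathcal A^\pi*_N\mathcal S)^{n-1}$ is $\mathcal A^e*_N\mathcal S*_N\mathcal A^\pi*_N\mathcal S*_N(\cdots)=-\mathcal A^e*_N\mathcal Y*_N\mathcal A^\pi*_N\mathcal S*_N(\cdots)$, which vanishes by (a). Substituting this identity and the powers of $\mathcal P^D$, then separating off the $i=0$ term and reindexing $i\mapsto i+1$, yields exactly the stated formula. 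I expect this step to be the main place where care is needed, namely checking that hypothesis (a) is used precisely here.

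\emph{Case (b).} First transfer nilpotency: $(\mathcal S*_N\mathcal A^\pi)^{n+1}=\mathcal S*_N(\mathcal A^\pi*_N\mathcal S)^n*_N\mathcal A^\pi$, so $\mathcal S*_N\mathcal A^\pi$ is nilpotent of index at most $r+1$. Now decompose
\[
\mathcal S=\mathcal S*_N\mathcal A^\pi+\bigl(\mathcal S_{\mathcal A^e}-\mathcal A^\pi*_N\mathcal Y*_N\mathcal A^e\bigr),
\]
taking $\mathcal P=\mathcal S_{\mathcal A^e}-\mathcal A^\pi*_N\mathcal Y*_N\mathcal A^e$ and $\mathcal Q=\mathcal S*_N\mathcal A^\pi$.
\begin{itemize}
\item We have $\mathcal P*_N\mathcal Q=-\mathcal A^\pi*_N\mathcal Y*_N\mathcal A^e*_N\mathcal S*_N\mathcal A^\pi$, which is $0$ by (b).
\item As in the proof of Lemma \ref{2.2}(a), $\mathcal P^D=\mathcal S_{\mathcal A^e}^D-\mathcal A^\pi*_N\mathcal Y*_N(\mathcal S_{\mathcal A^e}^D)^2$.
\item Since $\mathcal Q^D=0$ and $\mathcal Q^\pi=\mathcal I$, Lemma \ref{PQ=0} gives $\mathcal S^D=\sum_i\mathcal Q^i*_N(\mathcal P^D)^{i+1}$. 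This has the wrong orientation compared with the target formula, which has $\mathcal S^i$ on the right.
\end{itemize}

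So instead I would use the mirror decomposition $\mathcal S=\mathcal S_{\mathcal A^e}+\mathcal S*_N\mathcal A^\pi\cdot$ analogue. Concretely, set $\mathcal P=\mathcal A^\pi*_N\mathcal S$ and $\mathcal Q=\mathcal S_{\mathcal A^e}+\mathcal A^e*_N\mathcal S*_N\mathcal A^\pi$, with $\mathcal Q$ in the role whose Drazin inverse is nonzero.
\begin{itemize}
\item Check that $\mathcal Q*_N\mathcal P$-type products vanish via (b), and choose the order in Lemma \ref{PQ=0} so that the surviving sum is $\sum_i(\mathcal Q^D)^{i+1}*_N\mathcal P^i$.
\item Compute $\mathcal Q^D=\mathcal S_{\mathcal A^e}^D-(\mathcal S_{\mathcal A^e}^D)^2*_N\mathcal Y*_N\mathcal A^\pi$ by the nilpotent-perturbation argument used in case (a).
\item Use $(\mathcal A^\pi*_N\mathcal S)^i$ together with $\mathcal S_{\mathcal A^e}^D*_N\mathcal A^\pi=0$ to collapse the products to $-(\mathcal S_{\mathcal A^e}^D)^{i+2}*_N\mathcal Y*_N\mathcal A^\pi*_N\mathcal S^i$.
\end{itemize}
Working out which ordering makes (b) exactly the needed annihilation condition is the fiddly part of this case.
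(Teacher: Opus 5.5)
Your part (a) is correct and is the paper's proof. It uses the same splitting $\mathcal S=(\mathcal S_{\mathcal A^e}-\mathcal A^e*_N\mathcal Y*_N\mathcal A^\pi)+\mathcal A^\pi*_N\mathcal S$, Lemma \ref{PQ=0} with the nilpotent summand on the right, the same nested use of Lemma \ref{PQ=0} for the first summand, and the same power identity and reindexing. You are also right that $(\mathcal A^\pi*_N\mathcal S)^n=\mathcal S^{n-1}*_N\mathcal A^\pi*_N\mathcal S$ already needs (a). Your final plan for (b) is the paper's too. Since $\mathcal S_{\mathcal A^e}+\mathcal A^e*_N\mathcal S*_N\mathcal A^\pi=\mathcal A^e*_N\mathcal S$, you split $\mathcal S=\mathcal A^\pi*_N\mathcal S+\mathcal A^e*_N\mathcal S$ exactly as in (a), and the paper just applies Lemma \ref{PQ=0} in the other order. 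However, you leave open the one verification that makes (b) work, and the product you point to is the wrong one.

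Take $\mathcal P=\mathcal A^\pi*_N\mathcal S$ and $\mathcal Q=\mathcal A^e*_N\mathcal S$. The product $\mathcal Q*_N\mathcal P=\mathcal A^e*_N\mathcal S*_N\mathcal A^\pi*_N\mathcal S=-\mathcal A^e*_N\mathcal Y*_N\mathcal A^\pi*_N\mathcal S$ is hypothesis (a), not (b). What (b) controls is the other product; using $\mathcal A^\pi*_N\mathcal A*_N\mathcal A^e=0$,
\[
\mathcal P*_N\mathcal Q=\mathcal A^\pi*_N\mathcal S*_N\mathcal A^e*_N\mathcal S=-\mathcal A^\pi*_N\mathcal Y*_N\mathcal A^e*_N\mathcal S=0 .
\]
This is precisely the ordering in Lemma \ref{PQ=0} (first summand nilpotent, first times second equal to $0$) that leaves $\mathcal S^D=\sum_{i=0}^{r-1}(\mathcal Q^D)^{i+1}*_N\mathcal P^i$.

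The same identity $\mathcal A^\pi*_N\mathcal S*_N\mathcal A^e*_N\mathcal S=0$ gives, by induction, $(\mathcal A^\pi*_N\mathcal S)^i=\mathcal A^\pi*_N\mathcal S^i$. Your collapse step needs this but never states it. After $\mathcal S_{\mathcal A^e}^D*_N\mathcal A^\pi=0$ removes the $(\mathcal S_{\mathcal A^e}^D)^{i+1}$ part for $i\ge 1$, you are left with $-(\mathcal S_{\mathcal A^e}^D)^{i+2}*_N\mathcal Y*_N(\mathcal A^\pi*_N\mathcal S)^i$. Only this identity turns it into the stated $-(\mathcal S_{\mathcal A^e}^D)^{i+2}*_N\mathcal Y*_N\mathcal A^\pi*_N\mathcal S^i$. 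With these two lines, (b) is complete and coincides with the paper's argument.

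Finally, your first attempt at (b) fails for more than orientation. There $\mathcal P*_N\mathcal Q$ also contains $\mathcal S_{\mathcal A^e}*_N\mathcal S*_N\mathcal A^\pi=-\mathcal S_{\mathcal A^e}*_N\mathcal Y*_N\mathcal A^\pi$, which (b) does not annihilate. The reverse product is $-\mathcal S*_N\mathcal A^\pi*_N\mathcal Y*_N\mathcal A^e$, the condition of Lemma \ref{2.2}(a), which (b) does not give either. So that splitting does not meet the hypothesis of Lemma \ref{PQ=0} under (b), and the nilpotency transfer to $\mathcal S*_N\mathcal A^\pi$ is not needed.
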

\begin{proof}
$(a)$~We observe that $\mathcal S=(\mathcal S_{\mathcal A^e}-\mathcal A^e*_N\mathcal Y*_N\mathcal A^\pi)+\mathcal A^\pi *_N\mathcal S:=\mathcal P+\mathcal Q$.
Owing to $\mathcal A^\pi*_N\mathcal S$ is r-nilpotent, we see that $(\mathcal A^\pi*_N\mathcal S)^{D}=0$ and $(\mathcal A^\pi*_N\mathcal S)^{\pi}=\mathcal I$. By calculation, we obtain $(\mathcal A^\pi*_N\mathcal S)^n=\mathcal S^{n-1}*_N\mathcal A^\pi *_N\mathcal S,~n\geq1$.

From $\mathcal A^e*_N\mathcal Y*_N\mathcal A^{\pi}*_N\mathcal S=0$, we have $\mathcal P*_N\mathcal Q=(\mathcal S_{\mathcal A^e}-\mathcal A^e *_N\mathcal Y*_N\mathcal A^\pi)*_N\mathcal A^\pi*_N\mathcal S=0$. Applying Lemma \ref{PQ=0}, we get
\begin{equation}\label{sd}
\mathcal S^{D}=\sum_{i=0}^{r-1}(\mathcal A^\pi *_N\mathcal S)^i*_N\left[(\mathcal S_{\mathcal A^e}-\mathcal A^e *_N\mathcal Y*_N\mathcal A^\pi)^{D}\right]^{i+1}.
\end{equation}
Also, note that $\mathcal A^e*_N\mathcal Y*_N\mathcal A^\pi*_N\mathcal S_{\mathcal A^e}=0$ satisfies the condition of Lemma \ref{PQ=0}. Additionally, $(\mathcal A^e *_N\mathcal S*_N\mathcal A^\pi)^2=0$, which means $\mathcal A^e *_N\mathcal S*_N\mathcal A^\pi$ is 2-nilpotent. Consequently, $(\mathcal A^e *_N\mathcal S*_N\mathcal A^\pi)^D=0$ and $(\mathcal A^e *_N\mathcal S*_N\mathcal A^\pi)^\pi=\mathcal I$. Therefore, we derive
$$(\mathcal S_{\mathcal A^e}-\mathcal A^e *_N\mathcal Y*_N\mathcal A^\pi)^D=\mathcal S_{\mathcal A^e}^{D}
-(\mathcal S_{\mathcal A^e}^{D})^{2}*_N\mathcal Y*_N\mathcal A^\pi.$$
Hence, the formula of $\mathcal S_{\mathcal A^e}^{D}
-(\mathcal S_{\mathcal A^e}^{D})^{2}*_N\mathcal Y*_N\mathcal A^\pi$ to the power of $n$ $(n\geq 1)$ is represented as follows:
$$(\mathcal S_{\mathcal A^e}^{D}
-(\mathcal S_{\mathcal A^e}^{D})^{2}*_N\mathcal Y*_N\mathcal A^\pi)^n=
(\mathcal S_{\mathcal A^e}^{D})^{n}
  - (\mathcal S_{\mathcal A^e}^{D})^{n+1}*_N\mathcal Y*_N\mathcal A^\pi.$$
After substituting the above results into \eqref{sd}, we have
\begin{eqnarray*}
\mathcal S^{D}&=&\sum_{i=0}^{r-2}\mathcal S^i*_N\mathcal A^\pi*_N\mathcal S*_N\left[(\mathcal S_{\mathcal A^e}^{D})^{i+2}
-(\mathcal S_{\mathcal A^e}^{D})^{i+3}*_N\mathcal Y*_N\mathcal A^\pi\right]
\nonumber\\&+&\mathcal S_{\mathcal A^e}^{D}
-(\mathcal S_{\mathcal A^e}^{D})^2*_N\mathcal Y*_N\mathcal A^\pi,
\end{eqnarray*}
Therefore, after adjusting the upper and lower limits of the sum and simplifying the formula, we finish the proof.

$(b)$~Taking the same reasoning approach as in the proof of $(a)$, a similar conclusion follows naturally. The principal results are provided below:
$$\mathcal S=\mathcal A^\pi *_N\mathcal S-\mathcal A^e*_N\mathcal Y*_N\mathcal A^\pi+\mathcal S_{\mathcal A^e} ~{\rm and}~
(\mathcal A^{\pi}*_N\mathcal S)^n=\mathcal A^{\pi}*_N\mathcal S^{n},~n\geq1.$$
Furthermore, we obtain the following representation
\begin{eqnarray*}
\mathcal S^{D}&=&\sum_{i=0}^{r-2}
\left[(\mathcal S_{\mathcal A^e}^{D})^{i+2}
-(\mathcal S_{\mathcal A^e}^{D})^{i+3}*_N\mathcal Y*_N\mathcal A^\pi\right]*_N\mathcal A^\pi*_N\mathcal S^{i+1}
\nonumber\\&+&\mathcal S_{\mathcal A^e}^{D}
-(\mathcal S_{\mathcal A^e}^{D})^2*_N\mathcal Y*_N\mathcal A^\pi.
\end{eqnarray*}
Thus, the proof is concluded upon refining the summation bounds and simplifying the expression.
\end{proof}

In what follows, we refine and extend  \cite[Lemma 2.2 $(a)(c)$]{Dijanaf} by removing one of its assumptions and proposing the following corollaries.
\begin{corollary}\label{1.1}
If $\mathcal S*_N\mathcal A^\pi*_N\mathcal Y=0$ and $\ind(\mathcal A)=k$, then
\begin{eqnarray*}
\mathcal S^{D}&=&-\sum_{i=0}^{k-1}\left[(\mathcal S_{\mathcal A^e}^{D})^{i+2}
-\mathcal A^{\pi}*_N\mathcal Y*_N\mathcal (S_{\mathcal A^e}^{D})^{i+3}\right]*_N\mathcal Y*_N\mathcal A^\pi *_N\mathcal A^i
\nonumber\\&+&\mathcal S_{\mathcal A^e}^{D}
-\mathcal A^\pi *_N\mathcal Y*_N(\mathcal S_{\mathcal A^e}^{D})^2.
\end{eqnarray*}
\end{corollary}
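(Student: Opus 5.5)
The plan is to obtain the corollary as a special case of Lemma \ref{2.2}(a). Throughout I read $\mathcal A^e$ as the spectral idempotent $\mathcal A*_N\mathcal A^D=\mathcal I-\mathcal A^\pi$. This idempotent commutes with $\mathcal A$, and it satisfies $\mathcal A^e*_N\mathcal A^\pi=\mathcal A^\pi*_N\mathcal A^e=0$.

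\textbf{Step 1 (hypothesis (a) of Lemma \ref{2.2}).} The assumption $\mathcal S*_N\mathcal A^\pi*_N\mathcal Y=0$ immediately gives $\mathcal S*_N\mathcal A^\pi*_N\mathcal Y*_N\mathcal A^e=0$.

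\textbf{Step 2 (nilpotency of $\mathcal S*_N\mathcal A^\pi$, with an explicit $t$).} Since $\mathcal S=\mathcal A-\mathcal Y$, the assumption gives
$$\mathcal S*_N\mathcal A^\pi*_N\mathcal S=\mathcal S*_N\mathcal A^\pi*_N\mathcal A-\mathcal S*_N\mathcal A^\pi*_N\mathcal Y=\mathcal S*_N\mathcal A^\pi*_N\mathcal A.$$
Iterating, and using that $\mathcal A^\pi$ commutes with $\mathcal A$, we get $\mathcal S*_N\mathcal A^\pi*_N\mathcal S^{i}=\mathcal S*_N\mathcal A^\pi*_N\mathcal A^{i}$ for all $i\ge 0$. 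Hence
$$(\mathcal S*_N\mathcal A^\pi)^{n}=\mathcal S*_N\mathcal A^\pi*_N\mathcal S^{n-1}=\mathcal S*_N(\mathcal A^\pi*_N\mathcal A^{n-1})*_N\mathcal A^\pi .$$
Because $\ind(\mathcal A)=k$, we have $\mathcal A^\pi*_N\mathcal A^{k}=0$. Therefore $(\mathcal S*_N\mathcal A^\pi)^{k+1}=0$, and Lemma \ref{2.2}(a) applies with $t=k+1$, so its sum runs over $i=0,\dots,k-1$.

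\textbf{Step 3 (rewrite the summands).} I would not start from the final simplified form of Lemma \ref{2.2}(a). Instead I use the intermediate expression from its proof, whose $i$-th summand is
$$\left[(\mathcal S_{\mathcal A^e}^{D})^{i+2}-\mathcal A^{\pi}*_N\mathcal Y*_N(\mathcal S_{\mathcal A^e}^{D})^{i+3}\right]*_N\mathcal S*_N\mathcal A^\pi*_N\mathcal S^i .$$
First, Step 2 turns $\mathcal S*_N\mathcal A^\pi*_N\mathcal S^i$ into $\mathcal S*_N\mathcal A^\pi*_N\mathcal A^i$. Next, the bracket ends in a factor $\mathcal S_{\mathcal A^e}^{D}$, which satisfies $\mathcal S_{\mathcal A^e}^{D}=\mathcal S_{\mathcal A^e}^{D}*_N\mathcal A^e$ because $\mathcal S_{\mathcal A^e}=\mathcal A^e*_N\mathcal S*_N\mathcal A^e$. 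We also have
$$\mathcal A^e*_N\mathcal S*_N\mathcal A^\pi=\mathcal A^e*_N\mathcal A*_N\mathcal A^\pi-\mathcal A^e*_N\mathcal Y*_N\mathcal A^\pi=-\mathcal A^e*_N\mathcal Y*_N\mathcal A^\pi .$$
So the bracket times $\mathcal S*_N\mathcal A^\pi$ equals minus the bracket times $\mathcal Y*_N\mathcal A^\pi$. Substituting both rewrites gives exactly the stated sum $-\sum_{i=0}^{k-1}[\cdots]*_N\mathcal Y*_N\mathcal A^\pi*_N\mathcal A^i$, and the two remaining terms are unchanged.

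The main obstacle is Step 3. The displayed final form of Lemma \ref{2.2}(a) involves $\mathcal Y*_N\mathcal A^\pi*_N\mathcal S^i$, and it is not obviously equal to $\mathcal Y*_N\mathcal A^\pi*_N\mathcal A^i$, since $\mathcal Y*_N\mathcal A^\pi*_N\mathcal Y$ need not vanish. This is why the substitution $\mathcal S^i\to\mathcal A^i$ must be made while the factor $\mathcal S*_N\mathcal A^\pi$ is still present, before absorbing it. I would verify carefully that the left factor absorbs $\mathcal A^e$, which follows from $\mathcal S_{\mathcal A^e}^D=\mathcal A^e*_N\mathcal S_{\mathcal A^e}^D*_N\mathcal A^e$, a consequence of the Drazin inverse of $\mathcal S_{\mathcal A^e}$ being a polynomial in $\mathcal S_{\mathcal A^e}$.
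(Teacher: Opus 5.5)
Your proposal is correct and is essentially the paper's argument. The paper's proof just says it proceeds as in Lemma~\ref{2.2}: the same splitting $\mathcal S=\mathcal S*_N\mathcal A^\pi+(\mathcal S_{\mathcal A^e}-\mathcal A^\pi*_N\mathcal Y*_N\mathcal A^e)$ with Lemma~\ref{PQ=0}, where $(\mathcal S*_N\mathcal A^\pi)^n=\mathcal S*_N\mathcal A^\pi*_N\mathcal A^{n-1}$ gives $t=k+1$ and lets $\mathcal S^i$ be replaced by $\mathcal A^i$, exactly as you do. Your caution in Step 3 is harmless but not needed: the bracket absorbs $\mathcal A^e$ on the right and $\mathcal A^e*_N\mathcal Y*_N\mathcal A^\pi=-\mathcal A^e*_N\mathcal S*_N\mathcal A^\pi$, so the substitution $\mathcal S^i\to\mathcal A^i$ is also valid directly in the displayed final form of Lemma~\ref{2.2}(a).
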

\begin{proof}
The proof proceeds similarly to Lemma \ref{2.2}.
\end{proof}

\begin{corollary}\label{2.1}
If $\mathcal Y*_N\mathcal A^\pi*_N\mathcal S=0$ and $\ind(\mathcal A)=k$, then
\begin{eqnarray*}
\mathcal S^{D}&=&-\sum_{i=0}^{k-1}\mathcal A^i*_N\mathcal A^\pi*_N\mathcal Y*_N\left[(\mathcal S_{\mathcal A^e}^{D})^{i+2}
-(\mathcal S_{\mathcal A^e}^{D})^{i+3}*_N\mathcal Y*_N\mathcal A^\pi\right]
\nonumber\\&+&\mathcal S_{\mathcal A^e}^{D}
-(\mathcal S_{\mathcal A^e}^{D})^2*_N\mathcal Y*_N\mathcal A^\pi.
\end{eqnarray*}
\end{corollary}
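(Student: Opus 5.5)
The plan is to mirror the proof of Lemma \ref{sd24}(a): use the hypothesis $\mathcal Y*_N\mathcal A^\pi*_N\mathcal S=0$ to show that $\mathcal A^\pi*_N\mathcal S$ is nilpotent of index at most $k+1$, and then rewrite the resulting formula in terms of powers of $\mathcal A$. Throughout, $\mathcal A^e$ is read as the spectral idempotent $\mathcal A*_N\mathcal A^D=\mathcal I-\mathcal A^\pi$; this is my assumption, since the excerpt does not define $\mathcal A^e$. Under this reading, $\mathcal A^e$ and $\mathcal A^\pi$ commute with every power of $\mathcal A$, and $\mathcal A*_N\mathcal A^e*_N\mathcal A^\pi=0$. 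Also, $\mathcal S_{\mathcal A^e}^D$ satisfies $\mathcal A^e*_N\mathcal S_{\mathcal A^e}^D=\mathcal S_{\mathcal A^e}^D*_N\mathcal A^e=\mathcal S_{\mathcal A^e}^D$. This holds because $\mathcal S_{\mathcal A^e}^D=(\mathcal S_{\mathcal A^e}^D)^2*_N\mathcal S_{\mathcal A^e}=\mathcal S_{\mathcal A^e}*_N(\mathcal S_{\mathcal A^e}^D)^2$.

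\textbf{Step 1 (splitting).} Write $\mathcal S=\mathcal P+\mathcal Q$ with
$$\mathcal P=\mathcal A^e*_N\mathcal S=\mathcal S_{\mathcal A^e}-\mathcal A^e*_N\mathcal Y*_N\mathcal A^\pi,\qquad \mathcal Q=\mathcal A^\pi*_N\mathcal S.$$
The identity for $\mathcal P$ uses $\mathcal A^e*_N\mathcal A*_N\mathcal A^\pi=0$. Then $\mathcal P*_N\mathcal Q=\mathcal A^e*_N\mathcal S*_N\mathcal A^\pi*_N\mathcal S=-\mathcal A^e*_N\mathcal Y*_N\mathcal A^\pi*_N\mathcal S=0$ by hypothesis, so Lemma \ref{PQ=0} applies.

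\textbf{Step 2 (nilpotency).} Since $\mathcal Y*_N\mathcal A^\pi*_N\mathcal S=0$, we get $\mathcal Q^2=\mathcal A^\pi*_N(\mathcal A-\mathcal Y)*_N\mathcal A^\pi*_N\mathcal S=\mathcal A*_N\mathcal A^\pi*_N\mathcal S$. By induction, $\mathcal Q^{n}=\mathcal A^{n-1}*_N\mathcal A^\pi*_N\mathcal S$. Hence
$$\mathcal Q^{k+1}=\mathcal A^k*_N\mathcal A^\pi*_N(\mathcal A-\mathcal Y)=0,$$
because $\mathcal A^k*_N\mathcal A^\pi=0$ when $\ind(\mathcal A)=k$. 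So $\mathcal Q^D=0$, $\mathcal Q^\pi=\mathcal I$, and Lemma \ref{PQ=0} (as in \eqref{sd}, with $r=k+1$) gives
$$\mathcal S^D=\sum_{i=0}^{k}\mathcal Q^i*_N(\mathcal P^D)^{i+1}.$$

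\textbf{Step 3 ($\mathcal P^D$ and its powers).} This is exactly the computation in Lemma \ref{sd24}(a). The term $\mathcal A^e*_N\mathcal Y*_N\mathcal A^\pi$ is 2-nilpotent and is annihilated on the left of $\mathcal S_{\mathcal A^e}$. Therefore
$$(\mathcal P^D)^n=(\mathcal S_{\mathcal A^e}^D)^n-(\mathcal S_{\mathcal A^e}^D)^{n+1}*_N\mathcal Y*_N\mathcal A^\pi .$$
The $i=0$ term yields $\mathcal S_{\mathcal A^e}^D-(\mathcal S_{\mathcal A^e}^D)^2*_N\mathcal Y*_N\mathcal A^\pi$.

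\textbf{Step 4 (the terms with $i\ge1$).} This is the step needing the most care. For $i\ge1$ we have $\mathcal Q^i=\mathcal A^{i-1}*_N\mathcal A^\pi*_N\mathcal S$, and each bracket begins with a factor $\mathcal S_{\mathcal A^e}^D=\mathcal A^e*_N\mathcal S_{\mathcal A^e}^D$. The key identity is
$$\mathcal A^\pi*_N\mathcal S*_N\mathcal A^e=\mathcal A^\pi*_N\mathcal A*_N\mathcal A^e-\mathcal A^\pi*_N\mathcal Y*_N\mathcal A^e=-\mathcal A^\pi*_N\mathcal Y*_N\mathcal A^e .$$
Using it, the $i$-th term becomes
$$-\mathcal A^{i-1}*_N\mathcal A^\pi*_N\mathcal Y*_N\left[(\mathcal S_{\mathcal A^e}^D)^{i+1}-(\mathcal S_{\mathcal A^e}^D)^{i+2}*_N\mathcal Y*_N\mathcal A^\pi\right].$$
Reindexing $i\mapsto i+1$ gives exactly the sum over $i=0,\dots,k-1$ in the statement. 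The upper limit is $k-1$ because $i$ runs only to $k$ in Step 2.

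The points to verify carefully are that $\mathcal A^\pi$ commutes past $\mathcal A^{i-1}$, and that the absorption $\mathcal A^e*_N\mathcal S_{\mathcal A^e}^D=\mathcal S_{\mathcal A^e}^D$ is legitimate; both follow from the reading of $\mathcal A^e$ fixed above.
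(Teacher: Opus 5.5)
Your proof is correct and is essentially the paper's argument: the paper only says the corollary follows as in Lemma \ref{sd24}(a), i.e.\ via the splitting $\mathcal S=(\mathcal S_{\mathcal A^e}-\mathcal A^e*_N\mathcal Y*_N\mathcal A^\pi)+\mathcal A^\pi*_N\mathcal S$ and Lemma \ref{PQ=0}. You supply the details it leaves implicit, namely $(\mathcal A^\pi*_N\mathcal S)^n=\mathcal A^{n-1}*_N\mathcal A^\pi*_N\mathcal S$ (nilpotency index at most $k+1$) and $\mathcal A^\pi*_N\mathcal S*_N\mathcal A^e=-\mathcal A^\pi*_N\mathcal Y*_N\mathcal A^e$, and your reading $\mathcal A^e=\mathcal A*_N\mathcal A^D$ matches the paper's definition in the proof of Lemma \ref{2.7sae}.
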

\begin{proof}
The proof follows the same approach as Lemma \ref{sd24}.
\end{proof}

\begin{lemma}\label{ece}
  Let $\mathcal X,~\mathcal Y,~\mathcal I\in\mathbb{C}^{I_1\times \cdot\cdot\cdot \times I_N \times I_1\times\cdot\cdot\cdot\times I_N}$ and $\mathcal I^{2}=\mathcal I$. If $\mathcal I*_N\mathcal X*_N\mathcal I*_N\mathcal Y*_N\mathcal I=\mathcal I$, then $\mathcal I*_N\mathcal Y*_N\mathcal I*_N\mathcal X*_N\mathcal I=\mathcal I$.
\end{lemma}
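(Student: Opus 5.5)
The plan is to reduce the statement to Dedekind finiteness of a finite-dimensional algebra. Here $\mathcal I$ denotes an arbitrary idempotent tensor, not the unit tensor. Consider the corner algebra
\[
\mathfrak{A}=\mathcal I*_N\mathbb{C}^{I_1\times\cdots\times I_N\times I_1\times\cdots\times I_N}*_N\mathcal I
=\{\mathcal I*_N\mathcal T*_N\mathcal I\}.
\]
Since $\mathcal I^2=\mathcal I$, this set is closed under addition and the Einstein product, and $\mathcal I$ is its unit. It is a finite-dimensional complex algebra.

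Put $\mathcal X'=\mathcal I*_N\mathcal X*_N\mathcal I$ and $\mathcal Y'=\mathcal I*_N\mathcal Y*_N\mathcal I$, both in $\mathfrak{A}$. Using $\mathcal I^2=\mathcal I$, the hypothesis becomes $\mathcal X'*_N\mathcal Y'=\mathcal I$, and the desired conclusion is $\mathcal Y'*_N\mathcal X'=\mathcal I$. So it suffices to show that in the unital finite-dimensional algebra $\mathfrak{A}$ a right inverse is also a left inverse.

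To prove this, consider the linear map $L:\mathfrak{A}\to\mathfrak{A}$ given by $L(\mathcal T)=\mathcal Y'*_N\mathcal T$. If $L(\mathcal T)=0$, then $\mathcal T=\mathcal I*_N\mathcal T=\mathcal X'*_N\mathcal Y'*_N\mathcal T=0$, so $L$ is injective. Because $\mathfrak{A}$ is finite-dimensional, $L$ is therefore surjective. Hence there is $\mathcal W\in\mathfrak{A}$ with $\mathcal Y'*_N\mathcal W=\mathcal I$. Then
\[
\mathcal X'=\mathcal X'*_N(\mathcal Y'*_N\mathcal W)=(\mathcal X'*_N\mathcal Y')*_N\mathcal W=\mathcal W,
\]
so $\mathcal Y'*_N\mathcal X'=\mathcal I$. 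Expanding this gives $\mathcal I*_N\mathcal Y*_N\mathcal I*_N\mathcal X*_N\mathcal I=\mathcal I$.

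An alternative route is to unfold the tensors into matrices of size $(I_1\cdots I_N)\times(I_1\cdots I_N)$; the Einstein product corresponds to matrix multiplication under this unfolding. Working in a basis adapted to the idempotent, $\mathcal I$ becomes $\operatorname{diag}(I_r,0)$, and the statement reduces to the fact that $r\times r$ matrices with $XY=I_r$ also satisfy $YX=I_r$.

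The only delicate point is confirming that $\mathcal I$ acts as a two-sided identity on $\mathfrak{A}$ and that the Einstein product is associative, which the paper already states. The finite-dimensionality argument above then finishes the proof with no further difficulty.
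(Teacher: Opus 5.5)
Your proof is correct and follows the paper's route: the paper also works in the corner algebra $\{\mathcal M : \mathcal I*_N\mathcal M=\mathcal M*_N\mathcal I=\mathcal M\}$, which is the same as your set $\{\mathcal I*_N\mathcal T*_N\mathcal I\}$, and relies on Dedekind finiteness of a finite-dimensional unital algebra. The only difference is that the paper simply asserts this Dedekind finiteness, whereas you prove it, using that left multiplication by $\mathcal I*_N\mathcal Y*_N\mathcal I$ is injective and hence surjective.
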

\begin{proof}
Let $\mathcal W=\{\mathcal M\in\mathbb{C}^{I_1\times \cdot\cdot\cdot \times I_N \times I_1\times\cdot\cdot\cdot\times I_N}~~|~~\mathcal I*_N\mathcal M=\mathcal M*_N\mathcal I=\mathcal M\}$. Then $\mathcal W$ is a finite dimensional tensor algebra over $\mathbb{C}$ with unit tensor $\mathcal I$, and thus $\mathcal W$ is Dedekind finite. Note that $\mathcal I*_N\mathcal X*_N\mathcal I,~\mathcal I*_N\mathcal Y*_N\mathcal I\in\mathcal W$ and $(\mathcal I*_N\mathcal X*_N\mathcal I)*_N(\mathcal I*_N\mathcal Y*_N\mathcal I)=\mathcal I$. Therefore, $(\mathcal I*_N\mathcal Y*_N\mathcal I)*_N(\mathcal I*_N\mathcal X*_N\mathcal I)=\mathcal I$, that is, $\mathcal I*_N\mathcal Y*_N\mathcal I*_N\mathcal X*_N\mathcal I=\mathcal I$.
\end{proof}

Let $\mathcal H=\mathcal B*_N\mathcal A^{D}$ and $\mathcal K=\mathcal A^{D}*_N\mathcal C$.
\begin{lemma}\label{2.7sae}
 Let $\mathcal T=\mathcal A^{D}+\mathcal K*_N\mathcal Z^{D}*_N\mathcal H$. Then the following statements are equivalent:
 \begin{enumerate}
   \item[(1)] $\mathcal K*_N\mathcal D^{\pi}*_N\mathcal Z^{D}*_N\mathcal H
   =\mathcal K*_N\mathcal D^{D}*_N\mathcal Z^{\pi}*_N\mathcal H$;
   \item[(2)] $\mathcal S_{\mathcal A^e}*_N\mathcal T
   =\mathcal A*_N\mathcal A^{D}$;
   \item[(3)] $\mathcal T*_N\mathcal S_{\mathcal A^e}=\mathcal A*_N\mathcal A^{D}$;
   \item[(4)] $\mathcal K*_N\mathcal Z^{\pi}*_N\mathcal D^{D}*_N\mathcal H
       =\mathcal K*_N\mathcal Z^{D}*_N\mathcal D^{\pi}*_N\mathcal H$.
\end{enumerate}
Furthermore, if one of (1)--(4) holds, then $\mathcal S_{\mathcal A^e}$ has the group inverse
$$
  \mathcal S_{\mathcal A^e}^{\#}=\mathcal A^{D}+\mathcal K*_N\mathcal Z^{D}*_N\mathcal H.
$$
\end{lemma}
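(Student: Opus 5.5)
Throughout, I read $\mathcal A^e$ as the spectral idempotent $\mathcal A*_N\mathcal A^D$, so that $\mathcal S_{\mathcal A^e}=\mathcal A*_N\mathcal A^D*_N\mathcal S*_N\mathcal A*_N\mathcal A^D$. Write $\mathcal P=\mathcal A*_N\mathcal A^D$; then $\mathcal P^2=\mathcal P$. The plan has three parts: prove $(1)\Leftrightarrow(2)$ and $(3)\Leftrightarrow(4)$ by one direct computation each, link $(2)$ and $(3)$ by Dedekind finiteness (Lemma \ref{ece}), and then read off the group inverse.

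\textbf{Simplifying $\mathcal S_{\mathcal A^e}$ and $\mathcal T$.} Use the identities
\[
\mathcal P*_N\mathcal C=\mathcal A*_N\mathcal K,\qquad \mathcal B*_N\mathcal P=\mathcal H*_N\mathcal A,\qquad \mathcal P*_N\mathcal K=\mathcal K,\qquad \mathcal H*_N\mathcal P=\mathcal H .
\]
They give
\[
\mathcal S_{\mathcal A^e}=\mathcal A*_N\mathcal P-\mathcal A*_N\mathcal K*_N\mathcal D^D*_N\mathcal H*_N\mathcal A .
\]
They also give $\mathcal P*_N\mathcal T=\mathcal T*_N\mathcal P=\mathcal T$. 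The key scalar-like identity is
\[
\mathcal H*_N\mathcal A*_N\mathcal K=\mathcal B*_N\mathcal A^D*_N\mathcal C=\mathcal R=\mathcal D-\mathcal Z .
\]

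\textbf{$(1)\Leftrightarrow(2)$.} Expand $\mathcal S_{\mathcal A^e}*_N\mathcal T$. Then $\mathcal A*_N\mathcal P*_N\mathcal A^D=\mathcal P$, and the cross terms collect as
\[
\mathcal S_{\mathcal A^e}*_N\mathcal T=\mathcal P+\mathcal A*_N\mathcal K*_N\bigl[\mathcal Z^D-\mathcal D^D-\mathcal D^D*_N(\mathcal D-\mathcal Z)*_N\mathcal Z^D\bigr]*_N\mathcal H .
\]
Since $\mathcal D^D*_N\mathcal D=\mathcal I-\mathcal D^\pi$ and $\mathcal Z*_N\mathcal Z^D=\mathcal I-\mathcal Z^\pi$, the bracket equals $\mathcal D^\pi*_N\mathcal Z^D-\mathcal D^D*_N\mathcal Z^\pi$. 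Hence $(2)$ holds iff
\[
\mathcal A*_N\mathcal K*_N(\mathcal D^\pi*_N\mathcal Z^D-\mathcal D^D*_N\mathcal Z^\pi)*_N\mathcal H=0 .
\]
Multiplying on the left by $\mathcal A^D$, and using $\mathcal A^D*_N\mathcal A*_N\mathcal K=\mathcal K$, shows this is equivalent to $(1)$; the converse direction is immediate by multiplying by $\mathcal A$.

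\textbf{$(3)\Leftrightarrow(4)$.} The mirror computation for $\mathcal T*_N\mathcal S_{\mathcal A^e}$ gives the remainder
\[
\mathcal K*_N(\mathcal Z^D*_N\mathcal D^\pi-\mathcal Z^\pi*_N\mathcal D^D)*_N\mathcal H*_N\mathcal A .
\]
Multiplying on the right by $\mathcal A^D$ and using $\mathcal H*_N\mathcal A*_N\mathcal A^D=\mathcal H$ shows this vanishes iff $(4)$ holds.

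\textbf{$(2)\Leftrightarrow(3)$.} Because $\mathcal P$ is idempotent, $(2)$ can be written as
\[
\mathcal P*_N\mathcal S_{\mathcal A^e}*_N\mathcal P*_N\mathcal T*_N\mathcal P=\mathcal P .
\]
Lemma \ref{ece}, with its idempotent taken to be $\mathcal P$, then gives $\mathcal P*_N\mathcal T*_N\mathcal P*_N\mathcal S_{\mathcal A^e}*_N\mathcal P=\mathcal P$. Since $\mathcal T$ and $\mathcal S_{\mathcal A^e}$ are both fixed by $\mathcal P$ on either side, this is exactly $(3)$. The converse is symmetric.

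\textbf{Group inverse.} Assume one of (1)--(4) holds, so $\mathcal S_{\mathcal A^e}*_N\mathcal T=\mathcal T*_N\mathcal S_{\mathcal A^e}=\mathcal P$. Then
\[
\mathcal T*_N\mathcal S_{\mathcal A^e}*_N\mathcal T=\mathcal P*_N\mathcal T=\mathcal T,\qquad \mathcal S_{\mathcal A^e}*_N\mathcal T*_N\mathcal S_{\mathcal A^e}=\mathcal P*_N\mathcal S_{\mathcal A^e}=\mathcal S_{\mathcal A^e},
\]
and $\mathcal T$ commutes with $\mathcal S_{\mathcal A^e}$. These are the Drazin equations with index at most $1$, so $\mathcal S_{\mathcal A^e}^{\#}=\mathcal T$.

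The main obstacle is the bookkeeping in the cross-term expansion: one must recognise $\mathcal H*_N\mathcal A*_N\mathcal K$ as $\mathcal D-\mathcal Z$ and rewrite the result with $\mathcal D^\pi$ and $\mathcal Z^\pi$. The step $(2)\Leftrightarrow(3)$ is where the Dedekind-finiteness lemma replaces any extra hypothesis.
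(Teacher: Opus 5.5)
Your proof is correct and follows the paper's argument. You expand $\mathcal S_{\mathcal A^e}*_N\mathcal T$ and its mirror to reduce (2) to (1) and (3) to (4), apply the Dedekind-finiteness Lemma~\ref{ece} with the idempotent $\mathcal A*_N\mathcal A^D$ for (2)$\Leftrightarrow$(3), and read off the group inverse; you also make explicit the removal of the extra factor $\mathcal A$ via $\mathcal A^D*_N\mathcal A*_N\mathcal K=\mathcal K$ and $\mathcal H*_N\mathcal A*_N\mathcal A^D=\mathcal H$, which the paper leaves implicit.
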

\begin{proof}
Let $\mathcal A^e=\mathcal A*_N\mathcal A^D$ and $\mathcal Z^e=\mathcal Z*_N\mathcal Z^D$. Then
$$\mathcal S_{\mathcal A^e}*_N\mathcal T=\mathcal A^e+\mathcal A*_N\mathcal K*_N\mathcal Z^D*_N\mathcal H-\mathcal A*_N\mathcal K*_N\mathcal D^D*_N\mathcal H-\mathcal A*_N\mathcal K*_N\mathcal D^D*_N(\mathcal D-\mathcal Z)*_N\mathcal Z^D*_N\mathcal H.$$
Hence (2) holds if and only if $\mathcal A*_N\mathcal K*_N(\mathcal Z^D-\mathcal D^D-\mathcal D^D*_N(\mathcal D-\mathcal Z)*_N\mathcal Z^D)*_N\mathcal H=0$, or equivalently $\mathcal K*_N(\mathcal D^\pi*_N\mathcal Z^D-\mathcal D^D*_N\mathcal Z^\pi)*_N\mathcal H=0$, that is (1) holds. Similarly, (3) is equivalent to (4). Lemma \ref{ece} implies equivalence of (2) and (3). Moreover, (2) and (3) give
$$
\mathcal S_{A^e}^{\sharp}=\mathcal T=\mathcal A^D+\mathcal K*_N\mathcal Z^D*_N\mathcal H.
$$
\end{proof}

\section{Main results}

In this section, we present the Drazin inverse of the modified tensor $\mathcal S$ expressed in terms of the Drazin inverses of $\mathcal A$ and the generalized Schur complement $\mathcal D-\mathcal B*_N\mathcal A^{D}*_N\mathcal C$ under different restrictions.

\begin{theorem}\label{Thm3.1}
If $\mathcal S_{\mathcal A^e}*_N\mathcal T=\mathcal A*_N\mathcal A^{D}$, $\mathcal S*_N\mathcal A^\pi$ is t-nilpotent and if

(a)~$\mathcal S*_N\mathcal A^\pi*_N\mathcal Y*_N\mathcal A^e=0$, then
\begin{eqnarray}\label{3.111}
\mathcal S^{D}&=&-\sum_{i=0}^{t-2}(\mathcal I-\mathcal A^\pi*_N\mathcal Y*_N\mathcal T)*_N\mathcal T^{i+2}*_N\mathcal Y*_N\mathcal A^\pi*_N\mathcal S^i
\nonumber\\&+&\mathcal T-\mathcal A^\pi *_N\mathcal Y*_N\mathcal T^2.
\end{eqnarray}

(b)~$\mathcal S*_N\mathcal A^e*_N\mathcal Y*_N\mathcal A^\pi=0$, then
\begin{eqnarray*}
\mathcal S^{D}&=&-\sum_{i=0}^{t-1}\mathcal S^{i}*_N\mathcal A^\pi*_N\mathcal Y*_N\mathcal T^{i+2}+\mathcal T.
\end{eqnarray*}
\end{theorem}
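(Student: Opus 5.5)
The plan is to feed Lemma \ref{2.7sae} into Lemma \ref{2.2}. The hypothesis $\mathcal S_{\mathcal A^e}*_N\mathcal T=\mathcal A*_N\mathcal A^{D}$ is statement (2) of Lemma \ref{2.7sae}. By that lemma, $\mathcal S_{\mathcal A^e}$ is group invertible and
\[
\mathcal S_{\mathcal A^e}^{D}=\mathcal S_{\mathcal A^e}^{\sharp}=\mathcal T=\mathcal A^{D}+\mathcal K*_N\mathcal Z^{D}*_N\mathcal H.
\]
The hypotheses that $\mathcal S*_N\mathcal A^\pi$ is $t$-nilpotent and that (a) or (b) holds are exactly those of Lemma \ref{2.2}(a) or (b). So $\mathcal S^D$ is given by the formulas there, and it remains to substitute $\mathcal T$ for $\mathcal S_{\mathcal A^e}^{D}$ and rewrite.

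For part (b) the substitution is immediate. Lemma \ref{2.2}(b) becomes
\[
\mathcal S^D=-\sum_{i=0}^{t-1}\mathcal S^i*_N\mathcal A^\pi*_N\mathcal Y*_N\mathcal T^{i+2}+\mathcal T,
\]
which is the claimed formula.

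For part (a), we get $(\mathcal S_{\mathcal A^e}^{D})^{i+2}=\mathcal T^{i+2}$ and $(\mathcal S_{\mathcal A^e}^{D})^{i+3}=\mathcal T^{i+3}$. Each summand of Lemma \ref{2.2}(a) then has the form
\[
\bigl[\mathcal T^{i+2}-\mathcal A^\pi*_N\mathcal Y*_N\mathcal T^{i+3}\bigr]*_N\mathcal Y*_N\mathcal A^\pi*_N\mathcal S^i .
\]
Factoring $\mathcal T^{i+2}$ on the right of the bracket gives $(\mathcal I-\mathcal A^\pi*_N\mathcal Y*_N\mathcal T)*_N\mathcal T^{i+2}$, because $\mathcal T^{i+3}=\mathcal T*_N\mathcal T^{i+2}$. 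The tail term $\mathcal S_{\mathcal A^e}^D-\mathcal A^\pi*_N\mathcal Y*_N(\mathcal S_{\mathcal A^e}^D)^2$ becomes $\mathcal T-\mathcal A^\pi*_N\mathcal Y*_N\mathcal T^2$. Together these give \eqref{3.111}.

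The only point needing care is justifying that $\mathcal T$ is really the Drazin inverse of $\mathcal S_{\mathcal A^e}$, and not merely a one-sided inverse relative to $\mathcal A^e$. This is what Lemma \ref{2.7sae} provides, via Lemma \ref{ece}: (2) implies (3), so both $\mathcal S_{\mathcal A^e}*_N\mathcal T$ and $\mathcal T*_N\mathcal S_{\mathcal A^e}$ equal $\mathcal A^e$. Since $\mathcal T=\mathcal A^e*_N\mathcal T*_N\mathcal A^e$ (as $\mathcal K=\mathcal A^e*_N\mathcal K$ and $\mathcal H=\mathcal H*_N\mathcal A^e$), we get $\mathcal T*_N\mathcal S_{\mathcal A^e}*_N\mathcal T=\mathcal T$ and $\mathcal S_{\mathcal A^e}*_N\mathcal T*_N\mathcal S_{\mathcal A^e}=\mathcal S_{\mathcal A^e}$. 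With commutativity this shows $\mathcal T$ is the group inverse. Beyond this, the proof is direct substitution.
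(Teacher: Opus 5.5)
Your proposal is correct and matches the paper's proof, which likewise obtains the result by identifying $\mathcal S_{\mathcal A^e}^D=\mathcal T$ via Lemma \ref{2.7sae} and substituting into Lemma \ref{2.2}. Your additional check that $\mathcal T=\mathcal A^e*_N\mathcal T*_N\mathcal A^e$ (so $\mathcal T$ is a genuine group inverse) fills in a detail the paper's proof of Lemma \ref{2.7sae} leaves implicit.
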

\begin{proof}
By Lemma \ref{2.2} and Lemma \ref{2.7sae}, the proof follows directly.
\end{proof}

Note that Theorem \ref{Thm3.1} $(a)$ extends \cite[Theorem 2.5 $(a)$]{Dijanaf} and \cite[Theorem 2.5]{Daochangf}, while $(b)$ generalizes \cite[Theorem 2.5 $(b)$]{Dijanaf} and \cite[Theorem 2.9]{Daochangf}. Also, the above theorem can generalize and unify some results, which are listed partially as follows:
\begin{enumerate}
  \item[(1)] $\mathcal A^{\pi}*_N\mathcal C=0,~\mathcal B*_N\mathcal A^{\pi}=0,
      ~\mathcal C*_N\mathcal D^{\pi}*_N\mathcal Z^{d}*_N\mathcal B=0,~\mathcal C*_N\mathcal D^{d}*_N\mathcal Z^{\pi}*_N\mathcal B=0,~\mathcal C*_N\mathcal Z^{d}*_N\mathcal D^{\pi}*_N\mathcal B=0$
          and $\mathcal C*_N\mathcal Z^{\pi}*_N\mathcal D^{d}*_N\mathcal B=0$ (see \cite[Theorem 2.1]{daochang10});
  \item[(2)] $\mathcal A^{\pi}*_N\mathcal C=0, ~\mathcal C*_N\mathcal D^{\pi}=0, ~\mathcal Z^{\pi}*_N\mathcal B=0,~\mathcal D^{\pi}*_N\mathcal B=0$ and $\mathcal C*_N\mathcal Z^{\pi}=0$ (see \cite[Theorem 2.1]{3});

  \item[(3)] $\mathcal A^{\pi}*_N\mathcal C=0,~\mathcal C*_N\mathcal D^{\pi}*_N\mathcal Z^{d}*_N\mathcal B=0,~\mathcal C*_N\mathcal D^{d}*_N\mathcal Z^{\pi}*_N\mathcal B=0,~\mathcal C*_N\mathcal Z^{d}*_N\mathcal D^{\pi}*_N\mathcal B=0$ and $\mathcal C*_N\mathcal Z^{\pi}*_N\mathcal D^{d}*_N\mathcal B=0$ (see \cite[Theorem 2.1]{10});
  \item[(4)] $\mathcal A^{\pi}*_N\mathcal C=\mathcal C*_N\mathcal D^{\pi},~\mathcal D^{\pi}*_N\mathcal B=0$ and $\mathcal D*_N\mathcal Z^{\pi}=0$ (see \cite[Theorem 2.1]{9});
  \item[(5)] $\mathcal A^{\pi}*_N\mathcal C=\mathcal C*_N\mathcal D^{\pi},~\mathcal D^{\pi}*_N\mathcal B=0,~\mathcal Z^{\pi}*_N\mathcal B=0$ and $\mathcal C*_N\mathcal Z^{\pi}=0$ (see \cite[Theorem 2.2]{9}).
\end{enumerate}

Now, we give another two different expressions of $\mathcal S^D$ in \eqref{3.111}.
\begin{remark}\label{remarka}
From \eqref{3.111}, we observe that
\begin{eqnarray}\label{remark111}
\mathcal T*_N\mathcal S*_N\mathcal A^\pi&=&(\mathcal A^D+\mathcal K*_N\mathcal Z^D*_N\mathcal H)*_N(\mathcal A-\mathcal C*_N\mathcal D^D*_N\mathcal B)*_N\mathcal A^\pi
\nonumber\\&=&(\mathcal A^D*_N\mathcal A-\mathcal K*_N\mathcal D^D*_N\mathcal B-\mathcal K*_N\mathcal Z^D*_N(\mathcal D-\mathcal Z)*_N\mathcal D^D*_N\mathcal B\nonumber\\&+&\mathcal K*_N\mathcal Z^D*_N\mathcal B*_N\mathcal A^D*_N\mathcal A)*_N\mathcal A^\pi
\nonumber\\&=&-\mathcal K*_N\mathcal Z^\pi*_N\mathcal D^D*_N\mathcal B*_N\mathcal A^\pi-\mathcal K*_N\mathcal Z^D*_N(\mathcal I-\mathcal D^\pi)*_N\mathcal B*_N\mathcal A^\pi
\nonumber\\&=&\mathcal K*_N(\mathcal Z^D*_N\mathcal D^\pi-\mathcal Z^\pi*_N\mathcal D^D)*_N\mathcal B*_N\mathcal A^\pi-\mathcal K*_N\mathcal Z^D*_N\mathcal B*_N\mathcal A^\pi.
\end{eqnarray}
From the condition $\mathcal S_{\mathcal A^e}*_N\mathcal T=\mathcal A*_N\mathcal A^{D}$ given in the Theorem \ref{Thm3.1}, by Lemma \ref{2.7sae}, we obtain
$$
\mathcal K*_N(\mathcal Z^D*_N\mathcal D^\pi-\mathcal Z^\pi*_N\mathcal D^D)*_N\mathcal B*_N\mathcal A^\pi=\mathcal K*_N(\mathcal Z^D*_N\mathcal D^\pi-\mathcal Z^\pi*_N\mathcal D^D)*_N\mathcal B.
$$
Thus \eqref{3.111} can be rewritten as
\begin{eqnarray}\label{remark1}
\mathcal S^{D}&=&\sum_{i=0}^{t-2}(\mathcal I-\mathcal A^\pi*_N\mathcal Y*_N\mathcal T)*_N\mathcal T^{i+1}*_N\mathcal K*_N(\mathcal Z^D*_N\mathcal D^\pi-\mathcal Z^\pi*_N\mathcal D^D)*_N\mathcal B*_N\mathcal S^i
\nonumber\\&-&\sum_{i=0}^{t-2}(\mathcal I-\mathcal A^\pi*_N\mathcal Y*_N\mathcal T)*_N\mathcal T^{i+1}*_N\mathcal K*_N\mathcal Z^D*_N\mathcal B*_N\mathcal A^\pi*_N\mathcal S^i
\nonumber\\&+&\mathcal T-\mathcal A^\pi *_N\mathcal Y*_N\mathcal T^2.
\end{eqnarray}
Moreover, note that \eqref{remark111} can be expressed equivalently as
\begin{eqnarray*}\label{tsa}
\mathcal T*_N\mathcal S*_N\mathcal A^\pi&=&\mathcal K*_N(\mathcal Z^D*_N\mathcal D^\pi-\mathcal Z^\pi*_N\mathcal D^D)*_N\mathcal B*_N\mathcal A^\pi-\mathcal K*_N\mathcal Z^D*_N\mathcal B*_N\mathcal A^\pi
\\&=&-\mathcal K*_N(\mathcal Z^D*_N\mathcal D+\mathcal Z^\pi)*_N\mathcal D^D*_N\mathcal B*_N\mathcal A^\pi.
\end{eqnarray*}
Furthermore, we can obtain an alternative equivalent representation of \eqref{remark1}:
\begin{eqnarray}\label{remark222}
\mathcal S^{D}&=&-\sum_{i=0}^{t-2}(\mathcal I-\mathcal A^\pi*_N\mathcal Y*_N\mathcal T)*_N\mathcal T^{i+1}*_N\mathcal K*_N(\mathcal Z^D*_N\mathcal D+\mathcal Z^\pi)*_N\mathcal D^D*_N\mathcal B*_N\mathcal A^\pi*_N\mathcal S^i
\nonumber\\&+&\mathcal T-\mathcal A^\pi *_N\mathcal Y*_N\mathcal T^2.
\end{eqnarray}
\end{remark}

Note that Lemma \ref{2.7sae} implies $\mathcal K*_N\mathcal D^{\pi}*_N\mathcal Z^{D}*_N\mathcal H
   =\mathcal K*_N\mathcal D^{D}*_N\mathcal Z^{\pi}*_N\mathcal H$ and $\mathcal S_{\mathcal A^e}*_N\mathcal T
   =\mathcal A*_N\mathcal A^{D}$ are equivalent. Strengthening $\mathcal K*_N\mathcal D^{\pi}*_N\mathcal Z^{D}*_N\mathcal H
   =\mathcal K*_N\mathcal D^{D}*_N\mathcal Z^{\pi}*_N\mathcal H$ reveals that Corollary \ref{cor3.4} $(a)$ generalizes  \cite[Corollary 2.7]{Daochangf}, while equation $(b)$ is the dual of $(a)$.
\begin{corollary}\label{cor3.4}
If $\mathcal C*_N\mathcal D^\pi*_N\mathcal Z^D*_N\mathcal B=0,~\mathcal C*_N\mathcal D^D*_N\mathcal Z^\pi*_N\mathcal B=0$, $\mathcal S*_N\mathcal A^\pi$ is t-nilpotent and if

(a)~$\mathcal S*_N\mathcal A^\pi*_N\mathcal Y*_N\mathcal A^e=0$, then
\begin{eqnarray*}
\mathcal S^{D}&=&-\sum_{i=0}^{t-2}(\mathcal I-\mathcal A^\pi*_N\mathcal Y*_N\mathcal T)*_N\mathcal T^{i+2}*_N\mathcal Y*_N\mathcal A^\pi*_N\mathcal S^i
\nonumber\\&+&\mathcal T-\mathcal A^\pi *_N\mathcal Y*_N\mathcal T^2.
\end{eqnarray*}

(b)~$\mathcal S*_N\mathcal A^e*_N\mathcal Y*_N\mathcal A^\pi=0$, then
\begin{eqnarray*}
\mathcal S^{D}&=&-\sum_{i=0}^{t-1}\mathcal S^{i}*_N\mathcal A^\pi*_N\mathcal Y*_N\mathcal T^{i+2}+\mathcal T.
\end{eqnarray*}
\end{corollary}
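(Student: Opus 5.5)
The plan is to reduce Corollary \ref{cor3.4} to Theorem \ref{Thm3.1}. The two formulas in (a) and (b) coincide exactly with those in Theorem \ref{Thm3.1}(a) and (b), and the hypotheses on $\mathcal S*_N\mathcal A^\pi$ and on $\mathcal S*_N\mathcal A^\pi*_N\mathcal Y*_N\mathcal A^e$ (respectively $\mathcal S*_N\mathcal A^e*_N\mathcal Y*_N\mathcal A^\pi$) are the same. So all that must be checked is that the two vanishing conditions
$$\mathcal C*_N\mathcal D^\pi*_N\mathcal Z^D*_N\mathcal B=0,\qquad \mathcal C*_N\mathcal D^D*_N\mathcal Z^\pi*_N\mathcal B=0$$
imply the remaining hypothesis $\mathcal S_{\mathcal A^e}*_N\mathcal T=\mathcal A*_N\mathcal A^D$.

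For this I will use Lemma \ref{2.7sae}, by which $\mathcal S_{\mathcal A^e}*_N\mathcal T=\mathcal A*_N\mathcal A^D$ is equivalent to its condition (1):
$$\mathcal K*_N\mathcal D^{\pi}*_N\mathcal Z^{D}*_N\mathcal H=\mathcal K*_N\mathcal D^{D}*_N\mathcal Z^{\pi}*_N\mathcal H.$$
Recall $\mathcal K=\mathcal A^D*_N\mathcal C$ and $\mathcal H=\mathcal B*_N\mathcal A^D$. Then
$$\mathcal K*_N\mathcal D^{\pi}*_N\mathcal Z^{D}*_N\mathcal H=\mathcal A^D*_N(\mathcal C*_N\mathcal D^\pi*_N\mathcal Z^D*_N\mathcal B)*_N\mathcal A^D=0,$$
and similarly
$$\mathcal K*_N\mathcal D^{D}*_N\mathcal Z^{\pi}*_N\mathcal H=\mathcal A^D*_N(\mathcal C*_N\mathcal D^D*_N\mathcal Z^\pi*_N\mathcal B)*_N\mathcal A^D=0.$$
Both sides of condition (1) are therefore zero, so condition (1) holds. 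Lemma \ref{2.7sae} then gives $\mathcal S_{\mathcal A^e}*_N\mathcal T=\mathcal A*_N\mathcal A^D$, and moreover $\mathcal S_{\mathcal A^e}^{\#}=\mathcal T$.

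With this established, all hypotheses of Theorem \ref{Thm3.1} are satisfied. Case (a) follows from Theorem \ref{Thm3.1}(a), and case (b) from Theorem \ref{Thm3.1}(b). Equivalently, one can substitute $\mathcal S_{\mathcal A^e}^D=\mathcal T$ directly into Lemma \ref{2.2}(a),(b).

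There is no genuine obstacle here: the given conditions are strictly stronger than condition (1), since they kill both sides, and the rest is inherited. The only point needing care is the associativity bookkeeping that moves $\mathcal A^D$ outside. This relies only on associativity of the Einstein product and on the tensor sizes being compatible, which they are by the standing conventions on $\mathcal A,\mathcal B,\mathcal C,\mathcal D$.
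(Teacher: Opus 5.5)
Your proposal is correct and follows the paper's route: the two vanishing hypotheses make both sides of condition (1) in Lemma~\ref{2.7sae} equal to zero, which gives $\mathcal S_{\mathcal A^e}*_N\mathcal T=\mathcal A*_N\mathcal A^D$, and then Theorem~\ref{Thm3.1}(a),(b) applies verbatim. This is exactly the ``strengthening'' reduction the paper describes just before stating the corollary, for which it gives no separate proof.
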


Corollary \ref{cor3.5} directly follows from Theorem \ref{Thm3.1} or Corollary \ref{cor3.4}, wherein $(a)$ recovering \cite[Corollary 2.7]{Daochangf}. Furthermore, the second equation in $(a)$ is derived using Remark \ref{remarka}.
\begin{corollary}\label{cor3.5}
If $\mathcal C*_N\mathcal D^\pi*_N\mathcal Z^D*_N\mathcal B=0,~\mathcal C*_N\mathcal D^D*_N\mathcal Z^\pi*_N\mathcal B=0$ and if

(a)~$\mathcal A^\pi*_N\mathcal Y=0$, then
\begin{eqnarray*}
\mathcal S^{D}&=&-\sum_{i=0}^{k-1}\mathcal T^{i+2}*_N\mathcal Y*_N\mathcal A^\pi*_N\mathcal A^i
+\mathcal T,
\end{eqnarray*} or alternatively
\begin{eqnarray*}
\mathcal S^{D}&=&\sum_{i=0}^{k-1}\mathcal T^{i+1}*_N\mathcal K*_N(\mathcal Z^D*_N\mathcal D^\pi-\mathcal Z^\pi*_N\mathcal D^D)*_N\mathcal B*_N\mathcal A^i
\nonumber\\&-&\sum_{i=0}^{k-1}\mathcal T^{i+1}*_N\mathcal K*_N\mathcal Z^D*_N\mathcal B*_N\mathcal A^\pi*_N\mathcal A^i
+\mathcal T,
\end{eqnarray*}
where $\ind(\mathcal A)=k$.

(b)~$\mathcal Y*_N\mathcal A^\pi=0$, then
\begin{eqnarray*}
\mathcal S^{D}&=&\mathcal T-\sum_{i=0}^{k-1}\mathcal A^{i}*_N\mathcal A^\pi*_N\mathcal Y*_N\mathcal T^{i+2},
\end{eqnarray*}
where $\ind(\mathcal A)=k$.
\end{corollary}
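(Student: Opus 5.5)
The plan is to deduce Corollary \ref{cor3.5} from Theorem \ref{Thm3.1} by checking its hypotheses and collapsing the resulting formulas. The two vanishing conditions give $\mathcal K*_N\mathcal D^\pi*_N\mathcal Z^D*_N\mathcal H=\mathcal A^D*_N(\mathcal C*_N\mathcal D^\pi*_N\mathcal Z^D*_N\mathcal B)*_N\mathcal A^D=0$, and likewise $\mathcal K*_N\mathcal D^D*_N\mathcal Z^\pi*_N\mathcal H=0$. Hence condition (1) of Lemma \ref{2.7sae} holds, so $\mathcal S_{\mathcal A^e}*_N\mathcal T=\mathcal A*_N\mathcal A^D$ and $\mathcal S_{\mathcal A^e}^{\sharp}=\mathcal T$.

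\textbf{Part (a).} Assume $\mathcal A^\pi*_N\mathcal Y=0$. First, $\mathcal S*_N\mathcal A^\pi=\mathcal A*_N\mathcal A^\pi-\mathcal Y*_N\mathcal A^\pi$. Since $\mathcal A^\pi$ commutes with $\mathcal A$ and $\mathcal A^\pi*_N\mathcal Y=0$, induction gives
$$(\mathcal S*_N\mathcal A^\pi)^n=\mathcal A^{n}*_N\mathcal A^\pi-\mathcal A^{n-1}*_N\mathcal A^\pi*_N\mathcal Y*_N\mathcal A^\pi .$$
The second term vanishes because $\mathcal A^\pi*_N\mathcal Y=0$, so $(\mathcal S*_N\mathcal A^\pi)^n=\mathcal A^n*_N\mathcal A^\pi$. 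This is zero for $n\ge k=\ind(\mathcal A)$, since $\mathcal A^k*_N\mathcal A^\pi=0$. So $\mathcal S*_N\mathcal A^\pi$ is $k$-nilpotent (if $k=0$, then $\mathcal A^\pi=0$ and everything is trivial).

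Next, the side condition $\mathcal S*_N\mathcal A^\pi*_N\mathcal Y*_N\mathcal A^e=0$ of Theorem \ref{Thm3.1}(a) holds trivially. Applying the theorem with $t=k$, every occurrence of $\mathcal A^\pi*_N\mathcal Y$ drops out, so $\mathcal I-\mathcal A^\pi*_N\mathcal Y*_N\mathcal T$ becomes $\mathcal I$ and $\mathcal A^\pi*_N\mathcal Y*_N\mathcal T^2$ becomes $0$. We are left with
$$\mathcal S^D=\mathcal T-\sum_{i=0}^{k-2}\mathcal T^{i+2}*_N\mathcal Y*_N\mathcal A^\pi*_N\mathcal S^i .$$

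The key simplification is to replace $\mathcal A^\pi*_N\mathcal S^i$ by $\mathcal A^\pi*_N\mathcal A^i$. Since $\mathcal A^\pi*_N\mathcal S=\mathcal A^\pi*_N\mathcal A-\mathcal A^\pi*_N\mathcal Y=\mathcal A*_N\mathcal A^\pi$, induction gives $\mathcal A^\pi*_N\mathcal S^i=\mathcal A^i*_N\mathcal A^\pi$. The sum runs to $k-2$, whereas the stated formula runs to $k-1$; the extra term $i=k-1$ contains $\mathcal A^{k-1}*_N\mathcal A^\pi$, which need not vanish. This index bookkeeping is the step I expect to be the main obstacle. The fix is to keep $t=k+1$, which is also a valid nilpotency bound, so the sum runs to $k-1$; the $i=k$ term would vanish anyway. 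With that choice the first formula in (a) follows.

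The second formula in (a) comes from Remark \ref{remarka}. Use \eqref{remark1} with the $\mathcal A^\pi*_N\mathcal Y$ terms removed, and substitute $\mathcal T*_N\mathcal Y*_N\mathcal A^\pi=-\mathcal T*_N\mathcal S*_N\mathcal A^\pi+\mathcal T*_N\mathcal A*_N\mathcal A^\pi$. The last term is zero because $\mathcal T*_N\mathcal A^\pi=0$: both $\mathcal A^D*_N\mathcal A^\pi=0$ and $\mathcal H*_N\mathcal A^\pi=0$. Then expand $\mathcal T*_N\mathcal S*_N\mathcal A^\pi$ via \eqref{remark111}, using the identity from Lemma \ref{2.7sae} that lets $\mathcal A^\pi$ be dropped from the first bracket.

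\textbf{Part (b).} This is dual. If $\mathcal Y*_N\mathcal A^\pi=0$, then $\mathcal S*_N\mathcal A^\pi=\mathcal A*_N\mathcal A^\pi$, which is $k$-nilpotent. The condition $\mathcal S*_N\mathcal A^e*_N\mathcal Y*_N\mathcal A^\pi=0$ holds trivially. Finally, $\mathcal S^i*_N\mathcal A^\pi=\mathcal A^i*_N\mathcal A^\pi$ by induction, since $\mathcal S*_N\mathcal A^\pi=\mathcal A^\pi*_N\mathcal A$. Substituting into Theorem \ref{Thm3.1}(b) with $t=k$ gives the stated formula.
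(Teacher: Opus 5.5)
Your overall route matches the paper's. The two vanishing hypotheses give condition (1) of Lemma~\ref{2.7sae}, so $\mathcal S_{\mathcal A^e}^{\sharp}=\mathcal T$. Theorem~\ref{Thm3.1} is then specialized, and the second formula in (a) comes from Remark~\ref{remarka}. Part (b) and your derivation of the alternative formula in (a) are correct.

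The flaw is the nilpotency step in part (a). Your formula $(\mathcal S*_N\mathcal A^\pi)^n=\mathcal A^{n}*_N\mathcal A^\pi-\mathcal A^{n-1}*_N\mathcal A^\pi*_N\mathcal Y*_N\mathcal A^\pi$ already fails for $n=1$. It gives $\mathcal S*_N\mathcal A^\pi=\mathcal A*_N\mathcal A^\pi$, whereas in fact $\mathcal S*_N\mathcal A^\pi=\mathcal A*_N\mathcal A^\pi-\mathcal Y*_N\mathcal A^\pi$, and the hypothesis $\mathcal A^\pi*_N\mathcal Y=0$ says nothing about $\mathcal Y*_N\mathcal A^\pi$. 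The correct computation uses $\mathcal A^\pi*_N\mathcal S=\mathcal A*_N\mathcal A^\pi$:
\[
(\mathcal S*_N\mathcal A^\pi)^n=\mathcal S*_N(\mathcal A^\pi*_N\mathcal S)^{n-1}*_N\mathcal A^\pi=\mathcal A^{n}*_N\mathcal A^\pi-\mathcal Y*_N\mathcal A^{n-1}*_N\mathcal A^\pi .
\]
So $\mathcal S*_N\mathcal A^\pi$ is $(k+1)$-nilpotent, but in general not $k$-nilpotent, since $(\mathcal S*_N\mathcal A^\pi)^k=-\mathcal Y*_N\mathcal A^{k-1}*_N\mathcal A^\pi$.

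Here is a concrete check with $N=1$. Take $\mathcal A=\begin{bmatrix}1&0\\0&0\end{bmatrix}$, $\mathcal C=\begin{bmatrix}1\\0\end{bmatrix}$, $\mathcal B=\begin{bmatrix}0&1\end{bmatrix}$ and $\mathcal D=[1]$. Then $\mathcal Z=[1]$, $\mathcal T=\mathcal A$, $k=1$, and all hypotheses of (a) hold. However, $\mathcal S*_N\mathcal A^\pi=\begin{bmatrix}0&-1\\0&0\end{bmatrix}\neq 0$. Here $\mathcal S$ is idempotent, so $\mathcal S^D=\mathcal S$, while Theorem~\ref{Thm3.1}(a) with $t=k=1$ would return $\mathcal T=\mathcal A$.

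Your fix, taking $t=k+1$, is therefore not coherent as argued. If $\mathcal S*_N\mathcal A^\pi$ really were $k$-nilpotent, Theorem~\ref{Thm3.1}(a) with $t=k$ would be valid. Uniqueness of the Drazin inverse would then force the $i=k-1$ term you worry about to vanish. The choice $t=k+1$ is not optional bookkeeping that happens to be allowed by a weaker bound; it is forced, because the nilpotency index genuinely is $k+1$.

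Once you replace your power computation by the correct one above, the rest goes through. Taking $t=k+1$ together with $\mathcal A^\pi*_N\mathcal S^i=\mathcal A^i*_N\mathcal A^\pi$ yields exactly the first formula in (a), and the rest of your argument stands.

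Alternatively, (a) follows with no index adjustment from Theorem~\ref{Thm3.2}(b) with $r=k$. This works because $\mathcal A^\pi*_N\mathcal S=\mathcal A*_N\mathcal A^\pi$ is $k$-nilpotent and $\mathcal A^\pi*_N\mathcal Y*_N\mathcal A^e*_N\mathcal S=0$.
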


In order to simplify \eqref{remark1}, we strengthen the condition of Corollary \ref{cor3.4} and assume that $\mathcal D^\pi=\mathcal Z^\pi$ to obtain the following corollary, which generalizes \cite[Corollary 2.8]{Daochangf} and  \cite[Corollary 3.2 $(a)$]{Dijanaf}.
\begin{corollary}
If $\mathcal S*_N\mathcal A^\pi*_N\mathcal Y*_N\mathcal A^e=0$, $\mathcal S*_N\mathcal A^\pi$ is t-nilpotent and $\mathcal D^\pi=\mathcal Z^\pi$, then
\begin{eqnarray*}
\mathcal S^{D}&=&-\sum_{i=0}^{t-2}(\mathcal I-\mathcal A^\pi*_N\mathcal Y*_N\mathcal T)*_N\mathcal T^{i+1}*_N\mathcal K*_N\mathcal Z^D*_N\mathcal B*_N\mathcal A^\pi*_N\mathcal S^i
\nonumber\\&+&\mathcal T-\mathcal A^\pi *_N\mathcal Y*_N\mathcal T^2.
\end{eqnarray*}
\end{corollary}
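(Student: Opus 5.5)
The plan is to reduce the statement to Theorem \ref{Thm3.1}(a) and then simplify, in the same way as Remark \ref{remarka} was derived.

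\textbf{Step 1: recover the hypothesis of Theorem \ref{Thm3.1}.} By Lemma \ref{2.7sae}, the condition $\mathcal S_{\mathcal A^e}*_N\mathcal T=\mathcal A*_N\mathcal A^D$ is equivalent to
\[
\mathcal K*_N\mathcal D^\pi*_N\mathcal Z^D*_N\mathcal H=\mathcal K*_N\mathcal D^D*_N\mathcal Z^\pi*_N\mathcal H .
\]
Assume $\mathcal D^\pi=\mathcal Z^\pi$. Then $\mathcal D^\pi*_N\mathcal Z^D=\mathcal Z^\pi*_N\mathcal Z^D=0$ and $\mathcal D^D*_N\mathcal Z^\pi=\mathcal D^D*_N\mathcal D^\pi=0$, so both sides vanish. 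In fact the stronger conditions of Corollary \ref{cor3.4}, namely $\mathcal C*_N\mathcal D^\pi*_N\mathcal Z^D*_N\mathcal B=0$ and $\mathcal C*_N\mathcal D^D*_N\mathcal Z^\pi*_N\mathcal B=0$, hold. Hence $\mathcal S_{\mathcal A^e}^{\sharp}=\mathcal T$. Together with the nilpotency of $\mathcal S*_N\mathcal A^\pi$ and $\mathcal S*_N\mathcal A^\pi*_N\mathcal Y*_N\mathcal A^e=0$, Theorem \ref{Thm3.1}(a) (equivalently Corollary \ref{cor3.4}(a)) gives formula \eqref{3.111}.

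\textbf{Step 2: rewrite the sum.} Write $\mathcal T^{i+2}*_N\mathcal Y*_N\mathcal A^\pi=\mathcal T^{i+1}*_N(\mathcal T*_N\mathcal Y*_N\mathcal A^\pi)$ and relate $\mathcal T*_N\mathcal Y*_N\mathcal A^\pi$ to $\mathcal T*_N\mathcal S*_N\mathcal A^\pi$. Since $\mathcal S=\mathcal A-\mathcal Y$ and $\mathcal T=\mathcal A^e*_N\mathcal T$, we get $\mathcal T*_N\mathcal A*_N\mathcal A^\pi=0$, so
\[
\mathcal T*_N\mathcal Y*_N\mathcal A^\pi=-\mathcal T*_N\mathcal S*_N\mathcal A^\pi .
\]
Now apply the computation \eqref{remark111} from Remark \ref{remarka}:
\[
\mathcal T*_N\mathcal S*_N\mathcal A^\pi=\mathcal K*_N(\mathcal Z^D*_N\mathcal D^\pi-\mathcal Z^\pi*_N\mathcal D^D)*_N\mathcal B*_N\mathcal A^\pi-\mathcal K*_N\mathcal Z^D*_N\mathcal B*_N\mathcal A^\pi .
\]
Under $\mathcal D^\pi=\mathcal Z^\pi$ the middle factor vanishes, because $\mathcal Z^D*_N\mathcal Z^\pi=0$ and $\mathcal D^\pi*_N\mathcal D^D=0$. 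Thus $\mathcal T*_N\mathcal S*_N\mathcal A^\pi=-\mathcal K*_N\mathcal Z^D*_N\mathcal B*_N\mathcal A^\pi$, and therefore
\[
\mathcal T^{i+2}*_N\mathcal Y*_N\mathcal A^\pi=\mathcal T^{i+1}*_N\mathcal K*_N\mathcal Z^D*_N\mathcal B*_N\mathcal A^\pi .
\]

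\textbf{Step 3: substitute.} Put this into \eqref{3.111}, keeping the prefactor $(\mathcal I-\mathcal A^\pi*_N\mathcal Y*_N\mathcal T)$ and the tail $\mathcal T-\mathcal A^\pi*_N\mathcal Y*_N\mathcal T^2$ unchanged. This yields exactly the stated formula. Equivalently, one can start from \eqref{remark1} and drop the first sum.

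\textbf{Main obstacle.} The delicate point is the sign bookkeeping in Step 2 and the check that $\mathcal T*_N\mathcal A*_N\mathcal A^\pi=0$. The identity \eqref{remark111} was derived without invoking the hypothesis, so it can be used directly. I would verify each cancellation ($\mathcal Z^D*_N\mathcal Z^\pi=0$, $\mathcal D^\pi*_N\mathcal D^D=0$, and the commutation of $\mathcal D^D$ and $\mathcal D^\pi$) term by term.
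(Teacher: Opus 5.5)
Your proposal is correct and follows the paper's route. The paper obtains this corollary by noting that $\mathcal D^\pi=\mathcal Z^\pi$ gives $\mathcal Z^D*_N\mathcal D^\pi-\mathcal Z^\pi*_N\mathcal D^D=0$ and also the hypothesis of Lemma \ref{2.7sae}, so the first sum in \eqref{remark1} drops out of the Theorem \ref{Thm3.1}(a) formula; this is exactly your Steps 1--3. One small fix: $\mathcal T*_N\mathcal A*_N\mathcal A^\pi=0$ follows from the right-sided identity $\mathcal T=\mathcal T*_N\mathcal A^e$ (since $\mathcal T$ ends in $\mathcal A^D$), not from $\mathcal T=\mathcal A^e*_N\mathcal T$ as you wrote.
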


On the other hand, let $\mathcal K*_N(\mathcal Z^D*_N\mathcal D+\mathcal Z^\pi)*_N\mathcal D^D*_N\mathcal B*_N\mathcal A^\pi=0$ given in \eqref{tsa}, we observe that
$
\mathcal K*_N\mathcal Z^D*_N\mathcal B*_N\mathcal A^\pi=\mathcal K*_N(\mathcal Z^D*_N\mathcal D^\pi-\mathcal Z^\pi*_N\mathcal D^D)*_N\mathcal B*_N\mathcal A^\pi.
$
Then, we continue to strengthen the condition to obtain $\mathcal Z^D=\mathcal Z^D*_N\mathcal D^\pi-\mathcal Z^\pi*_N\mathcal D^D$. According to the representation of $\mathcal S^D$ (see \eqref{remark222}), we derive the following corollary.
\begin{corollary}
If $\mathcal S*_N\mathcal A^\pi*_N\mathcal Y*_N\mathcal A^e=0$, $\mathcal S*_N\mathcal A^\pi$ is t-nilpotent and $\mathcal Z^D=\mathcal Z^D*_N\mathcal D^\pi-\mathcal Z^\pi*_N\mathcal D^D$, then
\begin{eqnarray*}
\mathcal S^{D}&=&\mathcal T-\mathcal A^\pi *_N\mathcal Y*_N\mathcal T^2.
\end{eqnarray*}
\end{corollary}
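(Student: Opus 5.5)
We will obtain this corollary from formula \eqref{remark222} of Remark \ref{remarka}, which is the rewritten form of Theorem \ref{Thm3.1}(a). The idea is to show that the hypothesis $\mathcal Z^D=\mathcal Z^D*_N\mathcal D^\pi-\mathcal Z^\pi*_N\mathcal D^D$ kills the factor $(\mathcal Z^D*_N\mathcal D+\mathcal Z^\pi)*_N\mathcal D^D$ in every summand. Then only $\mathcal T-\mathcal A^\pi*_N\mathcal Y*_N\mathcal T^2$ survives.

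\textbf{Step 1: split the hypothesis into two identities.} Multiply the hypothesis on the left by $\mathcal Z*_N\mathcal Z^D$. Since $\mathcal Z*_N\mathcal Z^D*_N\mathcal Z^D=\mathcal Z^D$ and $\mathcal Z*_N\mathcal Z^D*_N\mathcal Z^\pi=0$, this gives $\mathcal Z^D=\mathcal Z^D*_N\mathcal D^\pi$. Substituting back into the hypothesis then yields $\mathcal Z^\pi*_N\mathcal D^D=0$. From the first identity we also get $\mathcal Z^D*_N\mathcal D*_N\mathcal D^D=\mathcal Z^D-\mathcal Z^D*_N\mathcal D^\pi=0$.

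\textbf{Step 2: the factor vanishes.} By Step 1,
$$(\mathcal Z^D*_N\mathcal D+\mathcal Z^\pi)*_N\mathcal D^D=\mathcal Z^D*_N\mathcal D*_N\mathcal D^D+\mathcal Z^\pi*_N\mathcal D^D=0.$$
By the computation preceding \eqref{remark222}, this means $\mathcal T*_N\mathcal S*_N\mathcal A^\pi=0$. Hence every term of the sum in \eqref{remark222} is zero. Equivalently, in \eqref{3.111} each summand contains $\mathcal T^{i+2}*_N\mathcal Y*_N\mathcal A^\pi=-\mathcal T^{i+1}*_N(\mathcal T*_N\mathcal S*_N\mathcal A^\pi)=0$; here we used $\mathcal T*_N\mathcal A*_N\mathcal A^\pi=0$, because $\mathcal T=\mathcal A^D*_N(\cdots)$ on the left. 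So $\mathcal S^D=\mathcal T-\mathcal A^\pi*_N\mathcal Y*_N\mathcal T^2$.

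\textbf{Step 3: the main obstacle.} Formula \eqref{3.111}, and hence \eqref{remark222}, requires $\mathcal S_{\mathcal A^e}*_N\mathcal T=\mathcal A*_N\mathcal A^D$, i.e. condition (1) of Lemma \ref{2.7sae}. This is what makes $\mathcal T=\mathcal S_{\mathcal A^e}^{\#}$ so that Lemma \ref{2.2}(a) applies. I would first try to derive (1) or (4) from the identities of Step 1. However, they give $\mathcal K*_N\mathcal Z^\pi*_N\mathcal D^D*_N\mathcal H=0$ and $\mathcal K*_N\mathcal Z^D*_N\mathcal D^\pi*_N\mathcal H=\mathcal K*_N\mathcal Z^D*_N\mathcal H$, which do not obviously coincide.

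So my plan is to read the corollary, as the text introducing it indicates, as a strengthening of the running assumptions of Corollary \ref{cor3.4} and Theorem \ref{Thm3.1}. In that reading the group-inverse condition $\mathcal S_{\mathcal A^e}*_N\mathcal T=\mathcal A*_N\mathcal A^D$ (equivalently the hypotheses of Corollary \ref{cor3.4}) is retained. With it in force, Steps 1–2 complete the proof.

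If the condition is meant to be dropped, the fallback is different. I would check whether $\mathcal Z^D=\mathcal Z^D*_N\mathcal D^\pi$ and $\mathcal Z^\pi*_N\mathcal D^D=0$ force $\mathcal K*_N\mathcal Z^D*_N\mathcal H=0$ and $\mathcal K*_N\mathcal D^D*_N\mathcal Z^\pi*_N\mathcal H=0$, using $\mathcal D=\mathcal Z+\mathcal B*_N\mathcal A^D*_N\mathcal C$. This verification is the step I expect to be hardest.
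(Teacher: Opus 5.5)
Your Steps 1--2 follow the paper's own route. The paper reaches this corollary by putting the hypothesis into \eqref{remark222}, using the identity $\mathcal Z^D*_N\mathcal D^\pi-\mathcal Z^\pi*_N\mathcal D^D-\mathcal Z^D=-(\mathcal Z^D*_N\mathcal D+\mathcal Z^\pi)*_N\mathcal D^D$. So the hypothesis is literally the vanishing of that factor, and your Step 1 splitting is not needed for Step 2 (though it is what exposes the problem below). One small slip: $\mathcal T*_N\mathcal A*_N\mathcal A^\pi=0$ holds because $\mathcal A^D$ is a \emph{right} factor of $\mathcal T=(\mathcal I+\mathcal K*_N\mathcal Z^D*_N\mathcal B)*_N\mathcal A^D$, not a left one. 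Your reading in Step 3 is the correct one. Formula \eqref{remark222} is only available under the hypotheses of Theorem \ref{Thm3.1}, and the paper uses $\mathcal S_{\mathcal A^e}*_N\mathcal T=\mathcal A*_N\mathcal A^D$ implicitly ("we continue to strengthen the condition") without putting it in the statement.

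Drop the fallback, though: it cannot succeed, because without that condition the statement is false. Your Step 1 identities give $\mathcal Z^D*_N\mathcal D^D=\mathcal Z^D*_N\mathcal D^\pi*_N\mathcal D^D=0$, and hence $\mathcal D^D=\mathcal Z*_N\mathcal Z^D*_N\mathcal D^D=0$. Conversely, $\mathcal D^D=0$ implies the hypothesis. So the hypothesis merely says that $\mathcal D$ is nilpotent. Then $\mathcal Y=0$ and $\mathcal S=\mathcal A$, so the claimed formula says exactly $\mathcal K*_N\mathcal Z^D*_N\mathcal H=0$. Take $1\times 1$ tensors $\mathcal A=\mathcal B=\mathcal C=1$ and $\mathcal D=0$. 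Then $\mathcal A^\pi=0$, so the other two hypotheses hold with $t=1$, and $\mathcal Z=-1$, $\mathcal T=1+(-1)=0$, yet $\mathcal S^D=1$. If you add $\mathcal S_{\mathcal A^e}*_N\mathcal T=\mathcal A*_N\mathcal A^D$, it becomes (with $\mathcal S=\mathcal A$) $\mathcal A*_N\mathcal K*_N\mathcal Z^D*_N\mathcal H=0$, i.e.\ $\mathcal K*_N\mathcal Z^D*_N\mathcal H=0$. That gives $\mathcal T=\mathcal A^D=\mathcal S^D$, so your proof is valid for the corrected corollary, but the corrected corollary is trivial.
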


We now consider several specific cases of the preceding Theorem \ref{Thm3.1}, which not only generalize but also unify certain established results in the literature.
Firstly, after strengthening $\mathcal S*_N\mathcal A^\pi*_N\mathcal Y*_N\mathcal A^e=0$ in Theorem \ref{Thm3.1}, we can observe that the following equation $(a)$ recovers \cite[Theorem 2.5]{Daochangf}, while $(b)$ generalizes \cite[Theorem 2.9]{Daochangf}.
\begin{corollary}\label{cor3.3}
If $\mathcal S_{\mathcal A^e}*_N\mathcal T=\mathcal A*_N\mathcal A^{D}$ and if

(a)~$\mathcal A^\pi*_N\mathcal Y=0$, then
\begin{eqnarray*}
\mathcal S^{D}&=&\mathcal T-\sum_{i=0}^{k-1}\mathcal T^{i+2}*_N\mathcal Y*_N\mathcal A^\pi*_N\mathcal A^i
\nonumber,
\end{eqnarray*}

(b)~$\mathcal Y*_N\mathcal A^\pi=0$, then
\begin{eqnarray*}
\mathcal S^{D}&=&\mathcal T-\sum_{i=0}^{k-1}\mathcal A^{i}*_N\mathcal A^\pi*_N\mathcal Y*_N\mathcal T^{i+2},
\end{eqnarray*}
where $\ind(\mathcal A)=k$.
\end{corollary}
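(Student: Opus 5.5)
We treat the statement as a specialization of Theorem \ref{Thm3.1}. In each case we check that its two extra hypotheses hold: that $\mathcal S*_N\mathcal A^\pi$ is nilpotent with an explicit index, and the relevant annihilation condition. We then rewrite the powers $\mathcal S^i$ appearing in the formulas of Theorem \ref{Thm3.1} in terms of $\mathcal A^i$.

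For (a), assume $\mathcal A^\pi*_N\mathcal Y=0$. Then
$\mathcal S*_N\mathcal A^\pi=\mathcal A*_N\mathcal A^\pi-\mathcal Y*_N\mathcal A^\pi$ and
$\mathcal A^\pi*_N\mathcal S=\mathcal A*_N\mathcal A^\pi-\mathcal A^\pi*_N\mathcal Y=\mathcal A*_N\mathcal A^\pi$.
By induction, $(\mathcal S*_N\mathcal A^\pi)^n=\mathcal S*_N\mathcal A^\pi*_N\mathcal S^{n-1}=\mathcal S*_N\mathcal A^{n-1}*_N\mathcal A^\pi$, using $\mathcal A^\pi*_N\mathcal S^{j}=\mathcal A^{j}*_N\mathcal A^\pi$. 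Taking $n=k+1$ gives $\mathcal S*_N\mathcal A^{k}*_N\mathcal A^\pi=0$, since $\mathcal A^k*_N\mathcal A^\pi=0$ when $\ind(\mathcal A)=k$. Hence $\mathcal S*_N\mathcal A^\pi$ is $(k+1)$-nilpotent, and we take $t=k+1$. The condition $\mathcal S*_N\mathcal A^\pi*_N\mathcal Y*_N\mathcal A^e=0$ is immediate from $\mathcal A^\pi*_N\mathcal Y=0$. So Theorem \ref{Thm3.1}(a) applies. In its formula, every term containing $\mathcal A^\pi*_N\mathcal Y$ vanishes, leaving
$$\mathcal S^D=\mathcal T-\sum_{i=0}^{k-1}\mathcal T^{i+2}*_N\mathcal Y*_N\mathcal A^\pi*_N\mathcal S^i .$$
Finally, $\mathcal A^\pi*_N\mathcal S^i=\mathcal A^i*_N\mathcal A^\pi=\mathcal A^\pi*_N\mathcal A^i$, which yields the stated expression.

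For (b), assume $\mathcal Y*_N\mathcal A^\pi=0$. Then $\mathcal S*_N\mathcal A^\pi=\mathcal A*_N\mathcal A^\pi$, which is $k$-nilpotent (or we may take $t=k+1$ to be safe, since the extra term vanishes anyway). The condition $\mathcal S*_N\mathcal A^e*_N\mathcal Y*_N\mathcal A^\pi=0$ holds trivially. Theorem \ref{Thm3.1}(b) gives
$$\mathcal S^D=\mathcal T-\sum_{i=0}^{t-1}\mathcal S^i*_N\mathcal A^\pi*_N\mathcal Y*_N\mathcal T^{i+2}.$$
Now $\mathcal S^i*_N\mathcal A^\pi=\mathcal A^i*_N\mathcal A^\pi$, by induction using $\mathcal S*_N\mathcal A^\pi=\mathcal A^\pi*_N\mathcal A$, which commutes with $\mathcal A^\pi$. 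For $i\ge k$ these terms vanish, so the sum truncates at $k-1$.

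The main point needing care is the nilpotency index and the bookkeeping of the summation limits. In particular, in (a) the sum in Theorem \ref{Thm3.1} runs only to $t-2=k-1$. We must confirm that this matches exactly, rather than losing or gaining a term, by checking the power identity $(\mathcal S*_N\mathcal A^\pi)^{k+1}=0$ carefully. If the index bound turned out to be smaller, the extra summands would in any case vanish because $\mathcal A^\pi*_N\mathcal A^i=0$ for $i\ge k$, so the formula is insensitive to choosing $t$ larger.
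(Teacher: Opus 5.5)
Your proposal is correct and follows the route the paper indicates: you specialize Theorem~\ref{Thm3.1}. The hypothesis $\mathcal A^\pi*_N\mathcal Y=0$ (resp.\ $\mathcal Y*_N\mathcal A^\pi=0$) gives the annihilation condition and makes $\mathcal S*_N\mathcal A^\pi$ nilpotent with $t=k+1$ (resp.\ $t=k$), and then $\mathcal A^\pi*_N\mathcal S^i=\mathcal A^\pi*_N\mathcal A^i$ (resp.\ $\mathcal S^i*_N\mathcal A^\pi=\mathcal A^i*_N\mathcal A^\pi$) yields the stated formulas. Your explicit check of the nilpotency index and summation limits supplies the bookkeeping that the paper leaves implicit.
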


We now state another analogous result.
\begin{theorem}\label{Thm3.2}
If $\mathcal S_{\mathcal A^e}*_N\mathcal T=\mathcal A*_N\mathcal A^{D}$, $\mathcal A^\pi*_N\mathcal S$ is r-nilpotent and if

(a)~$\mathcal A^e*_N\mathcal Y*_N\mathcal A^\pi*_N\mathcal S=0$, then
\begin{eqnarray*}
\mathcal S^{D}&=&-\sum_{i=0}^{r-2}\mathcal S^i*_N\mathcal A^\pi*_N\mathcal Y*_N\mathcal T^{i+2}*_N(\mathcal I-\mathcal T*_N\mathcal Y*_N\mathcal A^\pi)
\nonumber\\&+&\mathcal T-\mathcal T^2*_N\mathcal Y*_N\mathcal A^\pi.
\end{eqnarray*}

(b)~$\mathcal A^\pi*_N\mathcal Y*_N\mathcal A^e*_N\mathcal S=0$, then
\begin{eqnarray*}
\mathcal S^{D}&=&-\sum_{i=0}^{r-1}\mathcal T^{i+2}*_N*_N\mathcal Y*_N\mathcal A^\pi*_N\mathcal S^{i}+\mathcal T.
\end{eqnarray*}
\end{theorem}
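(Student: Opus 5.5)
The plan is to mirror the proof of Theorem \ref{Thm3.1}. We combine Lemma \ref{sd24}, which expresses $\mathcal S^D$ through $\mathcal S_{\mathcal A^e}^D$ when $\mathcal A^\pi*_N\mathcal S$ is $r$-nilpotent, with Lemma \ref{2.7sae}. The latter identifies $\mathcal S_{\mathcal A^e}^D$ explicitly under the hypothesis $\mathcal S_{\mathcal A^e}*_N\mathcal T=\mathcal A*_N\mathcal A^D$.

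First, since $\mathcal S_{\mathcal A^e}*_N\mathcal T=\mathcal A*_N\mathcal A^{D}$ is statement (2) of Lemma \ref{2.7sae}, that lemma gives that $\mathcal S_{\mathcal A^e}$ is group invertible with $\mathcal S_{\mathcal A^e}^{\#}=\mathcal T$. Because the group inverse coincides with the Drazin inverse, we may substitute $\mathcal S_{\mathcal A^e}^D=\mathcal T$, and hence $(\mathcal S_{\mathcal A^e}^D)^j=\mathcal T^j$, everywhere in Lemma \ref{sd24}.

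For part (b), the hypothesis $\mathcal A^\pi*_N\mathcal Y*_N\mathcal A^e*_N\mathcal S=0$ together with $r$-nilpotency of $\mathcal A^\pi*_N\mathcal S$ is exactly the setting of Lemma \ref{sd24}(b). The substitution above then yields $\mathcal S^D=-\sum_{i=0}^{r-1}\mathcal T^{i+2}*_N\mathcal Y*_N\mathcal A^\pi*_N\mathcal S^{i}+\mathcal T$ immediately; the doubled $*_N$ in the statement is a typographical slip.

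For part (a), Lemma \ref{sd24}(a) gives
$$\mathcal S^D=\sum_{i=0}^{r-2}\mathcal S^i*_N\mathcal A^\pi*_N\mathcal S*_N\bigl(\mathcal T^{i+2}-\mathcal T^{i+3}*_N\mathcal Y*_N\mathcal A^\pi\bigr)+\mathcal T-\mathcal T^2*_N\mathcal Y*_N\mathcal A^\pi.$$
It remains to rewrite $\mathcal A^\pi*_N\mathcal S*_N\mathcal T^{i+2}$ in terms of $\mathcal Y$. Since $\mathcal T=\mathcal A^D+\mathcal K*_N\mathcal Z^D*_N\mathcal H$ satisfies $\mathcal T=\mathcal A^e*_N\mathcal T$ (both $\mathcal A^D$ and $\mathcal K=\mathcal A^D*_N\mathcal C$ start with $\mathcal A^D$), we have $\mathcal A^\pi*_N\mathcal S*_N\mathcal T=\mathcal A^\pi*_N\mathcal S*_N\mathcal A^e*_N\mathcal T$. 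Moreover, $\mathcal A^\pi*_N\mathcal A*_N\mathcal A^e=0$ and $\mathcal S=\mathcal A-\mathcal Y$, so this equals $-\mathcal A^\pi*_N\mathcal Y*_N\mathcal T$. Therefore $\mathcal A^\pi*_N\mathcal S*_N\mathcal T^{i+2}=-\mathcal A^\pi*_N\mathcal Y*_N\mathcal T^{i+2}$. Factoring gives $\mathcal T^{i+2}-\mathcal T^{i+3}*_N\mathcal Y*_N\mathcal A^\pi=\mathcal T^{i+2}*_N(\mathcal I-\mathcal T*_N\mathcal Y*_N\mathcal A^\pi)$, and the sum becomes exactly $-\sum_{i=0}^{r-2}\mathcal S^i*_N\mathcal A^\pi*_N\mathcal Y*_N\mathcal T^{i+2}*_N(\mathcal I-\mathcal T*_N\mathcal Y*_N\mathcal A^\pi)$, as claimed.

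The only step needing care is this simplification $\mathcal A^\pi*_N\mathcal S*_N\mathcal T=-\mathcal A^\pi*_N\mathcal Y*_N\mathcal T$, in particular checking that $\mathcal A^e*_N\mathcal T=\mathcal T$ from the definitions of $\mathcal K$ and $\mathcal H$. I would verify this first; the rest is direct substitution.
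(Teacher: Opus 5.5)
Your proposal is correct and follows the paper's own proof, which simply combines Lemma \ref{sd24} with Lemma \ref{2.7sae} by substituting $\mathcal S_{\mathcal A^e}^D=\mathcal T$. You also spell out a step the paper leaves implicit when passing from Lemma \ref{sd24}(a) to part (a): the simplification $\mathcal A^\pi*_N\mathcal S*_N\mathcal T=-\mathcal A^\pi*_N\mathcal Y*_N\mathcal T$, which follows from $\mathcal A^e*_N\mathcal T=\mathcal T$ and $\mathcal A^\pi*_N\mathcal A*_N\mathcal A^e=0$.
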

\begin{proof}
The results follow from Lemma \ref{sd24} and Lemma \ref{2.7sae}.
\end{proof}

It can be seen that Theorem \ref{Thm3.2} $(a)$ generalizes \cite[Theorem 2.5 $(c)$]{Dijanaf} and \cite[Theorem 2.9]{Daochangf}, while $(b)$ extends \cite[Theorem 2.5 $(d)$]{Dijanaf} and \cite[Theorem 2.5]{Daochangf}. Now, we refine and extend  \cite[Theorem 2.5 $(a)$$(c)$]{Dijanaf} by removing one of its assumptions and proposing the following theorem.

\begin{theorem}\label{Thm3.3}
If $\mathcal K*_N\mathcal D^{\pi}*_N\mathcal Z^{D}*_N\mathcal H
   =\mathcal K*_N\mathcal D^{D}*_N\mathcal Z^{\pi}*_N\mathcal H$,
   $ind(\mathcal A)=k$ and if

(a)~$\mathcal S*_N\mathcal A^\pi*_N\mathcal Y=0$, then
\begin{eqnarray*}
\mathcal S^{D}&=&-\sum_{i=0}^{k-1}(\mathcal I-\mathcal A^\pi*_N\mathcal Y*_N\mathcal T)*_N\mathcal T^{i+2}*_N\mathcal Y*_N\mathcal A^\pi*_N\mathcal A^i
\nonumber\\&+&\mathcal T-\mathcal A^\pi *_N\mathcal Y*_N\mathcal T^2.
\end{eqnarray*}

(b)~$\mathcal Y*_N\mathcal A^\pi*_N\mathcal S=0$, then
\begin{eqnarray*}
\mathcal S^{D}&=&-\sum_{i=0}^{k-1}\mathcal A^i*_N\mathcal A^\pi*_N\mathcal Y*_N\mathcal T^{i+2}*_N(\mathcal I-\mathcal T*_N\mathcal Y*_N\mathcal A^\pi)
\nonumber\\&+&\mathcal T-\mathcal T^2*_N\mathcal Y*_N\mathcal A^\pi.
\end{eqnarray*}
where $\ind(\mathcal A)=k$.
\end{theorem}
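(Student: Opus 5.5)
Theorem \ref{Thm3.3} is the combination of Corollaries \ref{1.1} and \ref{2.1} with Lemma \ref{2.7sae}. I would first reduce the group-inverse part to Lemma \ref{2.7sae}, and then reduce the expansions to Corollaries \ref{1.1}--\ref{2.1}.

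\emph{Step 1.} The hypothesis $\mathcal K*_N\mathcal D^{\pi}*_N\mathcal Z^{D}*_N\mathcal H=\mathcal K*_N\mathcal D^{D}*_N\mathcal Z^{\pi}*_N\mathcal H$ is condition (1) of Lemma \ref{2.7sae}. That lemma then gives $\mathcal S_{\mathcal A^e}*_N\mathcal T=\mathcal T*_N\mathcal S_{\mathcal A^e}=\mathcal A*_N\mathcal A^D$, and moreover $\mathcal S_{\mathcal A^e}^{D}=\mathcal S_{\mathcal A^e}^{\sharp}=\mathcal T$. Consequently $(\mathcal S_{\mathcal A^e}^D)^j=\mathcal T^j$ for every $j\ge1$.

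\emph{Step 2.} The nilpotency in Corollaries \ref{1.1}--\ref{2.1} needs no extra hypothesis. In case (a), $\mathcal S*_N\mathcal A^\pi*_N\mathcal Y=0$ gives
$$\mathcal S*_N\mathcal A^\pi*_N\mathcal S=\mathcal S*_N\mathcal A^\pi*_N\mathcal A-\mathcal S*_N\mathcal A^\pi*_N\mathcal Y=\mathcal S*_N\mathcal A^\pi*_N\mathcal A.$$
Inductively, $(\mathcal S*_N\mathcal A^\pi)^{n}=\mathcal S*_N(\mathcal A^\pi*_N\mathcal A)^{n-1}*_N\mathcal A^\pi$. Since $(\mathcal A^\pi*_N\mathcal A)^k=\mathcal A^\pi*_N\mathcal A^k=0$, the tensor $\mathcal S*_N\mathcal A^\pi$ is $(k+1)$-nilpotent. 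The splitting condition $\mathcal S*_N\mathcal A^\pi*_N\mathcal Y*_N\mathcal A^e=0$ of Lemma \ref{2.2}(a) holds trivially. In Lemma \ref{2.2}(a), each term $\mathcal Y*_N\mathcal A^\pi*_N\mathcal S^i$ can be rewritten as $\mathcal Y*_N\mathcal A^\pi*_N\mathcal A^i$, because $\mathcal A^\pi*_N\mathcal S^i=\mathcal A^\pi*_N\mathcal A^i$ modulo terms killed on the left. More carefully, I would redo the $\mathcal P+\mathcal Q$ computation with $(\mathcal S*_N\mathcal A^\pi)^i=\mathcal S*_N\mathcal A^\pi*_N\mathcal A^{i-1}$, which is exactly what Corollary \ref{1.1} records. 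The summation then runs to $i=k-1$, since the term with $\mathcal A^\pi*_N\mathcal A^{k}$ vanishes. This yields the formula of Corollary \ref{1.1}. Substituting $(\mathcal S_{\mathcal A^e}^D)^{j}=\mathcal T^{j}$ and factoring
$$(\mathcal T^{i+2}-\mathcal A^\pi*_N\mathcal Y*_N\mathcal T^{i+3})=(\mathcal I-\mathcal A^\pi*_N\mathcal Y*_N\mathcal T)*_N\mathcal T^{i+2}$$
gives (a).

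\emph{Step 3.} Part (b) is the dual. From $\mathcal Y*_N\mathcal A^\pi*_N\mathcal S=0$ we get $\mathcal S*_N\mathcal A^\pi*_N\mathcal S=\mathcal A*_N\mathcal A^\pi*_N\mathcal S$, so $\mathcal A^\pi*_N\mathcal S$ is nilpotent with powers $\mathcal A^{n-1}*_N\mathcal A^\pi*_N\mathcal S$. We then apply Corollary \ref{2.1} (via Lemma \ref{sd24}(a)) and substitute $\mathcal T$ for $\mathcal S_{\mathcal A^e}^D$. Factoring on the right,
$$\mathcal T^{i+2}-\mathcal T^{i+3}*_N\mathcal Y*_N\mathcal A^\pi=\mathcal T^{i+2}*_N(\mathcal I-\mathcal T*_N\mathcal Y*_N\mathcal A^\pi).$$

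The main obstacle is the bookkeeping in Step 2: checking that the nilpotent pieces and the product $\mathcal P*_N\mathcal Q=0$ hold with $t=k+1$. One must also confirm that in the sum from Lemma \ref{2.2}(a), $\mathcal Y*_N\mathcal A^\pi*_N\mathcal S^i$ may legitimately be replaced by $\mathcal Y*_N\mathcal A^\pi*_N\mathcal A^i$. I would verify this by writing $\mathcal S^i=(\mathcal A-\mathcal Y)^i$ and noting that every term containing a factor $\mathcal A^\pi*_N\mathcal Y$ (after $\mathcal A^\pi$ commutes with $\mathcal A$) contributes $\mathcal S*_N\mathcal A^\pi*_N\mathcal A^j*_N\mathcal Y$. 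For $\mathcal S*_N\mathcal A^\pi*_N\mathcal A^j*_N\mathcal Y$ to vanish I need $\mathcal S*_N\mathcal A^\pi*_N\mathcal A^j=\mathcal S*_N\mathcal A^\pi*_N\mathcal S^j$. This identity follows from the same induction as in Step 2, and then the hypothesis $\mathcal S*_N\mathcal A^\pi*_N\mathcal Y=0$ applies.
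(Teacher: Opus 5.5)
Your proposal is correct and follows the paper's route. Lemma~\ref{2.7sae} identifies $\mathcal S_{\mathcal A^e}^{D}=\mathcal T$, and the splitting of Lemma~\ref{2.2}(a) (resp.\ Lemma~\ref{sd24}(a)), specialized as in Corollaries~\ref{1.1} and \ref{2.1} via $(\mathcal S*_N\mathcal A^\pi)^n=\mathcal S*_N\mathcal A^\pi*_N\mathcal A^{n-1}$ with nilpotency index $k+1$, gives both formulas after substituting $\mathcal T$ and factoring. One small cleanup in your last paragraph: the vanishing you need follows directly from your Step 2 power formula as $\mathcal S*_N\mathcal A^\pi*_N\mathcal A^j*_N\mathcal Y=(\mathcal S*_N\mathcal A^\pi)^{j}*_N(\mathcal S*_N\mathcal A^\pi*_N\mathcal Y)=0$, whereas passing through $\mathcal S*_N\mathcal A^\pi*_N\mathcal S^j$ is circular and the hypothesis does not apply to $\mathcal S*_N\mathcal A^\pi*_N\mathcal S^j*_N\mathcal Y$ directly.
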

\begin{proof}
The results of the theorem follow directly from Lemma \ref{2.2} and Lemma \ref{2.7sae}.
\end{proof}

Note that the above equation $(a)$ also generalizes \cite[Theorem 2.5]{Daochangf}, while $(b)$ also extends \cite[Theorem 2.9]{Daochangf}. In what follows, several special cases of Theorem \ref{Thm3.3} will be discussed, providing a generalization of certain established results.

When $\mathcal D=\mathcal I$, it follows that $\mathcal D^\pi=0$. Utilizing Theorem \ref{Thm3.3}, we obtain the following corollaries.

\begin{corollary}\label{DI1}
If $\mathcal K*_N\mathcal Z^{\pi}*_N\mathcal H=0$, $\ind(\mathcal A)=k$ and if

(a)~$\mathcal A*_N\mathcal A^\pi*_N\mathcal C*_N\mathcal B=\mathcal C*_N\mathcal B*_N\mathcal A^\pi*_N\mathcal C*_N\mathcal B$, then
\begin{eqnarray*}
(\mathcal A-\mathcal C*_N\mathcal B)^{D}&=&\sum_{i=0}^{k-1}\big[\mathcal I-\mathcal A^\pi*_N\mathcal C*_N\mathcal B*_N(\mathcal A^D+\mathcal K*_N\mathcal Z^D*_N\mathcal H)\big]*_N(\mathcal A^D+\mathcal K*_N\mathcal Z^D*_N\mathcal H)^{i+2}\\&*_N&(\mathcal A-\mathcal C*_N\mathcal B)*_N\mathcal A^\pi*_N\mathcal A^i
-\mathcal A^\pi *_N\mathcal C*_N\mathcal B*_N(\mathcal A^D+\mathcal K*_N\mathcal Z^D*_N\mathcal H)^2
\nonumber\\&+&\mathcal A^D+\mathcal K*_N\mathcal Z^D*_N\mathcal H.
\end{eqnarray*}

(b)~$\mathcal C*_N\mathcal B*_N\mathcal A*_N\mathcal A^\pi=\mathcal C*_N\mathcal B*_N\mathcal A^\pi*_N\mathcal C*_N\mathcal B$, then
\begin{eqnarray*}
(\mathcal A-\mathcal C*_N\mathcal B)^{D}&=&\sum_{i=0}^{k-1}\mathcal A^\pi*_N\mathcal A^i*_N(\mathcal A-\mathcal C*_N\mathcal B)*_N(\mathcal A^D+\mathcal K*_N\mathcal Z^D*_N\mathcal H)^{i+2}\\&*_N&\big[\mathcal I-(\mathcal A^D+\mathcal K*_N\mathcal Z^D*_N\mathcal H)*_N\mathcal C*_N\mathcal B*_N\mathcal A^\pi \big]
+\mathcal A^D+\mathcal K*_N\mathcal Z^D*_N\mathcal H
\nonumber\\&-&(\mathcal A^D+\mathcal K*_N\mathcal Z^D*_N\mathcal H)^2*_N\mathcal C*_N\mathcal B*_N\mathcal A^\pi
\\&=&-\sum_{i=0}^{k-1}\mathcal A^\pi*_N\mathcal A^i*_N\mathcal C*_N\mathcal B*_N(\mathcal A^D+\mathcal K*_N\mathcal Z^D*_N\mathcal H)^{i+2}\\&*_N&\big[\mathcal I-(\mathcal A^D+\mathcal K*_N\mathcal Z^D*_N\mathcal H)*_N\mathcal C*_N\mathcal B*_N\mathcal A^\pi \big]
+\mathcal A^D+\mathcal K*_N\mathcal Z^D*_N\mathcal H
\nonumber\\&-&(\mathcal A^D+\mathcal K*_N\mathcal Z^D*_N\mathcal H)^2*_N\mathcal C*_N\mathcal B*_N\mathcal A^\pi.
\end{eqnarray*}
\end{corollary}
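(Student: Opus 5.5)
The plan is to specialize Theorem \ref{Thm3.3} to $\mathcal D=\mathcal I$. First I will record what the notation becomes. Since $\mathcal I^D=\mathcal I$ and $\mathcal D^\pi=0$, we get $\mathcal Y=\mathcal C*_N\mathcal B$ and $\mathcal S=\mathcal A-\mathcal C*_N\mathcal B$. The Schur complement becomes $\mathcal Z=\mathcal I-\mathcal B*_N\mathcal A^D*_N\mathcal C$, while $\mathcal T=\mathcal A^D+\mathcal K*_N\mathcal Z^D*_N\mathcal H$ keeps its form. The hypothesis of Theorem \ref{Thm3.3}, $\mathcal K*_N\mathcal D^{\pi}*_N\mathcal Z^{D}*_N\mathcal H=\mathcal K*_N\mathcal D^{D}*_N\mathcal Z^{\pi}*_N\mathcal H$, then reads $0=\mathcal K*_N\mathcal Z^\pi*_N\mathcal H$. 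This is exactly the assumption $\mathcal K*_N\mathcal Z^{\pi}*_N\mathcal H=0$. By Lemma \ref{2.7sae}, $\mathcal T$ is the group inverse of $\mathcal S_{\mathcal A^e}$.

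Next I will translate the side conditions. For (a), $\mathcal S*_N\mathcal A^\pi*_N\mathcal Y=0$ becomes $(\mathcal A-\mathcal C*_N\mathcal B)*_N\mathcal A^\pi*_N\mathcal C*_N\mathcal B=0$. Since $\mathcal A$ commutes with $\mathcal A^\pi$, this is equivalent to $\mathcal A*_N\mathcal A^\pi*_N\mathcal C*_N\mathcal B=\mathcal C*_N\mathcal B*_N\mathcal A^\pi*_N\mathcal C*_N\mathcal B$, which is the stated hypothesis. For (b), $\mathcal Y*_N\mathcal A^\pi*_N\mathcal S=0$ becomes $\mathcal C*_N\mathcal B*_N\mathcal A^\pi*_N\mathcal A=\mathcal C*_N\mathcal B*_N\mathcal A^\pi*_N\mathcal C*_N\mathcal B$, and $\mathcal A^\pi*_N\mathcal A=\mathcal A*_N\mathcal A^\pi$ gives the stated form. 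Theorem \ref{Thm3.3}(a) and (b) then apply directly, with $k=\ind(\mathcal A)$.

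The remaining work is rewriting the sums in the stated form.
\begin{itemize}
\item In (a), Theorem \ref{Thm3.3} gives the summand $-(\mathcal I-\mathcal A^\pi*_N\mathcal Y*_N\mathcal T)*_N\mathcal T^{i+2}*_N\mathcal C*_N\mathcal B*_N\mathcal A^\pi*_N\mathcal A^i$. To match the stated summand, I need to replace $-\mathcal C*_N\mathcal B*_N\mathcal A^\pi$ by $(\mathcal A-\mathcal C*_N\mathcal B)*_N\mathcal A^\pi$. The difference is $\mathcal T^{i+2}*_N\mathcal A*_N\mathcal A^\pi$. It vanishes because $\mathcal T=\mathcal A^D+\mathcal K*_N\mathcal Z^D*_N\mathcal H$ starts with $\mathcal A^D$ in both terms ($\mathcal K=\mathcal A^D*_N\mathcal C$), so $\mathcal T=\mathcal A^D*_N\mathcal A*_N\mathcal T$. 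Symmetrically, $\mathcal H=\mathcal B*_N\mathcal A^D$ gives $\mathcal T=\mathcal T*_N\mathcal A*_N\mathcal A^D$, hence $\mathcal T*_N\mathcal A^\pi=0$.
\item In (b) the dual identity $\mathcal A^\pi*_N\mathcal T=0$ gives both displayed forms, the first with $\mathcal A^\pi*_N\mathcal A^i*_N(\mathcal A-\mathcal C*_N\mathcal B)$ and the second with $-\mathcal A^\pi*_N\mathcal A^i*_N\mathcal C*_N\mathcal B$. Commuting $\mathcal A^i$ past $\mathcal A^\pi$ matches Theorem \ref{Thm3.3}(b).
\end{itemize}

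The main obstacle is bookkeeping rather than anything conceptual. One point is checking that $\mathcal T*_N\mathcal A^\pi=\mathcal A^\pi*_N\mathcal T=0$, which I will do first. The other is that the sign in (a) comes out consistent with the displayed formula.
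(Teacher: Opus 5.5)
Your proposal is correct and takes the same route as the paper. The paper obtains this corollary by setting $\mathcal D=\mathcal I$ (so $\mathcal D^D=\mathcal I$ and $\mathcal D^\pi=0$) in Theorem \ref{Thm3.3}. Your check that $\mathcal T*_N\mathcal A^\pi=\mathcal A^\pi*_N\mathcal T=0$ lets you trade $-\mathcal C*_N\mathcal B*_N\mathcal A^\pi$ for $(\mathcal A-\mathcal C*_N\mathcal B)*_N\mathcal A^\pi$ in (a), and similarly on the left in (b); this supplies the rewriting that the paper leaves implicit.
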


Notably, the preceding corollary provides a generalization of \cite[Theorem 2.1]{daochang29}. By strengthening the conditions of Corollary \ref{DI1}, we get the following results, which can also generalize \cite[Theorem 2.1]{daochang29}.

\begin{corollary}\label{DI2}
If $\mathcal C*_N\mathcal Z^{\pi}*_N\mathcal B=0$, $\ind(\mathcal A)=k$ and if

(a)~$\mathcal A^\pi*_N\mathcal C=0$, then
\begin{eqnarray*}
(\mathcal A-\mathcal C*_N\mathcal B)^{D}&=&\sum_{i=0}^{k-1}(\mathcal A^D+\mathcal K*_N\mathcal Z^D*_N\mathcal H)^{i+2}*_N(\mathcal A-\mathcal C*_N\mathcal B)*_N\mathcal A^\pi*_N\mathcal A^i
\nonumber\\&+&\mathcal A^D+\mathcal K*_N\mathcal Z^D*_N\mathcal H.
\end{eqnarray*}

(b)~$\mathcal B*_N\mathcal A^\pi=0$, then
\begin{eqnarray*}
(\mathcal A-\mathcal C*_N\mathcal B)^{D}&=&\sum_{i=0}^{k-1}\mathcal A^\pi*_N\mathcal A^i*_N(\mathcal A-\mathcal C*_N\mathcal B)*_N(\mathcal A^D+\mathcal K*_N\mathcal Z^D*_N\mathcal H)^{i+2}
\\&+&\mathcal A^D+\mathcal K*_N\mathcal Z^D*_N\mathcal H
\\&=&-\sum_{i=0}^{k-1}\mathcal A^\pi*_N\mathcal A^i*_N\mathcal C*_N\mathcal B*_N(\mathcal A^D+\mathcal K*_N\mathcal Z^D*_N\mathcal H)^{i+2}
\\&+&\mathcal A^D+\mathcal K*_N\mathcal Z^D*_N\mathcal H.
\end{eqnarray*}
\end{corollary}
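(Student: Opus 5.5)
We derive Corollary \ref{DI2} as a specialization of Corollary \ref{DI1}, which is Theorem \ref{Thm3.3} with $\mathcal D=\mathcal I$. The plan has three steps: check the hypotheses of Corollary \ref{DI1}, reduce its two formulas using the new assumptions, and then verify the second expression in (b).

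\emph{Hypotheses.} With $\mathcal D=\mathcal I$ we have $\mathcal D^D=\mathcal I$, $\mathcal D^\pi=0$ and $\mathcal Y=\mathcal C*_N\mathcal B$. The assumption $\mathcal C*_N\mathcal Z^\pi*_N\mathcal B=0$ gives $\mathcal K*_N\mathcal Z^\pi*_N\mathcal H=\mathcal A^D*_N(\mathcal C*_N\mathcal Z^\pi*_N\mathcal B)*_N\mathcal A^D=0$. This is the common hypothesis of Corollary \ref{DI1}, and it is also condition (1) of Lemma \ref{2.7sae} with $\mathcal D^\pi=0$. Hence $\mathcal T=\mathcal A^D+\mathcal K*_N\mathcal Z^D*_N\mathcal H$ is the group inverse of $\mathcal S_{\mathcal A^e}$.
\begin{itemize}
\item In case (a), $\mathcal A^\pi*_N\mathcal C=0$ makes both sides of the hypothesis $\mathcal A*_N\mathcal A^\pi*_N\mathcal C*_N\mathcal B=\mathcal C*_N\mathcal B*_N\mathcal A^\pi*_N\mathcal C*_N\mathcal B$ of Corollary \ref{DI1}(a) vanish. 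Here we use that $\mathcal A$ commutes with $\mathcal A^\pi$.
\item In case (b), $\mathcal B*_N\mathcal A^\pi=0$ makes both sides of the hypothesis $\mathcal C*_N\mathcal B*_N\mathcal A*_N\mathcal A^\pi=\mathcal C*_N\mathcal B*_N\mathcal A^\pi*_N\mathcal C*_N\mathcal B$ of Corollary \ref{DI1}(b) vanish.
\end{itemize}
So Corollary \ref{DI1} applies in both cases.

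\emph{Simplification.}
\begin{itemize}
\item In (a), every occurrence of $\mathcal A^\pi*_N\mathcal C*_N\mathcal B$ vanishes. The bracket $\mathcal I-\mathcal A^\pi*_N\mathcal C*_N\mathcal B*_N\mathcal T$ becomes $\mathcal I$, and the term $-\mathcal A^\pi*_N\mathcal C*_N\mathcal B*_N\mathcal T^2$ disappears. What remains is exactly the stated formula.
\item In (b), $\mathcal C*_N\mathcal B*_N\mathcal A^\pi=0$. The bracket $\mathcal I-\mathcal T*_N\mathcal C*_N\mathcal B*_N\mathcal A^\pi$ becomes $\mathcal I$, and the term $-\mathcal T^2*_N\mathcal C*_N\mathcal B*_N\mathcal A^\pi$ drops. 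This gives the first expression in (b).
\end{itemize}

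\emph{Second expression in (b).} Expand $\mathcal A^\pi*_N\mathcal A^i*_N(\mathcal A-\mathcal C*_N\mathcal B)*_N\mathcal T^{i+2}$ into two terms. It suffices to show $\mathcal A^\pi*_N\mathcal A^{i+1}*_N\mathcal T=0$. Since $\mathcal A^\pi*_N\mathcal A^{i+1}=\mathcal A^{i+1}*_N\mathcal A^\pi$, we need $\mathcal A^\pi*_N\mathcal T=0$. First, $\mathcal A^\pi*_N\mathcal A^D=0$. Second, $\mathcal A^\pi*_N\mathcal K=\mathcal A^\pi*_N\mathcal A^D*_N\mathcal C=0$. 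Hence $\mathcal A^\pi*_N\mathcal T=0$, and only the $-\mathcal C*_N\mathcal B$ part of each summand survives. This is the last display.

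The only step needing care is this last one. An alternative is to note that $\mathcal T=\mathcal A^D*_N(\cdots)$, which again gives $\mathcal A^\pi*_N\mathcal T=0$. The rest is routine substitution.
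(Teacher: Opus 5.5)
Your proposal is correct and is essentially the paper's argument: the paper says this corollary follows by strengthening the hypotheses of Corollary~\ref{DI1} (Theorem~\ref{Thm3.3} with $\mathcal D=\mathcal I$), and your reduction, together with $\mathcal A^\pi*_N\mathcal T=0$ for the second expression in (b), fills in those details. One small slip: the commutation $\mathcal A*_N\mathcal A^\pi=\mathcal A^\pi*_N\mathcal A$ is not needed in case (a), since $\mathcal A^\pi*_N\mathcal C$ already appears adjacently there, but it is needed in case (b) to see that $\mathcal C*_N\mathcal B*_N\mathcal A*_N\mathcal A^\pi=\mathcal C*_N(\mathcal B*_N\mathcal A^\pi)*_N\mathcal A$ vanishes.
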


\begin{corollary}\cite[Theorem 2.1]{daochang29}\label{DI3}
If $\mathcal A^{\pi}*_N\mathcal C=0$, $\mathcal B*_N\mathcal A^{\pi}=0$ and $\mathcal C*_N\mathcal Z^{\pi}*_N\mathcal B=0$, then
\begin{eqnarray*}
(\mathcal A-\mathcal C*_N\mathcal B)^{D}&=&\mathcal A^D+\mathcal K*_N\mathcal Z^D*_N\mathcal H.
\end{eqnarray*}
\end{corollary}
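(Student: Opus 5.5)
We take $\mathcal D=\mathcal I$ and obtain the statement as the degenerate case of Corollary \ref{DI2} in which both sums vanish. With $\mathcal D=\mathcal I$ we have $\mathcal D^D=\mathcal I$ and $\mathcal D^\pi=0$. Hence $\mathcal Y=\mathcal C*_N\mathcal B$, $\mathcal S=\mathcal A-\mathcal C*_N\mathcal B$, $\mathcal Z=\mathcal I-\mathcal B*_N\mathcal A^D*_N\mathcal C$ and $\mathcal T=\mathcal A^D+\mathcal K*_N\mathcal Z^D*_N\mathcal H$.

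\textbf{Step 1: the hypothesis of Theorem \ref{Thm3.3}.} Under $\mathcal D=\mathcal I$, condition (1) of Lemma \ref{2.7sae} reads $0=\mathcal K*_N\mathcal Z^\pi*_N\mathcal H$. Writing $\mathcal K*_N\mathcal Z^\pi*_N\mathcal H=\mathcal A^D*_N(\mathcal C*_N\mathcal Z^\pi*_N\mathcal B)*_N\mathcal A^D$, this follows from $\mathcal C*_N\mathcal Z^\pi*_N\mathcal B=0$. So Lemma \ref{2.7sae} applies: $\mathcal S_{\mathcal A^e}*_N\mathcal T=\mathcal A*_N\mathcal A^D$ and $\mathcal S_{\mathcal A^e}^{\sharp}=\mathcal T$.

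\textbf{Step 2: collapsing $\mathcal S$.} From $\mathcal A^\pi*_N\mathcal C=0$ and $\mathcal B*_N\mathcal A^\pi=0$ we get $\mathcal A^\pi*_N\mathcal Y=0$ and $\mathcal Y*_N\mathcal A^\pi=0$. Therefore $\mathcal S*_N\mathcal A^\pi*_N\mathcal Y=0$, so Theorem \ref{Thm3.3}(a) (equivalently Corollary \ref{cor3.3}(a)) applies. Every term of its sum contains the factor $\mathcal Y*_N\mathcal A^\pi=0$, and the correction term $\mathcal A^\pi*_N\mathcal Y*_N\mathcal T^2$ vanishes as well. Only $\mathcal S^D=\mathcal T$ survives, which is the claim.

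\textbf{Step 3: a direct check.} Because the theorems invoked were proved only by the brief pointers above, I would also verify the claim directly. Since $\mathcal A^\pi*_N\mathcal C=0$ and $\mathcal B*_N\mathcal A^\pi=0$, we have $\mathcal C=\mathcal A^e*_N\mathcal C$ and $\mathcal B=\mathcal B*_N\mathcal A^e$. This gives the splitting
$$\mathcal S=\mathcal S_{\mathcal A^e}+\mathcal A*_N\mathcal A^\pi,$$
where $\mathcal S_{\mathcal A^e}*_N\mathcal A*_N\mathcal A^\pi=0=\mathcal A*_N\mathcal A^\pi*_N\mathcal S_{\mathcal A^e}$ and $\mathcal A*_N\mathcal A^\pi$ is nilpotent. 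Lemma \ref{PQ=0} then gives $\mathcal S^D=\mathcal S_{\mathcal A^e}^D$, because all the cross terms are killed by orthogonality. Step 1 identifies $\mathcal S_{\mathcal A^e}^D$ with $\mathcal S_{\mathcal A^e}^\sharp=\mathcal T$.

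The only delicate point is confirming that $\mathcal T$ lies in the corner algebra $\mathcal A^e*_N(\cdot)*_N\mathcal A^e$. This holds because $\mathcal A^D$, $\mathcal K$ and $\mathcal H$ carry the factor $\mathcal A^D$ on the appropriate sides, so the group inverse computed in Lemma \ref{2.7sae} is the one relevant here. The rest is routine.
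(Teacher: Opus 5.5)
Your proof is correct and follows the paper's own route: the paper gives no separate argument, but obtains this statement as the $\mathcal D=\mathcal I$ specialization of Theorem \ref{Thm3.3}(a) (via Corollary \ref{DI2}), where $\mathcal A^\pi*_N\mathcal C=0$ and $\mathcal B*_N\mathcal A^\pi=0$ force $\mathcal A^\pi*_N\mathcal Y=0=\mathcal Y*_N\mathcal A^\pi$, so every sum collapses to $\mathcal T$, exactly as in your Steps 1--2. Your Step 3 is also valid: you split $\mathcal S=\mathcal S_{\mathcal A^e}+\mathcal A*_N\mathcal A^\pi$ into mutually annihilating summands, the second of which is nilpotent, and this gives a self-contained check that does not depend on the sketched proofs of Lemma \ref{2.2} and Theorem \ref{Thm3.3}.
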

Note that Corollary \ref{DI2} eliminates one constraint above directly.

Suppose that $\mathcal Z$ is invertible. Then $\mathcal Z^D=\mathcal Z^{-1}$ and $\mathcal Z^\pi=0$. By Theorem \ref{Thm3.3}, the following corollaries can be derived.
\begin{corollary}\label{Z-11}
If $\mathcal K*_N\mathcal D^{\pi}*_N\mathcal Z^{-1}*_N\mathcal H=0$, $\ind(\mathcal A)=k$ and if

(a)~$\mathcal A^\pi*_N\mathcal C=0$, then
\begin{eqnarray*}
\mathcal S^{D}&=&-\sum_{i=0}^{k-1}(\mathcal A^D+\mathcal K*_N\mathcal Z^{-1}*_N\mathcal H)^{i+2}*_N\mathcal Y*_N\mathcal A^\pi*_N\mathcal A^i
\nonumber\\&+&\mathcal A^D+\mathcal K*_N\mathcal Z^{-1}*_N\mathcal H.
\end{eqnarray*}

(b)~$\mathcal B*_N\mathcal A^\pi=0$, then
\begin{eqnarray*}
\mathcal S^{D}&=&\sum_{i=0}^{k-1}\mathcal A^\pi*_N\mathcal A^i*_N\mathcal S*_N(\mathcal A^D+\mathcal K*_N\mathcal Z^{-1}*_N\mathcal H)^{i+2}
\\&+&\mathcal A^D+\mathcal K*_N\mathcal Z^{-1}*_N\mathcal H
\\&=&-\sum_{i=0}^{k-1}\mathcal A^\pi*_N\mathcal A^i*_N\mathcal Y*_N(\mathcal A^D+\mathcal K*_N\mathcal Z^{-1}*_N\mathcal H)^{i+2}
\\&+&\mathcal A^D+\mathcal K*_N\mathcal Z^{-1}*_N\mathcal H.
\end{eqnarray*}
\end{corollary}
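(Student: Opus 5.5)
We will obtain Corollary \ref{Z-11} as the special case of Theorem \ref{Thm3.3} in which $\mathcal Z$ is invertible, so that $\mathcal Z^D=\mathcal Z^{-1}$, $\mathcal Z^\pi=0$, and $\mathcal T=\mathcal A^D+\mathcal K*_N\mathcal Z^{-1}*_N\mathcal H$.

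The first task is to check the hypothesis of Theorem \ref{Thm3.3}. With $\mathcal Z^\pi=0$, condition (1) of Lemma \ref{2.7sae} reads $\mathcal K*_N\mathcal D^\pi*_N\mathcal Z^{-1}*_N\mathcal H=\mathcal K*_N\mathcal D^D*_N 0*_N\mathcal H=0$. This is exactly the assumption of the corollary. Hence Lemma \ref{2.7sae} gives $\mathcal S_{\mathcal A^e}*_N\mathcal T=\mathcal A*_N\mathcal A^D$ and $\mathcal S_{\mathcal A^e}^{\sharp}=\mathcal T$.

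For (a), assume $\mathcal A^\pi*_N\mathcal C=0$. Then $\mathcal A^\pi*_N\mathcal Y=\mathcal A^\pi*_N\mathcal C*_N\mathcal D^D*_N\mathcal B=0$, so the hypothesis $\mathcal S*_N\mathcal A^\pi*_N\mathcal Y=0$ of Theorem \ref{Thm3.3}(a) holds. In the resulting formula, every term containing $\mathcal A^\pi*_N\mathcal Y$ on the left disappears. The factor $\mathcal I-\mathcal A^\pi*_N\mathcal Y*_N\mathcal T$ becomes $\mathcal I$, and the term $-\mathcal A^\pi*_N\mathcal Y*_N\mathcal T^2$ vanishes. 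This leaves exactly $\mathcal T-\sum_{i=0}^{k-1}\mathcal T^{i+2}*_N\mathcal Y*_N\mathcal A^\pi*_N\mathcal A^i$. Alternatively, Corollary \ref{cor3.3}(a) gives the same formula directly.

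For (b), assume $\mathcal B*_N\mathcal A^\pi=0$. Then $\mathcal Y*_N\mathcal A^\pi=0$, so $\mathcal Y*_N\mathcal A^\pi*_N\mathcal S=0$, and Theorem \ref{Thm3.3}(b) applies. The factor $\mathcal I-\mathcal T*_N\mathcal Y*_N\mathcal A^\pi$ becomes $\mathcal I$ and $\mathcal T^2*_N\mathcal Y*_N\mathcal A^\pi$ vanishes. This yields the second displayed expression, $\mathcal T-\sum_{i=0}^{k-1}\mathcal A^\pi*_N\mathcal A^i*_N\mathcal Y*_N\mathcal T^{i+2}$.

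To pass to the first expression in (b), we need
\[
\mathcal A^\pi*_N\mathcal A^i*_N\mathcal S*_N\mathcal T^{i+2}=-\mathcal A^\pi*_N\mathcal A^i*_N\mathcal Y*_N\mathcal T^{i+2}.
\]
Since $\mathcal S=\mathcal A-\mathcal Y$, this amounts to showing $\mathcal A^\pi*_N\mathcal A^{i+1}*_N\mathcal T=0$. It is enough to show $\mathcal A^\pi*_N\mathcal A*_N\mathcal T=0$, because $\mathcal A^\pi$ commutes with $\mathcal A$. Now $\mathcal A^\pi*_N\mathcal A*_N\mathcal A^D=0$, and $\mathcal A*_N\mathcal K=\mathcal A*_N\mathcal A^D*_N\mathcal C$ lies in the range of $\mathcal A^e$, so it is also annihilated by $\mathcal A^\pi$. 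Therefore $\mathcal A^\pi*_N\mathcal A*_N\mathcal T=0$ and the two forms agree.

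The main point needing care is bookkeeping. The case $i=0$ must be included, where $\mathcal A^\pi*_N\mathcal S*_N\mathcal T^{2}=-\mathcal A^\pi*_N\mathcal Y*_N\mathcal T^2$ follows in the same way from $\mathcal A^\pi*_N\mathcal A*_N\mathcal T=0$. One should also confirm that the summation limits $0,\dots,k-1$ are those inherited from Theorem \ref{Thm3.3} with $\ind(\mathcal A)=k$.
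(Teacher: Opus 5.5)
Your proposal is correct and follows the paper's route. The paper likewise obtains this corollary by taking $\mathcal Z$ invertible in Theorem \ref{Thm3.3}, so that $\mathcal Z^\pi=0$ reduces the hypothesis to $\mathcal K*_N\mathcal D^{\pi}*_N\mathcal Z^{-1}*_N\mathcal H=0$, and then dropping the terms containing $\mathcal A^\pi*_N\mathcal Y$ in (a) or $\mathcal Y*_N\mathcal A^\pi$ in (b). Your check that $\mathcal A^\pi*_N\mathcal A*_N\mathcal T=0$, which justifies the first form of the formula in (b), supplies a step the paper leaves implicit.
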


Furthermore, the above corollary extends \cite[Corollary 2.2]{daochang10}. To present the generalization more clearly, we state the following corollary.
\begin{corollary}\label{Z-12}
If $\mathcal A^\pi*_N\mathcal C=0$, $\mathcal B*_N\mathcal A^\pi=0$ and $\mathcal C*_N\mathcal D^{\pi}*_N\mathcal Z^{-1}*_N\mathcal B=0$, then
\begin{eqnarray*}
\mathcal S^{D}&=&\mathcal A^D+\mathcal K*_N\mathcal Z^{-1}*_N\mathcal H.
\end{eqnarray*}
\end{corollary}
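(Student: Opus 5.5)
Under the hypotheses, $\mathcal S$ acts like $\mathcal A$ on the range of $\mathcal A^\pi$, so every correction term built from $\mathcal A^\pi$ vanishes. The plan is therefore to take Corollary \ref{Z-11}, which already covers the case of invertible $\mathcal Z$, and show that with both $\mathcal A^\pi*_N\mathcal C=0$ and $\mathcal B*_N\mathcal A^\pi=0$ its summation term is zero, leaving $\mathcal S^D=\mathcal T$ with $\mathcal T=\mathcal A^D+\mathcal K*_N\mathcal Z^{-1}*_N\mathcal H$.

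\textbf{Step 1: check the hypothesis of Corollary \ref{Z-11}.} Since $\mathcal Z$ is invertible, we have $\mathcal Z^D=\mathcal Z^{-1}$ and $\mathcal Z^\pi=0$. The assumption $\mathcal C*_N\mathcal D^\pi*_N\mathcal Z^{-1}*_N\mathcal B=0$ gives
$$\mathcal K*_N\mathcal D^\pi*_N\mathcal Z^{-1}*_N\mathcal H=\mathcal A^D*_N(\mathcal C*_N\mathcal D^\pi*_N\mathcal Z^{-1}*_N\mathcal B)*_N\mathcal A^D=0 .$$
This is exactly what Corollary \ref{Z-11} requires. Equivalently, the first condition of Lemma \ref{2.7sae} holds with both sides equal to zero, so Theorem \ref{Thm3.3} also applies.

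\textbf{Step 2: apply Corollary \ref{Z-11}(a).} Its hypothesis $\mathcal A^\pi*_N\mathcal C=0$ is one of ours, so
$$\mathcal S^D=\mathcal T-\sum_{i=0}^{k-1}\mathcal T^{i+2}*_N\mathcal Y*_N\mathcal A^\pi*_N\mathcal A^i .$$
Now $\mathcal Y*_N\mathcal A^\pi=\mathcal C*_N\mathcal D^D*_N(\mathcal B*_N\mathcal A^\pi)=0$, so every summand vanishes and $\mathcal S^D=\mathcal T$, which is the claimed formula. Part (b) of Corollary \ref{Z-11} works symmetrically: there the summands contain $\mathcal A^\pi*_N\mathcal A^i*_N\mathcal Y=\mathcal A^i*_N\mathcal A^\pi*_N\mathcal C*_N\mathcal D^D*_N\mathcal B=0$.

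\textbf{Main obstacle.} The argument rests on Corollary \ref{Z-11}, and through it on Theorem \ref{Thm3.3} and Lemma \ref{2.2}. As a safeguard, the result can also be checked directly. Since $\mathcal A^\pi*_N\mathcal Y=0=\mathcal Y*_N\mathcal A^\pi$, we get $\mathcal S=\mathcal S_{\mathcal A^e}+\mathcal A*_N\mathcal A^\pi$, where the two pieces annihilate each other in both orders. Lemma \ref{2.7sae} gives $\mathcal S_{\mathcal A^e}^{\#}=\mathcal T$, and $\mathcal A*_N\mathcal A^\pi$ is nilpotent. For a sum of mutually annihilating tensors the Drazin inverses add (Lemma \ref{PQ=0}), so $\mathcal S^D=\mathcal T+0=\mathcal T$.

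The only delicate point in this direct check is confirming that $\mathcal S_{\mathcal A^e}*_N\mathcal A*_N\mathcal A^\pi=0$ and $\mathcal A*_N\mathcal A^\pi*_N\mathcal S_{\mathcal A^e}=0$. Both follow from $\mathcal A^e*_N\mathcal A^\pi=\mathcal A^\pi*_N\mathcal A^e=0$.
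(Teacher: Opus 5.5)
Your proof is correct and follows the paper's route. The paper obtains Corollary \ref{Z-12} by specializing Corollary \ref{Z-11}, which is Theorem \ref{Thm3.3} with $\mathcal Z^\pi=0$: the assumption $\mathcal C*_N\mathcal D^\pi*_N\mathcal Z^{-1}*_N\mathcal B=0$ supplies the hypothesis, and $\mathcal Y*_N\mathcal A^\pi=0$ makes the sum vanish, exactly as you did. Your supplementary direct check is also valid: the decomposition $\mathcal S=\mathcal S_{\mathcal A^e}+\mathcal A*_N\mathcal A^\pi$ into mutually annihilating summands, combined with Lemma \ref{2.7sae}, confirms the formula without going through the nilpotency bookkeeping of Lemma \ref{2.2}.
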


It is noteworthy that Corollary \ref{Z-12} directly eliminates one of the conditions present in \cite[Corollary 2.2]{daochang10}.

Analogous to Lemma \ref{2.2}, the following results about the Drazin inverse of $\mathcal Z$ are established.

\begin{corollary}\label{d2.1}
If $\mathcal Z*_N\mathcal D^\pi$ is u-nilpotent and if

(a)~$\mathcal Z*_N\mathcal D^{\pi}*_N\mathcal R*_N\mathcal D^e=0$, then
\begin{eqnarray*}
\mathcal Z^{D}&=&\sum_{i=0}^{u-2}\left[(\mathcal Z_{\mathcal D^e}^{D})^{i+2}
-\mathcal D^{\pi}*_N\mathcal R*_N\mathcal (Z_{\mathcal D^e}^{D})^{i+3}\right]*_N\mathcal Z*_N\mathcal D^\pi *_N\mathcal Z^i
\nonumber\\&+&\mathcal Z_{\mathcal D^e}^{D}
-\mathcal D^\pi *_N\mathcal R*_N(\mathcal Z_{\mathcal D^e}^{D})^2,
\end{eqnarray*}

(b)~$\mathcal Z*_N\mathcal D^{e}*_N\mathcal R*_N\mathcal D^\pi=0$, then
\begin{eqnarray*}
\mathcal Z^{D}&=&\sum_{i=0}^{u-2}\mathcal Z^{i+1}*_N\mathcal D^\pi
*_N\left[(\mathcal Z_{\mathcal D^e}^{D})^{i+2}
-\mathcal D^{\pi}*_N\mathcal R*_N\mathcal (Z_{\mathcal D^e}^{D})^{i+3}\right]
\nonumber\\&+&\mathcal Z_{\mathcal D^e}^{D}
-\mathcal D^\pi *_N\mathcal R*_N(\mathcal Z_{\mathcal D^e}^{D})^2,
\end{eqnarray*}
\end{corollary}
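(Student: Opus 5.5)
The plan is to repeat the argument of Lemma \ref{2.2} with the roles changed: $\mathcal A$ becomes $\mathcal D$, $\mathcal Y$ becomes $\mathcal R$, and $\mathcal S$ becomes $\mathcal Z$. The key identity is that $\mathcal Z=\mathcal D-\mathcal R$ has the same structure as $\mathcal S=\mathcal A-\mathcal Y$. Write $\mathcal D^e=\mathcal D*_N\mathcal D^D$, so $\mathcal I=\mathcal D^e+\mathcal D^\pi$.

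Since $\mathcal D^e*_N\mathcal D*_N\mathcal D^\pi=0$ and $\mathcal D^\pi*_N\mathcal D*_N\mathcal D^e=0$, we have
\[
\mathcal Z=\mathcal Z_{\mathcal D^e}+\mathcal D^\pi*_N\mathcal Z*_N\mathcal D^e+\mathcal Z*_N\mathcal D^\pi,
\qquad
\mathcal D^\pi*_N\mathcal Z*_N\mathcal D^e=-\mathcal D^\pi*_N\mathcal R*_N\mathcal D^e .
\]

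For (a), set
\[
\mathcal P=\mathcal Z*_N\mathcal D^\pi,\qquad \mathcal Q=\mathcal Z_{\mathcal D^e}-\mathcal D^\pi*_N\mathcal R*_N\mathcal D^e .
\]
Because $\mathcal D^\pi*_N\mathcal D^e=0$, we get $\mathcal P*_N\mathcal Q=-\mathcal Z*_N\mathcal D^\pi*_N\mathcal R*_N\mathcal D^e=0$ by hypothesis. $\mathcal P$ is $u$-nilpotent, so $\mathcal P^D=0$ and $\mathcal P^\pi=\mathcal I$. Lemma \ref{PQ=0} then gives
\[
\mathcal Z^D=\sum_{i=0}^{u-1}(\mathcal Q^D)^{i+1}*_N\mathcal P^i .
\]
Note that $(\mathcal Z*_N\mathcal D^\pi)^n=\mathcal Z*_N\mathcal D^\pi*_N\mathcal Z^{n-1}$ for $n\ge1$, since the inner factors $\mathcal D^\pi*_N\mathcal Z*_N\mathcal D^\pi$ can be replaced by $\mathcal D^\pi*_N\mathcal Z$ when followed by $\mathcal D^\pi$. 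More precisely, $\mathcal Z*_N\mathcal D^\pi*_N\mathcal Z*_N\mathcal D^\pi$ is compared with $\mathcal Z*_N\mathcal D^\pi*_N\mathcal Z$. Here I will mirror the manipulation in Lemma \ref{2.2}; the difference term involves $\mathcal Z*_N\mathcal D^\pi*_N\mathcal Z*_N\mathcal D^e=-\mathcal Z*_N\mathcal D^\pi*_N\mathcal R*_N\mathcal D^e$, which vanishes by (a).

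To compute $\mathcal Q^D$, apply Lemma \ref{PQ=0} again with the pair $\mathcal Z_{\mathcal D^e}$ and $-\mathcal D^\pi*_N\mathcal R*_N\mathcal D^e$. Their product in this order is zero because $\mathcal D^e*_N\mathcal D^\pi=0$. The second tensor squares to zero. This yields
\[
\mathcal Q^D=\mathcal Z_{\mathcal D^e}^D-\mathcal D^\pi*_N\mathcal R*_N(\mathcal Z_{\mathcal D^e}^D)^2 .
\]
Here we use that $\mathcal Z_{\mathcal D^e}^D=\mathcal D^e*_N\mathcal Z_{\mathcal D^e}^D=\mathcal Z_{\mathcal D^e}^D*_N\mathcal D^e$, which kills cross terms. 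By induction,
\[
(\mathcal Q^D)^n=(\mathcal Z_{\mathcal D^e}^D)^n-\mathcal D^\pi*_N\mathcal R*_N(\mathcal Z_{\mathcal D^e}^D)^{n+1}.
\]
This uses $\mathcal Z_{\mathcal D^e}^D*_N\mathcal D^\pi=0$. Substituting, splitting off the $i=0$ term, and reindexing $i\to i+1$ gives exactly the stated formula, with the factor $\mathcal Z*_N\mathcal D^\pi*_N\mathcal Z^i$.

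For (b), use the decomposition
\[
\mathcal Z=\bigl(\mathcal Z_{\mathcal D^e}-\mathcal D^\pi*_N\mathcal R*_N\mathcal D^e\bigr)+\mathcal Z*_N\mathcal D^\pi ,
\]
now with $\mathcal P$ the bracketed part and $\mathcal Q=\mathcal Z*_N\mathcal D^\pi$ nilpotent. The orthogonality $\mathcal P*_N\mathcal Q=0$ reduces to $\mathcal D^e*_N\mathcal Z*_N\mathcal D^\pi=-\mathcal D^e*_N\mathcal R*_N\mathcal D^\pi$ being annihilated appropriately. The condition $\mathcal Z*_N\mathcal D^e*_N\mathcal R*_N\mathcal D^\pi=0$ then yields $(\mathcal Z*_N\mathcal D^\pi)^n=\mathcal Z^n*_N\mathcal D^\pi$. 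Lemma \ref{PQ=0} gives $\sum_{i}\mathcal Q^i*_N(\mathcal P^D)^{i+1}$, and the same power formula for $\mathcal P^D$ produces the stated expression, exactly as in the derivation of \eqref{simplify1}.

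The main obstacle is bookkeeping the nilpotent-power identities $(\mathcal Z*_N\mathcal D^\pi)^n$ and checking that the hypothesis really gives $\mathcal P*_N\mathcal Q=0$ in the chosen order for (b). If the order fails there, I would swap the roles of $\mathcal P$ and $\mathcal Q$ and use the other branch of Lemma \ref{PQ=0}, just as the paper does in Lemma \ref{2.2}(b).
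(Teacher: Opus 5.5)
Your proposal is correct and takes the same route as the paper, which justifies this corollary only by the remark ``analogous to Lemma~\ref{2.2}''. That means the substitution $\mathcal A\to\mathcal D$, $\mathcal Y\to\mathcal R$, $\mathcal S\to\mathcal Z$, the splitting $\mathcal Z=\mathcal Z*_N\mathcal D^\pi+(\mathcal Z_{\mathcal D^e}-\mathcal D^\pi*_N\mathcal R*_N\mathcal D^e)$ (with the roles of the two summands swapped between (a) and (b)), Lemma~\ref{PQ=0} applied twice, and stopping at the unsimplified expressions that appear in that proof (cf.\ \eqref{simplify1}). Your hedge in (b) is unnecessary: the bracketed part equals $\mathcal Z*_N\mathcal D^e$, and $\mathcal Z*_N\mathcal D^e*_N\mathcal Z*_N\mathcal D^\pi=-\mathcal Z*_N\mathcal D^e*_N\mathcal R*_N\mathcal D^\pi=0$, so the order you chose is already the right one.
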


Clearly, we observe that Corollary \ref{d2.1} generalizes \cite[Lemma 2.9$(a)$]{Dijanaf} and \cite[Lemma 2.9$(b)$]{Dijanaf}, respectively.
\begin{corollary}\label{d2.2}
If $\mathcal D^\pi*_N\mathcal Z$ is v-nilpotent and if

(a)~$\mathcal D^e*_N\mathcal R*_N\mathcal D^{\pi}*_N\mathcal Z=0$, then
\begin{eqnarray*}
\mathcal Z^{D}&=&\sum_{i=0}^{v-2}\mathcal Z^i*_N\mathcal D^\pi*_N\mathcal Z*_N\left[(\mathcal Z_{\mathcal D^e}^{D})^{i+2}
-(\mathcal Z_{\mathcal D^e}^{D})^{i+3}*_N\mathcal R*_N\mathcal D^\pi\right]
\nonumber\\&+&\mathcal Z_{\mathcal D^e}^{D}
-(\mathcal Z_{\mathcal D^e}^{D})^2*_N\mathcal R*_N\mathcal D^\pi,
\end{eqnarray*}

(b)~$\mathcal D^\pi*_N\mathcal R*_N\mathcal D^{e}*_N\mathcal Z=0$, then
\begin{eqnarray*}
\mathcal Z^{D}&=&\sum_{i=0}^{v-2}
\left[(\mathcal Z_{\mathcal D^e}^{D})^{i+2}
-(\mathcal Z_{\mathcal D^e}^{D})^{i+3}*_N\mathcal R*_N\mathcal D^\pi\right]*_N\mathcal D^\pi*_N\mathcal Z^{i+1}
\nonumber\\&+&\mathcal Z_{\mathcal D^e}^{D}
-(\mathcal Z_{\mathcal D^e}^{D})^2*_N\mathcal R*_N\mathcal D^\pi,
\end{eqnarray*}
\end{corollary}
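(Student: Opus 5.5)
The plan is to mirror the proof of Lemma \ref{sd24}, replacing $\mathcal A,\mathcal S,\mathcal Y$ by $\mathcal D,\mathcal Z,\mathcal R$. This substitution is legitimate because $\mathcal Z=\mathcal D-\mathcal R$ plays exactly the role of $\mathcal S=\mathcal A-\mathcal Y$. The key identities come from $\mathcal D^\pi*_N\mathcal D*_N\mathcal D^e=0=\mathcal D^e*_N\mathcal D*_N\mathcal D^\pi$:
$$\mathcal D^e*_N\mathcal Z*_N\mathcal D^\pi=-\mathcal D^e*_N\mathcal R*_N\mathcal D^\pi,\qquad \mathcal D^\pi*_N\mathcal Z*_N\mathcal D^e=-\mathcal D^\pi*_N\mathcal R*_N\mathcal D^e.$$
Consequently $\mathcal Z=\mathcal Z_{\mathcal D^e}-\mathcal D^e*_N\mathcal R*_N\mathcal D^\pi+\mathcal D^\pi*_N\mathcal Z$.

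For (a), I will set $\mathcal P=\mathcal Z_{\mathcal D^e}-\mathcal D^e*_N\mathcal R*_N\mathcal D^\pi$ and $\mathcal Q=\mathcal D^\pi*_N\mathcal Z$. Since $\mathcal Z_{\mathcal D^e}*_N\mathcal D^\pi=0$, we get $\mathcal P*_N\mathcal Q=-\mathcal D^e*_N\mathcal R*_N\mathcal D^\pi*_N\mathcal Z=0$ by hypothesis. As $\mathcal Q$ is $v$-nilpotent, $\mathcal Q^D=0$ and $\mathcal Q^\pi=\mathcal I$. Lemma \ref{PQ=0} then yields $\mathcal Z^D=\sum_{i=0}^{v-1}(\mathcal D^\pi*_N\mathcal Z)^i*_N(\mathcal P^D)^{i+1}$.

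To compute $\mathcal P^D$, I apply Lemma \ref{PQ=0} again, this time to the pair $(-\mathcal D^e*_N\mathcal R*_N\mathcal D^\pi,\ \mathcal Z_{\mathcal D^e})$. Their product in this order vanishes because $\mathcal D^\pi*_N\mathcal D^e=0$, and the first summand squares to zero. This gives
$$\mathcal P^D=\mathcal Z_{\mathcal D^e}^D-(\mathcal Z_{\mathcal D^e}^D)^2*_N\mathcal R*_N\mathcal D^\pi,$$
where I use $\mathcal Z^D_{\mathcal D^e}*_N\mathcal D^e=\mathcal Z^D_{\mathcal D^e}$. Because $\mathcal D^\pi*_N\mathcal Z^D_{\mathcal D^e}=0$, all cross terms in its powers collapse, so $(\mathcal P^D)^n=(\mathcal Z_{\mathcal D^e}^D)^n-(\mathcal Z_{\mathcal D^e}^D)^{n+1}*_N\mathcal R*_N\mathcal D^\pi$.

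Finally, I write $(\mathcal D^\pi*_N\mathcal Z)^i=\mathcal Z^{i-1}*_N\mathcal D^\pi*_N\mathcal Z$ for $i\ge1$, exactly as in Lemma \ref{sd24}. I then split off the $i=0$ term, shift the index by one, and arrive at the stated formula.

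For (b), I will use the decomposition $\mathcal P=\mathcal D^\pi*_N\mathcal Z$ (nilpotent) and $\mathcal Q=\mathcal Z_{\mathcal D^e}-\mathcal D^e*_N\mathcal R*_N\mathcal D^\pi$. To see that $\mathcal P*_N\mathcal Q=0$, I rewrite
$$\mathcal D^\pi*_N\mathcal Z*_N\mathcal Z_{\mathcal D^e}=-\mathcal D^\pi*_N\mathcal R*_N\mathcal D^e*_N\mathcal Z*_N(\mathcal I-\mathcal D^\pi),$$
which vanishes by hypothesis. Likewise,
$$\mathcal D^\pi*_N\mathcal Z*_N\mathcal D^e*_N\mathcal R*_N\mathcal D^\pi=-\mathcal D^\pi*_N\mathcal R*_N\mathcal D^e*_N(\mathcal D-\mathcal Z)*_N\mathcal D^\pi=\mathcal D^\pi*_N\mathcal R*_N\mathcal D^e*_N\mathcal Z*_N\mathcal D^\pi=0,$$
using $\mathcal D^e*_N\mathcal D*_N\mathcal D^\pi=0$. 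With $\mathcal P^D=0$, Lemma \ref{PQ=0} gives $\mathcal Z^D=\sum_{i=0}^{v-1}(\mathcal Q^D)^{i+1}*_N(\mathcal D^\pi*_N\mathcal Z)^i$. Here $\mathcal Q^D$ is the same tensor as $\mathcal P^D$ in part (a), so its powers are already known. I then write $(\mathcal D^\pi*_N\mathcal Z)^i=\mathcal D^\pi*_N\mathcal Z^{i}$, peel off the $i=0$ term and reindex to obtain the stated formula.

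The main obstacle is justifying the power identities for $\mathcal D^\pi*_N\mathcal Z$, which are not automatic. I will check them by expanding $\mathcal D^\pi*_N\mathcal Z*_N\mathcal D^\pi*_N\mathcal Z$ via $\mathcal D^\pi=\mathcal I-\mathcal D^e$ and showing that the leftover terms are killed when multiplied against $\mathcal Q^D$ or $\mathcal P^D$, using $\mathcal D^\pi*_N\mathcal Z^D_{\mathcal D^e}=0$ and the hypothesis. If the identities fail on the nose, I will keep the form $(\mathcal D^\pi*_N\mathcal Z)^i$ and verify that they agree with the stated expressions after multiplication by the $\mathcal Z_{\mathcal D^e}^D$-factors.
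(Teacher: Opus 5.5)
Your proposal is correct and follows the paper's own route: the paper gets this corollary by analogy with the proof of Lemma~\ref{sd24}, replacing $\mathcal A,\mathcal S,\mathcal Y$ by $\mathcal D,\mathcal Z,\mathcal R$, with the same two splittings, Lemma~\ref{PQ=0} applied twice (the second time giving $(\mathcal Z_{\mathcal D^e}-\mathcal D^e*_N\mathcal R*_N\mathcal D^\pi)^D=\mathcal Z_{\mathcal D^e}^D-(\mathcal Z_{\mathcal D^e}^D)^2*_N\mathcal R*_N\mathcal D^\pi$), and the same reindexing. The power identities you single out as the main obstacle hold exactly, so no fallback is needed: $\mathcal D^\pi*_N\mathcal Z*_N\mathcal D^\pi*_N\mathcal Z=\mathcal Z*_N\mathcal D^\pi*_N\mathcal Z+\mathcal D^e*_N\mathcal R*_N\mathcal D^\pi*_N\mathcal Z=\mathcal D^\pi*_N\mathcal Z^2+\mathcal D^\pi*_N\mathcal R*_N\mathcal D^e*_N\mathcal Z$, so hypothesis (a), resp.\ (b), kills the extra term, and induction gives $(\mathcal D^\pi*_N\mathcal Z)^i=\mathcal Z^{i-1}*_N\mathcal D^\pi*_N\mathcal Z$, resp.\ $\mathcal D^\pi*_N\mathcal Z^i$, for $i\geq 1$.
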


Meanwhile, notice that Corollary \ref{d2.2} extends \cite[Lemma 2.9$(c)$]{Dijanaf} and \cite[Lemma 2.9$(d)$]{Dijanaf}, respectively. Furthermore, we refine and extend  \cite[Lemma 2.9 $(a)(c)$]{Dijanaf} by removing one of its conditions and proposing the following corollaries.
\begin{corollary}\label{d2.3}
If $\mathcal Z*_N\mathcal D^\pi*_N\mathcal R=0$ and $\ind(\mathcal D)=l$, then
\begin{eqnarray*}
\mathcal Z^{D}&=&\sum_{i=0}^{l-1}\left[(\mathcal Z_{\mathcal D^e}^{D})^{i+2}
-\mathcal D^{\pi}*_N\mathcal R*_N\mathcal (Z_{\mathcal D^e}^{D})^{i+3}\right]*_N\mathcal Z*_N\mathcal D^\pi *_N\mathcal D^i
\nonumber\\&+&\mathcal Z_{\mathcal D^e}^{D}
-\mathcal D^\pi *_N\mathcal R*_N(\mathcal Z_{\mathcal D^e}^{D})^2.
\end{eqnarray*}
\end{corollary}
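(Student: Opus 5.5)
We obtain Corollary \ref{d2.3} by running the argument of Lemma \ref{2.2}(a) with $\mathcal A,\mathcal S,\mathcal Y$ replaced by $\mathcal D,\mathcal Z,\mathcal R$. The point to exploit is that $\mathcal Z=\mathcal D-\mathcal R$, so $\mathcal Z$ plays the role of the modified tensor of $\mathcal D$ with perturbation $\mathcal R$. The stronger hypothesis $\mathcal Z*_N\mathcal D^\pi*_N\mathcal R=0$ should supply both the nilpotency of $\mathcal Z*_N\mathcal D^\pi$ and the orthogonality condition of part (a) for free. This is exactly how Corollary \ref{1.1} relates to Lemma \ref{2.2}.

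\textbf{Step 1: the splitting.} Write $\mathcal D^e=\mathcal D*_N\mathcal D^D$ and
$$\mathcal Z=\mathcal Z*_N\mathcal D^\pi+\bigl(\mathcal Z_{\mathcal D^e}-\mathcal D^\pi*_N\mathcal R*_N\mathcal D^e\bigr):=\mathcal P+\mathcal Q.$$
This identity holds because $\mathcal D^\pi*_N\mathcal Z*_N\mathcal D^e=\mathcal D^\pi*_N(\mathcal D-\mathcal R)*_N\mathcal D^e=-\mathcal D^\pi*_N\mathcal R*_N\mathcal D^e$, using $\mathcal D^\pi*_N\mathcal D*_N\mathcal D^e=0$.

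\textbf{Step 2: the two conditions.} The hypothesis $\mathcal Z*_N\mathcal D^\pi*_N\mathcal R=0$ gives $\mathcal Z*_N\mathcal D^\pi*_N\mathcal R*_N\mathcal D^e=0$, and hence $\mathcal P*_N\mathcal Q=0$, since $\mathcal D^\pi*_N\mathcal Z_{\mathcal D^e}=0$. For nilpotency, note that
$$\mathcal Z*_N\mathcal D^\pi*_N\mathcal Z*_N\mathcal D^\pi=\mathcal Z*_N\mathcal D^\pi*_N(\mathcal D-\mathcal R)*_N\mathcal D^\pi=\mathcal Z*_N\mathcal D^\pi*_N\mathcal D*_N\mathcal D^\pi.$$
Inductively this yields $(\mathcal Z*_N\mathcal D^\pi)^n=\mathcal Z*_N\mathcal D^\pi*_N\mathcal D^{n-1}$ for $n\ge1$. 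Since $\mathcal D^\pi*_N\mathcal D^{l}=0$ when $\ind(\mathcal D)=l$, the tensor $\mathcal Z*_N\mathcal D^\pi$ is $(l+1)$-nilpotent.

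\textbf{Step 3: apply Lemma \ref{PQ=0}.} As in \eqref{sd13}, with $\mathcal P^D=0$ and $\mathcal P^\pi=\mathcal I$, we get
$$\mathcal Z^D=\sum_{i=0}^{l}(\mathcal Q^D)^{i+1}*_N(\mathcal Z*_N\mathcal D^\pi)^i .$$
We compute $\mathcal Q^D$ exactly as in Lemma \ref{2.2}. The term $\mathcal D^\pi*_N\mathcal R*_N\mathcal D^e$ is square-zero and is annihilated on the left by $\mathcal Z_{\mathcal D^e}$, so
$$\mathcal Q^D=\mathcal Z_{\mathcal D^e}^D-\mathcal D^\pi*_N\mathcal R*_N(\mathcal Z_{\mathcal D^e}^D)^2.$$
Its $n$-th power is $(\mathcal Z_{\mathcal D^e}^D)^n-\mathcal D^\pi*_N\mathcal R*_N(\mathcal Z_{\mathcal D^e}^D)^{n+1}$; this uses $\mathcal Z_{\mathcal D^e}^D*_N\mathcal D^\pi=0$, which holds because $\mathcal Z_{\mathcal D^e}^D$ lies in the corner algebra of $\mathcal D^e$.

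\textbf{Step 4: assemble.} The $i=0$ term gives $\mathcal Z_{\mathcal D^e}^D-\mathcal D^\pi*_N\mathcal R*_N(\mathcal Z_{\mathcal D^e}^D)^2$. For $i\ge1$, substitute $(\mathcal Z*_N\mathcal D^\pi)^{i}=\mathcal Z*_N\mathcal D^\pi*_N\mathcal D^{i-1}$ and reindex $i\mapsto i+1$. This produces the sum $\sum_{i=0}^{l-1}[\cdots]*_N\mathcal Z*_N\mathcal D^\pi*_N\mathcal D^i$ with exponents $i+2$ and $i+3$, as stated.

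The main obstacle is the bookkeeping of indices in Steps 2 and 4. The nilpotency index $l+1$ must match the range of the sum, and it is the identity $(\mathcal Z*_N\mathcal D^\pi)^n=\mathcal Z*_N\mathcal D^\pi*_N\mathcal D^{n-1}$ that makes the powers of $\mathcal Z$ from Corollary \ref{d2.1}(a) become powers of $\mathcal D$ here. In Step 3 we also need $\ind(\mathcal P)$ and $\ind(\mathcal Q)$ to be small enough that the upper limit $k-1$ in Lemma \ref{PQ=0} covers all nonzero terms. Since $\mathcal Q^D$ is a polynomial-type expression in the nilpotent pieces, any $k\ge l+1$ works, and terms with $i\ge l+1$ vanish.
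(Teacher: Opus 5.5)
Your proof is correct and follows essentially the paper's route. The paper obtains this corollary the way Corollary \ref{1.1} comes from Lemma \ref{2.2}(a): it uses the splitting $\mathcal Z=\mathcal Z*_N\mathcal D^\pi+(\mathcal Z_{\mathcal D^e}-\mathcal D^\pi*_N\mathcal R*_N\mathcal D^e)$ and applies Lemma \ref{PQ=0} twice, while the hypothesis $\mathcal Z*_N\mathcal D^\pi*_N\mathcal R=0$ gives $(\mathcal Z*_N\mathcal D^\pi)^n=\mathcal Z*_N\mathcal D^\pi*_N\mathcal D^{n-1}$ and hence nilpotency index $l+1$, exactly as in your Steps 1--4. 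Only your closing remark is muddled: truncating the sum at $i=l$ needs no smallness of $\ind(\mathcal Q)$, because Lemma \ref{PQ=0} takes $k\geq\ind(\mathcal P)$ and every term with $i\geq\ind(\mathcal P)$ contains $\mathcal P^i=0$.
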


\begin{corollary}\label{d2.4}
If $\mathcal R*_N\mathcal D^\pi*_N\mathcal Z=0$ and $\ind(\mathcal D)=l$, then
\begin{eqnarray*}
\mathcal Z^{D}&=&\sum_{i=0}^{l-1}D^i*_N\mathcal D^\pi*_N\mathcal Z*_N\left[(\mathcal Z_{\mathcal D^e}^{D})^{i+2}
-(\mathcal Z_{\mathcal D^e}^{D})^{i+3}*_N\mathcal R*_N\mathcal D^\pi\right]
\nonumber\\&+&\mathcal Z_{\mathcal D^e}^{D}
-(\mathcal Z_{\mathcal D^e}^{D})^2*_N\mathcal R*_N\mathcal D^\pi.
\end{eqnarray*}
\end{corollary}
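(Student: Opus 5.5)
We prove Corollary \ref{d2.4} as the $\mathcal D$-analogue of Corollary \ref{2.1}. The roles are exchanged as follows: $\mathcal A\leftrightarrow\mathcal D$, $\mathcal S\leftrightarrow\mathcal Z$ and $\mathcal Y\leftrightarrow\mathcal R$. The identity $\mathcal S=\mathcal A-\mathcal Y$ becomes $\mathcal Z=\mathcal D-\mathcal R$.

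\textbf{Step 1: splitting.} Write $\mathcal Z=\mathcal P+\mathcal Q$ with
$$\mathcal P=\mathcal Z_{\mathcal D^e}-\mathcal D^e*_N\mathcal R*_N\mathcal D^\pi,\qquad \mathcal Q=\mathcal D^\pi*_N\mathcal Z.$$
Since $\mathcal D^e+\mathcal D^\pi=\mathcal I$, we have $\mathcal Z=\mathcal D^e*_N\mathcal Z*_N\mathcal D^e+\mathcal D^e*_N\mathcal Z*_N\mathcal D^\pi+\mathcal D^\pi*_N\mathcal Z$. Moreover $\mathcal D^e*_N\mathcal Z*_N\mathcal D^\pi=-\mathcal D^e*_N\mathcal R*_N\mathcal D^\pi$, because $\mathcal D$ commutes with $\mathcal D^e$ and $\mathcal D^e*_N\mathcal D^\pi=0$. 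This confirms the splitting.

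\textbf{Step 2: $\mathcal P*_N\mathcal Q=0$.} Using $\mathcal D^e*_N\mathcal D^\pi=0$,
$$\mathcal P*_N\mathcal Q=-\mathcal D^e*_N\mathcal R*_N\mathcal D^\pi*_N\mathcal Z,$$
which vanishes by the hypothesis $\mathcal R*_N\mathcal D^\pi*_N\mathcal Z=0$. This is the point where the hypothesis is used; it replaces the weaker condition (a) of Corollary \ref{d2.2}.

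\textbf{Step 3: nilpotency of $\mathcal Q$ and its powers.} We need that $\mathcal Q$ is nilpotent and an explicit form of $\mathcal Q^n$. Inductively, $(\mathcal D^\pi*_N\mathcal Z)^n=\mathcal D^\pi*_N\mathcal Z^n$ should hold. The reason is that $\mathcal D^\pi*_N\mathcal Z*_N\mathcal D^\pi*_N\mathcal Z=\mathcal D^\pi*_N\mathcal Z*_N\mathcal Z-\mathcal D^\pi*_N\mathcal Z*_N\mathcal D^e*_N\mathcal Z$. Here $\mathcal D^\pi*_N\mathcal Z*_N\mathcal D^e=-\mathcal D^\pi*_N\mathcal R*_N\mathcal D^e$, which need not vanish. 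So instead expand $\mathcal Q=\mathcal D^\pi*_N\mathcal D-\mathcal D^\pi*_N\mathcal R$. Since $\mathcal R*_N\mathcal D^\pi*_N\mathcal Z=0$, the factor $\mathcal D^\pi*_N\mathcal R$ annihilates the next copy of $\mathcal D^\pi*_N\mathcal Z$ on its right. This gives
$$\mathcal Q^n=\mathcal D^n*_N\mathcal D^\pi\cdot(\text{stuff})-\mathcal D^{n-1}*_N\mathcal D^\pi*_N\mathcal R*_N(\cdots),$$
more precisely $\mathcal Q^{n}=\mathcal D^{n-1}*_N\mathcal D^\pi*_N\mathcal Z$. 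Since $\mathcal D^{l}*_N\mathcal D^\pi=0$, it follows that $\mathcal Q^{l+1}=0$. Hence $\mathcal Q^D=0$ and $\mathcal Q^\pi=\mathcal I$. Verifying this power formula is the main obstacle, and I would check it carefully by induction.

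\textbf{Step 4: applying Lemma \ref{PQ=0}.} Lemma \ref{PQ=0} gives
$$\mathcal Z^D=\sum_i\mathcal Q^i*_N(\mathcal P^D)^{i+1}.$$
Now compute $\mathcal P^D$. One checks $\mathcal D^e*_N\mathcal R*_N\mathcal D^\pi*_N\mathcal Z_{\mathcal D^e}=0$ and that this term squares to zero. Applying Lemma \ref{PQ=0} once more, exactly as in Lemma \ref{sd24}, gives
$$\mathcal P^D=\mathcal Z_{\mathcal D^e}^D-(\mathcal Z_{\mathcal D^e}^D)^2*_N\mathcal R*_N\mathcal D^\pi.$$
Its $n$-th power is
$$(\mathcal P^D)^n=(\mathcal Z_{\mathcal D^e}^D)^n-(\mathcal Z_{\mathcal D^e}^D)^{n+1}*_N\mathcal R*_N\mathcal D^\pi.$$
Substitute this and $\mathcal Q^{i+1}=\mathcal D^{i}*_N\mathcal D^\pi*_N\mathcal Z$ into the sum. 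Separate the $i=0$ term and reindex; the sum truncates at $i=l-1$ because $\mathcal D^{l}*_N\mathcal D^\pi=0$. The result is the stated formula.
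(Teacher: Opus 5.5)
Your proof is correct and is essentially the argument the paper intends for this corollary: it is the $\mathcal D$/$\mathcal Z$ analogue of Lemma \ref{sd24}(a) and Corollary \ref{2.1}, using the same splitting $\mathcal Z=(\mathcal Z_{\mathcal D^e}-\mathcal D^e*_N\mathcal R*_N\mathcal D^\pi)+\mathcal D^\pi*_N\mathcal Z$, two applications of Lemma \ref{PQ=0}, and a power identity for $\mathcal Q=\mathcal D^\pi*_N\mathcal Z$ in place of a nilpotency hypothesis. The step you flag as the main obstacle is a one-line induction: your observation gives $\mathcal Q^2=\mathcal D^\pi*_N(\mathcal D-\mathcal R)*_N\mathcal D^\pi*_N\mathcal Z=\mathcal D*_N\mathcal Q$, so $\mathcal Q^{n}=\mathcal Q^{n-1}*_N\mathcal Q=\mathcal D^{n-2}*_N\mathcal Q^2=\mathcal D^{n-1}*_N\mathcal Q$; peel off the last factor, because the hypothesis only removes $\mathcal R$ when it stands directly before $\mathcal D^\pi*_N\mathcal Z$.
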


\section{Numerical example}
We now present the following example to illustrate the main theorem.
\begin{example}{\label{exp0.1}}
    Let $\mathcal{A}\in\mathbb{R}^{(2\times 3)\times (2\times 3)}$  be a tensor defined as follows:
\[\mathcal{A}(:,:,1,1) = \begin{bmatrix} \ 1 & 0 & 0\ \\ \ 0 & 0 & 0\ \end{bmatrix}, \quad \mathcal{A}(:,:,2,1) = \begin{bmatrix} \ 0 & 0 & 0\ \\ \ 1 & 0 & 0\ \end{bmatrix}, \quad \mathcal{A}(:,:,1,2) = \begin{bmatrix} \ 0 & 1 & 0\ \\ \ 0 & 0 & 0\ \end{bmatrix},\]
     \[\mathcal{A}(:,:,2,2) = \begin{bmatrix} \ 0 & 0 & 0\ \\ \ 0 & 1 & 0\ \end{bmatrix}, \quad \mathcal{A}(:,:,1,3) = \begin{bmatrix} \ 0 & 0 & 1\ \\ \ 1 & 0 & 0\ \end{bmatrix}, \quad \mathcal{A}(:,:,2,3) = \begin{bmatrix} \ 0 & 0 & 0\ \\ \ 0 & 0 & 0\ \end{bmatrix},\]

     $\mathcal{B}\in\mathbb{R}^{(3\times 2)\times (2\times 3)}$  with entries given by
\[\mathcal{B}(:,:,1,1) = \begin{bmatrix} \ 0 & 0\ \\ \ 0 & 0\ \\ \ 0 & 0\ \end{bmatrix}, \quad \mathcal{B}(:,:,2,1) = \begin{bmatrix} \ 0 & 0\ \\ \ 1/4 & 0\ \\ \ 0 & 0\ \end{bmatrix}, \quad \mathcal{B}(:,:,1,2) = \begin{bmatrix} \ 1/4 & 0\ \\ \ 0 & 0\ \\ \ 0 & 0\ \end{bmatrix},\]
     \[\mathcal{B}(:,:,2,2) = \begin{bmatrix} \ 0 & 0\ \\ \ 0 & 1/4\ \\ \ 0 & 0\ \end{bmatrix}, \quad \mathcal{B}(:,:,1,3) = \begin{bmatrix} \ 0 & 0\ \\ \ 0 & 0\ \\ \ 0 & 0\ \end{bmatrix}, \quad \mathcal{B}(:,:,2,3) = \begin{bmatrix} \ 0 & 0\ \\ \ 0 & 0\ \\ \ 0 & 0\ \end{bmatrix},\]

       $\mathcal{C}\in\mathbb{R}^{(2\times 3)\times (3\times 2)}$ whose entries are
\[\mathcal{C}(:,:,1,1) = \begin{bmatrix} \ 1/4 & 0 & 1/4\ \\ \ 0 & 0 & 0\ \end{bmatrix}, \quad \mathcal{C}(:,:,2,1) = \begin{bmatrix} \ 0 & 1/4 & 0\ \\ \ 0 & 0 & 0\ \end{bmatrix}, \quad \mathcal{C}(:,:,3,1) = \begin{bmatrix} \ 0 & 0 & 0\ \\ \ 0 & 0 & 0\ \end{bmatrix},\]
     \[\mathcal{C}(:,:,1,2) = \begin{bmatrix} \ 0 & 0 & 0\ \\ \ 0 & 1/4 & 0\ \end{bmatrix}, \quad \mathcal{C}(:,:,2,2) = \begin{bmatrix} \ 0 & 0 & 0\ \\ \ 1/4 & 0 & 1/4\ \end{bmatrix}, \quad \mathcal{C}(:,:,3,2) = \begin{bmatrix} \ 1/4 & 1/4 & 1/4\ \\ \ 1/4 & 0 & 1/4\ \end{bmatrix},\]

   and $\mathcal{D}\in\mathbb{R}^{(3\times 2)\times (3\times 2)}$ be defined by
 \[\mathcal{D}(:,:,1,1) = \begin{bmatrix} \ 1/2 & 0\ \\ \ 0 & 0\ \\ \ 0 & 0\ \end{bmatrix}, \quad \mathcal{D}(:,:,2,1) = \begin{bmatrix} \ 0 & 0\ \\ \ 1/2 & 0\ \\ \ 0 & 0\ \end{bmatrix}, \quad \mathcal{D}(:,:,3,1) = \begin{bmatrix} \ 0 & 0\ \\ \ 0 & 0\ \\ \ 1/2 & 0\ \end{bmatrix},\]
     \[\mathcal{D}(:,:,1,2) = \begin{bmatrix} \ 0 & 1/2\ \\ \ 1/2 & 0\ \\ \ 0 & 0\ \end{bmatrix}, \quad \mathcal{D}(:,:,2,2) = \begin{bmatrix} \ 0 & 0\ \\ \ 0 & 1/2\ \\ \ 0 & 1/2\ \end{bmatrix}, \quad \mathcal{D}(:,:,3,2) = \begin{bmatrix} \ 0 & 0\ \\ \ 1/2 & 0\ \\ \ 0 & 1/2\ \end{bmatrix}.\]

The following results can be verified through straightforward computation.
\[\mathcal{A}^D(:,:,1,1) = \begin{bmatrix} \ 1 & 0 & 0\ \\ \ 0 & 0 & 0\ \end{bmatrix}, \quad \mathcal{A}^D(:,:,2,1) = \begin{bmatrix} \ 0 & 0 & 0\ \\ \ 1 & 0 & 0\ \end{bmatrix}, \quad \mathcal{A}^D(:,:,1,2) = \begin{bmatrix} \ 0 & 1 & 0\ \\ \ 0 & 0 & 0\ \end{bmatrix},\]
     \[\mathcal{A}^D(:,:,2,2) = \begin{bmatrix} \ 0 & 0 & 0\ \\ \ 0 & 1 & 0\ \end{bmatrix}, \quad \mathcal{A}^D(:,:,1,3) = \begin{bmatrix} \ 0 & 0 & 1\ \\ \ -1 & 0 & 0\ \end{bmatrix}, \quad \mathcal{A}^D(:,:,2,3) = \begin{bmatrix} \ 0 & 0 & 0\ \\ \ 0 & 0 & 0\ \end{bmatrix},\]
\[\mathcal{D}^D(:,:,1,1) = \begin{bmatrix} \ 2 & 0\ \\ \ 0 & 0\ \\ \ 0 & 0\ \end{bmatrix}, \quad \mathcal{D}^D(:,:,2,1) = \begin{bmatrix} \ 0 & 0\ \\ \ 2 & 0\ \\ \ 0 & 0\ \end{bmatrix}, \quad \mathcal{D}^D(:,:,3,1) = \begin{bmatrix} \ 0 & 0\ \\ \ 0 & 0\ \\ \ 2 & 0\ \end{bmatrix},\]
     \[\mathcal{D}^D(:,:,1,2) = \begin{bmatrix} \ 0 & 2\ \\ \ -2 & 0\ \\ \ 0 & 0\ \end{bmatrix}, \quad \mathcal{D}^D(:,:,2,2) = \begin{bmatrix} \ 0 & 0\ \\ \ 2 & 2\ \\ \ 0 & -2\ \end{bmatrix}, \quad \mathcal{D}^D(:,:,3,2) = \begin{bmatrix} \ 0 & 0\ \\ \ 2 & 0\ \\ \ 0 & 2\ \end{bmatrix},\] and
\[\mathcal{Z}^D(:,:,1,1) = \begin{bmatrix} \ 128/65 & 0\ \\ \ -16/65 & 0\ \\ \ 0 & 0\ \end{bmatrix}, \quad \mathcal{Z}^D(:,:,2,1) = \begin{bmatrix} \ 16/65 & 0\ \\ \ 128/65 & 0\ \\ \ 0 & 0\ \end{bmatrix}, \quad \mathcal{Z}^D(:,:,3,1) = \begin{bmatrix} \ 0 & 0\ \\ \ 0 & 0\ \\ \ 2 & 0\ \end{bmatrix},\]
     \[\mathcal{Z}^D(:,:,1,2) = \begin{bmatrix} \ -63/260 & 2\ \\ \ -439/260 & 1/4\ \\ \ 0 & -1/4\ \end{bmatrix}, \quad \mathcal{Z}^D(:,:,2,2) = \begin{bmatrix} \ 2/65 & 0\ \\ \ 146/65 & 2\ \\ \ 0 & -2\ \end{bmatrix}, \quad \mathcal{Z}^D(:,:,3,2) = \begin{bmatrix} \ 0 & 0\ \\ \ -2 & 0\ \\ \ 0 & 2\ \end{bmatrix}.\]

It can be readily demonstrated that the above results satisfy
 $\mathcal K*_2\mathcal D^{\pi}*_2\mathcal Z^{D}*_2\mathcal H
   =\mathcal K*_2\mathcal D^{D}*_2\mathcal Z^{\pi}*_2\mathcal H$,
   $\ind(\mathcal A)=1$ and
$\mathcal S*_2\mathcal A^\pi*_2\mathcal Y=0$.
Then, by Theorem \ref{Thm3.3} $(a)$, we obtain

\[\mathcal{S}^D(:,:,1,1) = \begin{bmatrix} \ 1 & 0 & 0\ \\ \ 0 & 0 & 0\ \end{bmatrix}, \quad \mathcal{S}^D(:,:,2,1) = \begin{bmatrix} \ 1/65 & 8/65 & 1/65\ \\ \ 64/65 & 0 & 0\ \end{bmatrix}, \]
\[\mathcal{S}^D(:,:,1,2) = \begin{bmatrix} \ 8/65 & 64/65 & 8/65\ \\ \ -8/65 & 0 & 0\ \end{bmatrix},
\quad     \mathcal{S}^D(:,:,2,2) = \begin{bmatrix} \ -8/65 & 1/65 & -8/65\ \\ \ 8/65 & 1 & 0\ \end{bmatrix},\]
\[  \mathcal{S}^D(:,:,1,3) = \begin{bmatrix} \ -1/65 & -8/65 & 64/65\ \\ \ -64/65 & 0 & 0\ \end{bmatrix}, \quad \mathcal{S}^D(:,:,2,3) = \begin{bmatrix} \ 0 & 0 & 0\ \\ \ 0 & 0 & 0\ \end{bmatrix}.\]

Next, we regard $-\mathcal{Y}$ as a perturbation of $\mathcal{A}$.
It satisfies $-\mathcal{Y}=-\mathcal{A}*_2\mathcal{A}^D*_2\mathcal{Y}*_2\mathcal{A}*_2\mathcal{A}^D$ and $\Vert \mathcal{A}^D*_2\mathcal{Y}  \Vert_2=253/765<1$.
Hence, we can verify that
$\mathcal{S}^D-\mathcal{A}^D=-\mathcal{S}^D*_2\mathcal{Y}*_2\mathcal{A}^D=-\mathcal{A}^D*_2\mathcal{Y}*_2\mathcal{S}^D$. Then, we obtain
$$\Vert \mathcal{S}^D-\mathcal{A}^D \Vert_2=341/972, \quad
\Vert \mathcal{A}^D\Vert_2=2158/881,$$
and thus, the perturbation error is given by
$$\frac{\Vert \mathcal{S}^D-\mathcal{A}^D \Vert_2}{\Vert \mathcal{A}^D\Vert_2}\leq  \frac{\Vert \mathcal{A}^D*_2\mathcal{Y}  \Vert_2}{1-\Vert \mathcal{A}^D*_2\mathcal{Y}  \Vert_2}.$$

Furthermore, we modify the definition of $\mathcal{D}$ as follows:
 \[\mathcal{D}(:,:,1,1) = \begin{bmatrix} \ 1/a & 0\ \\ \ 0 & 0\ \\ \ 0 & 0\ \end{bmatrix}, \quad \mathcal{D}(:,:,2,1) = \begin{bmatrix} \ 0 & 0\ \\ \ 1/a & 0\ \\ \ 0 & 0\ \end{bmatrix}, \quad \mathcal{D}(:,:,3,1) = \begin{bmatrix} \ 0 & 0\ \\ \ 0 & 0\ \\ \ 1/a & 0\ \end{bmatrix},\]
     \[\mathcal{D}(:,:,1,2) = \begin{bmatrix} \ 0 & 1/a\ \\ \ 1/a & 0\ \\ \ 0 & 0\ \end{bmatrix}, \quad \mathcal{D}(:,:,2,2) = \begin{bmatrix} \ 0 & 0\ \\ \ 0 & 1/a\ \\ \ 0 & 1/a\ \end{bmatrix}, \quad \mathcal{D}(:,:,3,2) = \begin{bmatrix} \ 0 & 0\ \\ \ 1/a & 0\ \\ \ 0 & 1/a\ \end{bmatrix}.\]

Let $a=c\cdot \varepsilon$, where $c$ is a random number in the interval $[-1,1]$, and $\varepsilon$ is a given coefficient.
Although the Drazin inverse of $\mathcal S$ can be calculated either by Mathematica's \texttt{DrazinInverse} function or by Theorem~\ref{Thm3.3} $(a)$, the resulting numerical errors differ. Therefore, let $\mathcal X_1$ denote the Drazin inverse obtained via Theorem~\ref{Thm3.3} $(a)$, and $\mathcal X_2$ is the one obtained via the \texttt{DrazinInverse} function. Accordingly, the following residual errors are considered:
\begin{center}
$r_1=\Vert  \mathcal X*_2\mathcal S*_2\mathcal X- \mathcal X\Vert$, \
$r_2=\Vert \mathcal S*_2\mathcal X- \mathcal X*_2\mathcal S \Vert$ \
and \  $r_3=\Vert \mathcal S^k*_2\mathcal X*_2\mathcal S-\mathcal S^k \Vert$,
\end{center}
where $k=\ind(\mathcal S)$. The specific results are shown in Table \ref{tab0.1}.
\begin{table}[H]
\caption{\textbf{Example \ref{exp0.1}} by Theorem \ref{Thm3.3} $(a)$}
\label{tab0.1}
\centering
\begin{tabular}{ccccc}
\toprule
 &  $\varepsilon$  & $r_1$ & $r_2$ & $r_3$  \\
\midrule
$\mathcal X_1$ & \multirow{2}{*}{$10^{}$} & 1.1365e-16 & 6.6203e-17 & 9.5755e-17 \\
$\mathcal X_2$ & &  1.4677e-15&       5.2324e-16                                              & 1.3598e-15  \\
\midrule
$\mathcal X_1$ & \multirow{2}{*}{$10^{-1}$} &1.0925e-17& 5.5841e-18 & 9.3707e-18\\
$\mathcal  X_2$ & &2.4633e-15  &   5.8333e-16                                                  & 2.4888e-15  \\
\midrule
$\mathcal X_1$ & \multirow{2}{*}{$10^{-3}$} &1.0466e-19 & 6.5919e-20 & 9.7709e-20 \\
$\mathcal  X_2$ & & 4.4855e-15 &        7.9477e-16                                             &  4.2230e-15  \\
\midrule
$\mathcal  X_1$ & \multirow{2}{*}{$10^{-5}$} &3.1536e-22 & 1.5886e-22 &3.2852e-22  \\
$\mathcal X_2$ & & 1.0827e-15 &             4.2163e-16                                        &  1.5438e-15  \\
\bottomrule
\end{tabular}
\end{table}
By comparison, the error associated with $\mathcal X_1$ is always smaller than that of $\mathcal X_2$, regardless of the magnitude of $\varepsilon$.
Moreover, as $\varepsilon$ decreases, the calculation error of $\mathcal X_1$ also decreases. These results indicate that Theorem~\ref{Thm3.3} $(a)$  provides a more accurate method for computing the Drazin inverse.
\end{example}

%\section*{References}
%\bibliography{mybibfile}

\end{document}